\newcommand{\numberset}{\mathbb}
\newtheorem{theorem}{Theorem}[section]
\newtheorem{remark}{Remark}[section]
\newtheorem{lemma}[theorem]{Lemma}
\newtheorem{proposition}{Proposition}[section]
\newcommand{\N}{\numberset{N}}
\newcommand{\R}{\numberset{R}}
\newcommand{\M}{\numberset{M}}
\newcommand{\Pk}{\numberset{P}}
\newcommand{\Pkw}{\widehat{\numberset{P}}}
\renewcommand{\epsilon}{\varepsilon}
\renewcommand{\theta}{\vartheta}
\renewcommand{\rho}{\varrho}
\renewcommand{\phi}{\varphi}
\newcommand{\xx}{\boldsymbol{x}}
\newcommand{\xxp}{\xx^{\perp}}
\newcommand{\ee}{\boldsymbol{e}}
\newcommand{\uu}{\boldsymbol{u}}
\newcommand{\vb}{\boldsymbol{v}}
\newcommand{\ww}{\boldsymbol{w}}
\newcommand{\ff}{\boldsymbol{f}}
\newcommand{\nn}{\boldsymbol{n}}
\newcommand{\ddd}{\boldsymbol{\delta}}
\newcommand{\dd}{{\rm div}}
\newcommand{\gr}{\nabla}
\newcommand{\Gr}{\boldsymbol{\nabla}}
\newcommand{\dl}{\boldsymbol{\Delta}}
\newcommand{\VV}{\boldsymbol{V}}
\newcommand{\W}{\Sigma}
\newcommand{\Wd}{\Sigma_{\rm D}}
\newcommand{\Wz}{\Sigma_0}
\newcommand{\ZZ}{\boldsymbol{Z}}
\def\P0{{\Pi^{0, E}_k}}
\def\Pp0{{\Pi^{0, E}_{k-1}}}
\def\PP0{{\boldsymbol{\Pi}^{0, E}_{k-1}}}
\newcommand{\epseps}{\boldsymbol{\epsilon}}
\def\av{{a_V}}
\def\avh{{a_{V,h}}}
\def\avhl{{a_{V,h}^E}}
\def\cv{{c_V}}
\def\cvh{{c_{V,h}^{\rm skew}}}
\def\cvhl{{c_{V,h}^{{\rm skew}, E}}}
\def\as{{a_T}}
\def\ash{{a_{T,h}}}
\def\ashl{{a_{T,h}^E}}
\def\cs{{c_T}}
\def\csh{{c_{T,h}^{\rm skew}}}
\def\cshl{{c_{T,h}^{{\rm skew}, E}}}
\newcommand{\cvskew}{c_V^{\rm skew}}
\newcommand{\csskew}{c_T^{\rm skew}}
\def\svl{{s_{V}^E}}
\def\ssl{{s_{T}^E}}
\def\reg{s}
\def\isd{\widehat{\beta}}
\newcommand{\vbi}{\vb_{\cal I}}
\newcommand{\uui}{\uu_{\cal I}}
\newcommand{\eei}{\ee_{\cal I}}
\newcommand{\si}{\sigma_{\cal I}}
\newcommand{\WDL}{\W_h(E)}
\newcommand{\WDG}{\W_h}
\newcommand{\WDZG}{\W_{0,h}}
\newcommand{\WDDG}{\W_{{\rm D}, h}}
\newcommand{\VCG}{\VV}
\newcommand{\VDL}{\VV_h(E)}
\newcommand{\VDG}{\VV_h}
\newcommand{\ZCG}{\ZZ}
\newcommand{\ZDG}{\ZZ_h}
\newcommand{\QCG}{Q}
\newcommand{\QDG}{Q_h}
\newcommand{\td}{\theta_{\rm D}}
\newcommand{\tdh}{\theta_{\rm D, h}}
\newcommand{\tinth}{\theta^o_h}
\newcommand{\tparh}{\theta^{\partial}_h}
\newcommand{\clip}{\nu_{\rm lip}}
\newcommand{\cclip}{\widehat{C}_{\rm lip}}
\newcommand{\ccon}{C_{\rm conv}}
\newcommand{\cest}{{C}_{\rm est}}
\newcommand{\cesth}{\widehat{C}_{\rm est}}
\newcommand{\cdata}{{C}_{\rm data}}
\newcommand{\cdatah}{\widehat{C}_{\rm data}}
\newcommand{\csol}{{C}_{\rm sol}}
\newcommand{\csolh}{\widehat{C}_{\rm sol}}
\newcommand{\cexi}{C_{\rm exi}}
\newcommand{\cconh}{\widehat{C}_{\rm conv}}
\newcommand{\avs}{\alpha_*}
\newcommand{\avS}{\alpha^*}
\newcommand{\ats}{\beta_*}
\newcommand{\atS}{\beta^*}
\newcommand{\cfun}[1]{\textnormal{\texttt{\detokenize{#1}}}}
\newcommand{\cfan}[1]{\textnormal{\textbf{\detokenize{#1}}}}
\def\keywords{\vspace{.5em}
{\textit{Keywords}:\,\relax%
}}
\author[1]{P. F. Antonietti \thanks{paola.antonietti@polimi.it}}
\author[2]{G. Vacca \thanks{giuseppe.vacca@uniba.it}}
\author[1]{M. Verani \thanks{marco.verani@polimi.it}}
\affil[1]{MOX-Laboratory for Modeling and Scientific Computing,  Dipartimento di Matematica, Politecnico di Milano, 
Piazza Leonardo da Vinci 32 - 20133 Milano, Italy}
\affil[2]{Dipartimento di Matematica, 
Universit\`a degli Studi di Bari, 
Via Edoardo Orabona 4  - 70125 Bari, Italy}
\begin{document}

\title{Virtual Element Method for the Navier--Stokes Equation coupled with 
the Heat Equation}
\maketitle
\begin{abstract}
We consider the Virtual Element discretization of the Navier–Stokes equations coupled with the heat equation where the viscosity depends on the temperature. We present the  Virtual Element discretization of the coupled problem, show its well-posedness, and prove optimal error estimates. Numerical experiments which confirm the theoretical error bounds are also presented.
\end{abstract}

\keywords{Navier–Stokes equations, heat equation, Virtual Element methods.}

\section{Introduction}

In this paper, we consider the stationary flow of a viscous incompressible fluid, in the case where the viscosity of the fluid depends on the temperature.  Such a fluid-thermal coupling appears in many physical phenomena and it is involved in wide range industrial applications, e.g.  cooling processes in steel industries, industrial furnaces, boilers, heat exchangers and nuclear power plants.

Due to its importance in many practical applications, the numerical approximation of coupled {N}avier-{S}tokes and heat equations have been widely  studied. In \cite{oyarzua:2014} a generalized Boussinesq problem modelling the motion of a nonisothermal incompressible fluid subject to a heat source has been discretized by means of mixed finite element methods. A spectral discretization of the {N}avier-{S}tokes equation coupled with the heat equation has been proposed in \cite{Bernardi:2015}, in the stationary case, and in \cite{Bernardi:2015-b} in the unsteady one. Finite Element approximation of the time dependent Boussinesq model with nonlinear viscosity depending on the temperature has been studied in \cite{Aldbaissy:2018}. Recently, the finite element approximation of the heat equation coupled with Stokes equations with nonlinear slip boundary conditions has been analyzed in \cite{Djoko:2020}. Finite element methods for {D}arcy's problem coupled with the heat equation has been studied in \cite{Bernardi:2018}.

In this paper we are interested in the Virtual Element discretization of the coupled thermo-fluid dynamic problem. 
The Virtual Element method (VEM) is  a generalization of the Finite Element Method that takes inspiration from modern Mimetic Finite Difference schemes \cite{volley,autostoppisti,projectors}. It belongs to the family of polytopal element methods, i.e. finite element methods that can support  polygonal/polyhedral meshes.  Polytopal methods such as polygonal FEM, polygonal and Hybridizable Discontinuous Galerkin, Hybrid High Order methods, see e.g.\cite{N:2020, BdPD:2018, dPK:2018, AdP:2018, CdP:2020, LVY:2014, CFQ:2017,Antonietti-FSI-1,Antonietti-FSI-2}
have received substantial attention in the last years for fluid mechanic problems, thanks to their
flexibly in handling complicated geometries and  their ability preserving the underlying physical models' fundamental properties.
 The VEM has received substantial attention in the last ten years for a wide range problems stemming from of Engineering applications. We refer to the book \cite{Antonietti_BeiraodaVeiga_Manzini:2022} where the current state of the art on the Virtual Element Method is reported, collecting contributions from many of the most active researchers in this field. VEMs for Stokes and Navier-Stokes equations have been proposed and analyzed in \cite{ABMV:2014,BLV:2017,vacca:2018,BLV:2018,BDV:2020}; we refer, e.g., to \cite{berrone-supg,BdV-supg,BdV-oseen},  for the development of VEM for convection dominated problems.

%
%
%
%
%
%
%
%
%
%

The remaining part of the manuscript is organized as follows. Section \ref{sec:model_problem} describes the coupled thermo-fluid dynamic problem together with the theoretical results. Section \ref{sub:VEM} is devoted to the Virtual Elements discretization. The existence of discrete solutions is established in \ref{sec:exi} whereas the \textit{a priori} convergence analysis is presented in Section \ref{sec:conv}.  In Section \ref{sec:num} we present some numerical experiments to test the practical performance of the proposed VEM scheme. Finally, in Section\ref{sec:conclusions} we draw some conclusions.

We close this introduction with some standard notation. 
Let $\Omega \subset \R^2$ be the computational domain, 
we denote with $\xx = (x_1, \, x_2)$ the independent variable. 
With a usual notation   
the symbols $\gr$ and $\Delta$   
denote the gradient and the Laplacian for scalar functions, while 
$\dl $, $\Gr$, $\epseps$ and  $\dd$
denote the vector Laplacian,  the gradient, the symmetric gradient operator and the divergence operator,
whereas $\boldsymbol{\dd}$ denotes the vector valued divergence operator for tensor fields. 
%
Throughout the paper, we will follow the usual notation for Sobolev spaces
and norms \cite{Adams:1975}.
Hence, for an open bounded domain $\omega$,
the norms in the spaces $W^s_p(\omega)$ and $L^p(\omega)$ are denoted by
$\|{\cdot}\|_{W^s_p(\omega)}$ and $\|{\cdot}\|_{L^p(\omega)}$ respectively.
Norm and seminorm in $H^{s}(\omega)$ are denoted respectively by
$\|{\cdot}\|_{s,\omega}$ and $|{\cdot}|_{s,\omega}$,
while $(\cdot,\cdot)_{\omega}$ and $\|\cdot\|_{\omega}$ denote the $L^2$-inner product and the $L^2$-norm (the subscript $\omega$ may be omitted when $\omega$ is the whole computational
domain $\Omega$).

\section{Continuous problem}\label{sec:model_problem}

Let $\Omega \subseteq \R^2$ be a polygonal bounded domain with a Lipschitz-continuous boundary $\partial \Omega$.
We consider the following coupled thermo-fluid dynamic problem

\begin{equation}
\label{eq:pb primale}
\left\{
\begin{aligned}
&
\begin{aligned}
-   \boldsymbol{\dd}  (\nu(\theta) \,\epseps (\uu))  + (\Gr \uu ) \,\uu -  \nabla p &= \ff\qquad  & &\text{in $\Omega$,} \\
\dd \, \uu &= 0 \qquad & &\text{in $\Omega$,} \\
-  \dd(\kappa \, \nabla \theta) + \uu \cdot \nabla \theta &=  g \qquad  & &\text{in $\Omega$,} \\
\uu &= 0  \qquad & &\text{on $\partial \Omega$,}\\
\theta &= \td  \qquad & &\text{on $\partial \Omega$,}
\end{aligned}
\end{aligned}
\right.
\end{equation}
where the unknowns $\uu$, $p$ and $\theta$ represent the velocity, the pressure and the temperature of the fluid respectively; the function
$\nu$ is the temperature depending viscosity of the fluid, 
$\kappa$ is the thermal conductivity,
$\ff$ and $g$ stand for the the external volume source terms 
and $\td$ is the Dirichlet datum for the temperature.

Equation  \eqref{eq:pb primale} models the stationary flow of a viscous incompressible fluid (governed by a Navier-Stokes equation),  where the viscosity of the fluid depends on the temperature (governed by an elliptic equation).

For sake of simplicity we here  consider Dirichlet homogeneous boundary conditions for the velocity field, different boundary conditions can be treated as well. Moreover, the analysis of the three dimensional case could be developed with similar arguments. 
From now on, we assume that the model data satisfy the following assumptions.

\smallskip\noindent
\textbf{(A0) Data assumptions:} 
\begin{itemize}
\item  the conductivity $\kappa$ is positive and bounded, i.e. there exist $\kappa_*$, $\kappa^* > 0$ s.t.
\[
0 < \kappa_* \leq \kappa(\xx) \leq \kappa^* \quad \text{for all $\xx \in \Omega$;} 
\]
\item  the viscosity $\nu$ is positive, bounded and Lipschitz continuous, i.e. there exist $\nu_*$, $\nu^*$, $\clip > 0$ s.t.
\[
\begin{gathered}
0 < \nu_* \leq \nu(\theta) \leq \nu^* \qquad \text{for all $\theta \in \mathbb{R}$,}
\\
|\nu(\theta_1) - \nu(\theta_2)| \leq \clip |\theta_1 - \theta_2| \qquad \text{for all $\theta_1, \theta_2 \in \mathbb{R}$;}
\end{gathered}
\]
\item  the external loads satisfy $\ff \in [L^2(\Omega)]^2$, $g \in L^2(\Omega)$;

\item  the Dirichlet datum satisfy $\td \in C^0(\partial \Omega)$.

\end{itemize}

Notice that the regularity assumption on the boundary datum is required in order to define its  nodal interpolant in the discrete VEM setting (see \eqref{eq:thd}). 
Let us define the continuous spaces
\[
\begin{gathered}
\VV := \left[ H_0^1(\Omega) \right]^2, \quad 
Q := L^2_0(\Omega) = 
\left\{ q \in L^2(\Omega) \,\,\, \text{s.t.}  \,\,\, \int_{\Omega} q \,{\rm d}\Omega = 0 \right\}, 
\\
\W := H^1(\Omega), \quad 
\Wz := H^1_0(\Omega), \quad 
\Wd :=\{\sigma \in H^1(\Omega) \,\,\, \text{s.t.}  \,\,\, \sigma_{|\partial \Omega} = \td \},
\end{gathered}
\]
endowed with natural norms, and the forms
\begin{align}
\label{eq:forma av}
\av(\cdot; \, \cdot, \cdot) &\colon \W \times \VV \times \VV \to \R,    
&\quad
\av(\theta; \uu,  \vb) &:= 
\int_{\Omega} \nu(\theta)   \epseps(\uu) : \epseps (\vb) \, {\rm d}\Omega \,,
\\
\label{eq:forma as}
\as(\cdot, \cdot) &\colon \W \times \W \to \R,    
&\quad
\as(\theta,  \sigma) &:= 
\int_{\Omega} \kappa \, \nabla \sigma \cdot \nabla \theta \, {\rm d}\Omega \,,
\\
\label{eq:forma cv}
\cv(\cdot; \, \cdot, \cdot) &\colon \VV \times \VV \times \VV \to \R, 
&\quad
\cv(\ww; \, \uu, \vb) &:=  \int_{\Omega} ( \Gr \uu ) \, \ww \cdot \vb  \,{\rm d}\Omega \,,
\\
\label{eq:forma cs}
\cs(\cdot; \, \cdot, \cdot) &\colon \VV \times \W \times \W \to \R, 
&\quad
\cs(\uu; \, \theta, \sigma) &:=  \int_{\Omega}   \uu \cdot \nabla \theta \, \sigma  \,{\rm d}\Omega \,,
\\
\label{eq:forma b}
b(\cdot, \cdot) &\colon \VV \times Q \to \R, 
&\quad
b(\vb, q) &:=  \int_{\Omega}q\, \dd \vb \,{\rm d}\Omega \,.
\end{align}
Notice that under the assumptions  \textbf{(A0)} the forms are well defined, furthermore the following hold

\smallskip\noindent
\textbf{(P0) Stability properties of the continuous forms:} 
\\
$\bullet$ \,\, for any $\theta \in \W$ the bilinear form $\av(\theta; \, \cdot, \cdot)$ is coercive and continuous, i.e.
\[
\av(\theta; \, \vb, \vb) \geq \nu_* \vert \vb \vert_{1, \Omega}^2 \,,
\quad
\av(\theta; \, \uu, \vb) \leq \nu^* \vert \uu \vert_{1, \Omega} 
\vert \vb \vert_{1, \Omega}\,,
\qquad \text{for all $\uu$, $\vb \in \VV$;}
\]
$\bullet$ \,\, the bilinear form $\as(\cdot, \cdot)$ is coercive and continuous, i.e.
\[
\as(\sigma, \sigma) \geq \kappa_* \vert \sigma \vert_{1, \Omega}^2 \,,
\quad
\as(\theta, \sigma) \leq \kappa^* \vert \theta \vert_{1, \Omega} 
\vert \sigma \vert_{1, \Omega}\,,
\qquad \text{for all $\theta$, $\sigma \in \W$;}
\]
$\bullet$ \,\, the convective trilinear forms $\cv(\cdot; \, \cdot, \cdot)$ and $\cs(\cdot; \, \cdot, \cdot)$ are continuous with continuity constant $\ccon$, i.e.
\[
\begin{aligned}
\cv(\ww; \, \uu, \vb) &\leq  \ccon 
\vert \ww \vert_{1, \Omega} \vert \uu \vert_{1, \Omega} \vert \vb \vert_{1, \Omega} \,,
\qquad
& \text{for all $\ww$, $\uu$, $\vb \in \VV$,}
\\
\cs(\uu; \, \theta, \sigma) &\leq  \ccon
\vert \uu \vert_{1, \Omega} \Vert \theta \Vert_{1, \Omega} \Vert \sigma \Vert_{1, \Omega} \,,
\qquad
& \text{for all $\uu \in \VV$, $\theta$, $\sigma \in \W$;}
\end{aligned}
\]
\\
$\bullet$ \,\, the bilinear form $b(\cdot, \cdot)$ is continuous and realizes the inf-sup condition with inf–sup constant $\beta >0$, i.e. 
\[
\begin{aligned}
&b(\vb, q) \leq |\vb|_{1, \Omega} \Vert q \Vert_{0,\Omega} 
&\qquad &\text{for all $\vb \in \VV$ and $q \in Q$,}
\\
&\sup_{\vb \in \VV} \frac{b(\vb, q)}{|\vb|_{1, \Omega}} \geq \beta \Vert q \Vert_{0,\Omega} 
&\qquad &\text{for all $q \in Q$.}
\end{aligned}
\]

\smallskip\noindent
The variational formulation of Problem~\eqref{eq:pb primale} reads as follows:
\begin{equation}
\label{eq:pb variazionale}
\left\{
\begin{aligned}
& \text{find $(\uu, p, \theta) \in \VV \times Q \times \Wd$, such that} \\
&\begin{aligned}
\av(\theta, \, \uu, \vb) + \cv(\uu; \, \uu, \vb)  + b(\vb, p) &= (\ff, \vb) 
\qquad & &\text{for all $\vb \in \VV$,} \\
b(\uu, q) &= 0 \qquad & & \text{for all $q \in Q$,} \\
\as(\theta, \sigma) + \cs(\uu; \theta, \sigma) &= (g, \sigma) \qquad & &\text{for all $\sigma \in \Wz$.} 
\end{aligned}
\end{aligned}
\right.
\end{equation}
Let us introduce the kernel of the  bilinear form $b(\cdot,\cdot)$
that corresponds to the functions in $\VV$ with vanishing divergence, i.e.
\begin{equation*}
\label{eq:Z}
\ZZ := \{ \vb \in \VV \quad \text{s.t.} \quad \dd \, \vb = 0  \}\,.
\end{equation*}
Then, Problem~\eqref{eq:pb variazionale} can be formulated in the equivalent kernel form:
\begin{equation}
\label{eq:pb variazionale ker}
\left\{
\begin{aligned}
& \text{find $(\uu, \theta) \in \ZZ \times \Wd$, such that} \\
&\begin{aligned}
\av(\theta, \, \uu, \vb) + \cv(\uu; \, \uu, \vb)   &= (\ff, \vb) 
\qquad & &\text{for all $\vb \in \ZZ$,} \\
\as(\theta, \sigma) + \cs(\uu; \theta, \sigma) &= (g, \sigma) \qquad & &\text{for all $\sigma \in \Wz$.} 
\end{aligned}
\end{aligned}
\right.
\end{equation}
We can observe by a direct computation, that, for a fixed $\ww \in \ZZ$, the bilinear forms $\cv(\ww; \, \cdot , \cdot)$ and  $\cs(\ww; \, \cdot , \cdot)$ are skew symmetric, i.e. 
\begin{equation}
\label{eq:skew}
\begin{aligned}
&\cv(\ww; \uu, \vb) = -\cv(\ww; \vb, \uu) \qquad &\text{for all $\uu$, $\vb \in \VV$,}
\\
&\cs(\ww; \theta, \sigma) = -\cs(\ww; \sigma, \theta) \qquad &\text{for all $\theta$, $\sigma \in \W$.}
\end{aligned}
\end{equation}
Therefore, for $\ww \in \ZZ$, the forms $\cv(\ww; \, \cdot,  \cdot)$ and $\cs(\ww; \, \cdot,  \cdot)$  can be equivalently defined as follows
\begin{align}
\label{eq:forma cvskew}
&\cvskew(\ww; \, \uu,  \vb) := \frac{1}{2} \bigl(\cv(\ww; \, \uu, \vb) - \cv(\ww; \, \vb, \uu) \bigr)
\qquad &\text{for all $\uu$, $\vb \in \VV$,}
\\
\label{eq:forma csskew}
&\csskew(\ww; \, \theta,  \sigma) := \frac{1}{2} \bigl (\cs(\ww; \, \theta, \sigma) - \cs(\ww; \,  \sigma, \theta) \bigr)
\qquad &\text{for all $\theta$, $\sigma \in \W$.}
\end{align}
However, at the discrete level $\cv(\ww; \, \cdot ,  \cdot)$ and $\cvskew(\ww; \, \cdot , \cdot)$ (resp. $\cs(\ww; \, \cdot ,  \cdot)$ and $\csskew(\ww; \, \cdot , \cdot)$) will lead to different bilinear forms, in general.

In the context of the analysis of incompressible flows, it is useful to introduce the concept of Helmholtz--Hodge projector (see for instance \cite[Lemma~2.6]{john-linke-merdon-neilan-rebholz:2017} and \cite[Theorem~3.3]{GLS:2019}).
For every $\ww \in [L^2(\Omega)]^2$ there exist $\ww_0 \in H({\rm div}; \, \Omega)$
and $\zeta \in H^1(\Omega) / \R$ such that
\begin{equation}
\label{eq:helmholtz-hodge}
\ww = \ww_0 + \nabla \, \zeta,
\end{equation}
where $\ww_0$ is $L^2$-orthogonal to the gradients, that is $(\ww_0 , \, \nabla \phi) = 0$
for all $\phi \in H^1(\Omega)$ (which implies, in particular, that $\ww_0$ is solenoidal, i.e. ${\rm div} \, \ww_0 = 0$).
The orthogonal decomposition \eqref{eq:helmholtz-hodge} is unique and is called Helmholtz--Hodge decomposition, and $\mathcal{P}(\ww) :=\ww_0$ is the Helmholtz--Hodge projector of $\ww$.

Combining the argument in \cite[Theorem~2.2]{Bernardi:2015} and the definition of  Helmholtz--Hodge projector, the following existence result holds.

\begin{theorem}
\label{thm:exi}
Under the data assumptions \cfan{(A0)}, Problem \eqref{eq:pb variazionale} admits at least a solution $(\uu, p, \theta) \in \VV \times Q \times \Wd$. Moreover the solution satisfies the bound
\begin{equation}
\label{eq:bound cont}
\vert \uu \vert_{1, \Omega}^2 + \Vert \theta \Vert_{1, \Omega}^2 \leq \cest^2
\left( \vert \mathcal{P}(\ff) \vert_{-1, \Omega}^2  + 
\vert g \vert_{-1, \Omega}^2 +
\Vert \td \Vert_{1/2, \partial \Omega}^2 \right)
\end{equation}
where the constant $\cest$ depends on the domain $\Omega$ and on the constants $\kappa_*$, $\kappa^*$ and $\nu_*$ in the data assumptions \cfan{(A0)}.
\end{theorem}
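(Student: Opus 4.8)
The plan is to recast the kernel problem \eqref{eq:pb variazionale ker} as a fixed-point equation for a compact map on the solenoidal space $\ZZ$ and then invoke a Schauder-type fixed-point theorem, the novelty over a standard argument being the use of the Helmholtz--Hodge projector \eqref{eq:helmholtz-hodge} in the velocity estimate. First I would define a decoupling operator $\mathcal{T} \colon \ZZ \to \ZZ$ as follows. Given $\ww \in \ZZ$, solve the linear convection--diffusion problem for the temperature, find $\theta_\ww \in \Wd$ with $\as(\theta_\ww, \sigma) + \cs(\ww; \theta_\ww, \sigma) = (g, \sigma)$ for all $\sigma \in \Wz$; then, inserting this temperature into the viscosity, solve the linear generalized Stokes problem in the kernel, find $\uu_\ww \in \ZZ$ with $\av(\theta_\ww; \uu_\ww, \vb) + \cv(\ww; \uu_\ww, \vb) = (\ff, \vb)$ for all $\vb \in \ZZ$, and set $\mathcal{T}(\ww) := \uu_\ww$. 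Each linear solve is uniquely solvable by Lax--Milgram: coercivity of $\as$ and of $\av(\theta_\ww; \cdot, \cdot)$ is furnished by \textbf{(P0)} with constants $\kappa_*$ and $\nu_*$, while the convective perturbations contribute nothing on the diagonal, since for $\ww \in \ZZ$ the forms $\cv(\ww; \cdot, \cdot)$ and $\cs(\ww; \cdot, \cdot)$ are skew-symmetric by \eqref{eq:skew} (for the heat equation one first lifts the datum by a fixed $\widetilde\theta \in \Wd$ with $\|\widetilde\theta\|_{1,\Omega} \leq C \|\td\|_{1/2, \partial \Omega}$ and tests on $\Wz$). Any fixed point $\ww = \mathcal{T}(\ww)$ yields a pair $(\uu, \theta)$ solving \eqref{eq:pb variazionale ker}.

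The a priori bound is where the Helmholtz--Hodge projector enters. Testing the velocity equation with $\vb = \uu_\ww$ and using $\cv(\ww; \uu_\ww, \uu_\ww) = 0$ gives $\nu_* |\uu_\ww|_{1,\Omega}^2 \leq (\ff, \uu_\ww)$. Since $\uu_\ww \in \ZZ$ is solenoidal and vanishes on $\partial\Omega$, the gradient part of the decomposition \eqref{eq:helmholtz-hodge} is $L^2$-orthogonal to $\uu_\ww$, so $(\ff, \uu_\ww) = (\mathcal{P}(\ff), \uu_\ww) \leq |\mathcal{P}(\ff)|_{-1,\Omega} |\uu_\ww|_{1,\Omega}$, whence the pressure-robust bound $|\uu_\ww|_{1,\Omega} \leq \nu_*^{-1} |\mathcal{P}(\ff)|_{-1,\Omega}$, uniform in $\ww$. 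For the temperature I test with $\sigma = \theta_\ww - \widetilde\theta \in \Wz$; the skew-symmetry of $\cs(\ww; \cdot, \cdot)$ kills the convective diagonal term, and continuity of $\as$, $\cs$ together with the lifting bound control $|\theta_\ww - \widetilde\theta|_{1,\Omega}$ by $|g|_{-1,\Omega}$, $\|\td\|_{1/2,\partial\Omega}$ and $|\uu_\ww|_{1,\Omega}$, the last being already bounded. Collecting the two estimates, and observing that $\ccon$ is a domain-dependent Sobolev embedding constant, yields \eqref{eq:bound cont} with $\cest = \cest(\Omega, \kappa_*, \kappa^*, \nu_*)$. In particular $\mathcal{T}$ maps the closed ball $\{\ww \in \ZZ : |\ww|_{1,\Omega} \leq \nu_*^{-1}|\mathcal{P}(\ff)|_{-1,\Omega}\}$ into itself.

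To apply Schauder's theorem on this ball it remains to show that $\mathcal{T}$ is continuous and compact. Compactness is the point where the convective coupling is exploited: the dependence of $\theta_\ww$ and $\uu_\ww$ on $\ww$ enters only through the trilinear forms $\cs(\ww; \cdot, \cdot)$ and $\cv(\ww; \cdot, \cdot)$, which are continuous with respect to $\ww$ in the $L^4(\Omega)$ topology, and since $H^1(\Omega) \hookrightarrow L^4(\Omega)$ is compact in two dimensions, $\mathcal{T}$ factors through a compact embedding and hence sends bounded sets of $\ZZ$ to relatively compact ones. I expect the continuity of $\mathcal{T}$ to be the main technical obstacle, as it requires passing to the limit in the temperature-dependent viscosity: from $\ww_n \to \ww$ one first deduces $\theta_{\ww_n} \to \theta_\ww$ in $H^1$ by stability of the linear heat solve, then, using the Lipschitz bound $|\nu(\theta_1) - \nu(\theta_2)| \leq \clip |\theta_1 - \theta_2|$ together with the uniform bound $\nu \leq \nu^*$ and dominated convergence, one shows convergence of $\av(\theta_{\ww_n}; \cdot, \cdot)$ to $\av(\theta_\ww; \cdot, \cdot)$ and concludes $\uu_{\ww_n} \to \uu_\ww$ in $H^1$. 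Schauder's theorem then furnishes a fixed point, i.e. a solution $(\uu, \theta) \in \ZZ \times \Wd$ of \eqref{eq:pb variazionale ker} satisfying \eqref{eq:bound cont}.

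Finally, I would recover the pressure. The bounded linear functional $\vb \mapsto (\ff, \vb) - \av(\theta; \uu, \vb) - \cv(\uu; \uu, \vb)$ on $\VV$ vanishes on $\ZZ = \ker b$, so by the inf-sup condition of \textbf{(P0)} the operator associated with $b(\cdot,\cdot)$ has closed range and the functional lies in the range of its transpose; hence there is a unique $p \in Q$ with $b(\vb, p) = (\ff, \vb) - \av(\theta; \uu, \vb) - \cv(\uu; \uu, \vb)$ for all $\vb \in \VV$. The triple $(\uu, p, \theta)$ then solves \eqref{eq:pb variazionale}, which completes the proof.
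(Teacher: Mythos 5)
The paper does not actually prove Theorem \ref{thm:exi}; it obtains it by pointing to \cite[Theorem~2.2]{Bernardi:2015} combined with the Helmholtz--Hodge projector, so there is no in-paper proof to compare against line by line. Your Schauder argument is a faithful realization of that recipe: the decoupled solve $\ww \mapsto \theta_\ww \mapsto \uu_\ww$, coercivity from \textbf{(P0)} plus the skew-symmetry \eqref{eq:skew}, the identity $(\ff,\vb)=(\mathcal{P}(\ff),\vb)$ for $\vb \in \ZZ$ yielding the pressure-robust velocity bound, compactness of $H^1(\Omega)\hookrightarrow L^4(\Omega)$, dominated convergence to pass to the limit in $\nu(\theta_{\ww_n})$ (correctly identified as the delicate point, since $\uu_\ww$ is only $H^1$), and the inf-sup recovery of the pressure are all correctly placed. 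For what it is worth, the paper's discrete analogue (Proposition \ref{prp:existence}) instead works with a single coupled functional $\mathcal{A}^{(\uu_h,\theta^o_h)}$ and the Brouwer-type corollary of Girault--Raviart rather than a decoupled Schauder iteration; both routes are standard, and yours has the merit of isolating exactly where the Lipschitz continuity of $\nu$ enters.

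The one step you should not wave through is ``collecting the two estimates yields \eqref{eq:bound cont}.'' Testing the heat equation with $\sigma=\theta_\ww-\widetilde\theta$ leaves the term $\cs(\ww;\widetilde\theta,\sigma)\leq \ccon \vert\ww\vert_{1,\Omega}\Vert\widetilde\theta\Vert_{1,\Omega}\Vert\sigma\Vert_{1,\Omega}$ (note in passing that the convecting field in the heat solve is $\ww$, not $\uu_\ww$ as you wrote --- immaterial at the fixed point, but it should be stated correctly), so the temperature bound necessarily contains the product $\kappa_*^{-1}\nu_*^{-1}\vert\mathcal{P}(\ff)\vert_{-1,\Omega}\Vert\td\Vert_{1/2,\partial\Omega}$. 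Squaring, this is quartic in the data and is \emph{not} dominated by the right-hand side of \eqref{eq:bound cont} with a constant $\cest$ depending only on $\Omega$, $\kappa_*$, $\kappa^*$, $\nu_*$; this multiplicative coupling is intrinsic to the energy argument (the stability of the convection--diffusion solve with respect to the Dirichlet datum genuinely depends on the convecting field), and integrating by parts or using a Hopf-type lifting only shrinks the coefficient, not the product structure. To land on the bound exactly as stated you must either let $\cest$ absorb a factor $\bigl(1+\nu_*^{-1}\vert\mathcal{P}(\ff)\vert_{-1,\Omega}\bigr)$ --- i.e.\ accept a data-dependent constant, which is arguably what the cited reference does --- or supply an additional argument. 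Everything else in the proposal is sound.
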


%
%
%
%
%
%
%
\noindent
Assuming suitable bounds on the data of the problem and on the velocity solution it is possible to establish the following uniqueness result \cite[Proposition 2.3]{Bernardi:2015}

\begin{theorem}
\label{thm:uni}
Under the data assumptions \cfan{(A0)}, assume moreover that there exist two positive constants 
$C_{\rm data}$ and $C_{\rm sol}$ depending on the domain $\Omega$ and on the constants $\kappa_*$, $\kappa^*$ and $\nu_*$ in the data assumptions \cfan{(A0)} s.t.
\begin{itemize}
\item  [i)] the data of the problem satisfies
\begin{equation}
\label{eq:cdata}
C_{\rm data}^2 \left( \vert \mathcal{P}(\ff) \vert_{-1, \Omega}^2  + 
\vert g \vert_{-1, \Omega}^2 +
\Vert \td \Vert_{1/2, \partial \Omega}^2 \right) < 1 \,,
\end{equation}
\item [ii)] Problem \eqref{eq:pb variazionale} admits a solution $(\uu, p, \theta)$ with $\uu \in W^{1}_q(\Omega)$ where $q > 2$ and the following bound holds
\quad
\begin{equation}
\label{eq:csol}
C_{\rm sol} \, \clip \, \vert \uu \vert_{[W^{1}_q(\Omega)]^2} < 1 \,,
\end{equation}
\end{itemize}
then this solution is unique.
\end{theorem}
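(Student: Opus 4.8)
The plan is to argue by comparison. Assume $(\uu_1,p_1,\theta_1)$ is the solution satisfying the regularity and smallness hypotheses (i)--(ii), and let $(\uu_2,p_2,\theta_2)$ be any other solution of \eqref{eq:pb variazionale}. Both solve the kernel formulation \eqref{eq:pb variazionale ker} (in particular $\uu_1,\uu_2\in\ZZ$) and, by Theorem \ref{thm:exi}, both obey the a priori bound \eqref{eq:bound cont}, so their $H^1$-norms are controlled by the data. Set $\ee := \uu_1 - \uu_2 \in \ZZ$, $e := \theta_1 - \theta_2 \in \Wz$ (the two temperatures share the Dirichlet datum $\td$, hence their difference lies in $\Wz$) and $e_p := p_1 - p_2 \in Q$. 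The goal is to show $\ee=0$, $e=0$ and $e_p=0$.

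First I would subtract the two temperature equations and test with $\sigma=e$. Writing $\cs(\uu_1;\theta_1,e)-\cs(\uu_2;\theta_2,e)=\cs(\uu_1;e,e)+\cs(\ee;\theta_2,e)$ and using $\cs(\uu_1;e,e)=0$ (skew-symmetry \eqref{eq:skew}, since $\uu_1\in\ZZ$), the coercivity of $\as$, the continuity of $\cs$ from \textbf{(P0)} and Poincar\'e's inequality yield
\begin{equation*}
\kappa_* |e|_{1,\Omega} \le \ccon\,\cpoin\,\|\theta_2\|_{1,\Omega}\,|\ee|_{1,\Omega}.
\end{equation*}
Because $\|\theta_2\|_{1,\Omega}$ is bounded by the data through \eqref{eq:bound cont}, this gives $|e|_{1,\Omega}\le C\,|\ee|_{1,\Omega}$ with $C$ controlled by the data norm.

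Next I would subtract the two momentum equations in kernel form and test with $\vb=\ee\in\ZZ$. The decisive algebraic step is to split both nonlinearities so that the \emph{regular} solution carries the coefficient role: for convection $\cv(\uu_1;\uu_1,\ee)-\cv(\uu_2;\uu_2,\ee)=\cv(\ee;\uu_1,\ee)+\cv(\uu_2;\ee,\ee)$ with $\cv(\uu_2;\ee,\ee)=0$, and for the viscous term $\av(\theta_1;\uu_1,\ee)-\av(\theta_2;\uu_2,\ee)=\av(\theta_2;\ee,\ee)+\int_\Omega(\nu(\theta_1)-\nu(\theta_2))\,\epseps(\uu_1):\epseps(\ee)\,{\rm d}\Omega$. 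Using the coercivity $\av(\theta_2;\ee,\ee)\ge\nu_*|\ee|_{1,\Omega}^2$, bounding $\cv(\ee;\uu_1,\ee)$ by $\ccon|\uu_1|_{1,\Omega}|\ee|_{1,\Omega}^2$ (Sobolev $H^1\hookrightarrow L^4$ in two dimensions), and estimating the viscosity term through $|\nu(\theta_1)-\nu(\theta_2)|\le\clip|e|$ followed by a three-factor H\"older inequality on $L^{2q/(q-2)}\times L^q\times L^2$ — which is exactly where $\uu_1\in[W^1_q(\Omega)]^2$, $q>2$, is needed so that $\epseps(\uu_1)\in L^q$ and $e\in L^{2q/(q-2)}$ by embedding — I would obtain
\begin{equation*}
\nu_*|\ee|_{1,\Omega}^2 \le \ccon\,|\uu_1|_{1,\Omega}\,|\ee|_{1,\Omega}^2 + \clip\,\csob\,\vert\uu_1\vert_{[W^1_q(\Omega)]^2}\,|e|_{1,\Omega}\,|\ee|_{1,\Omega}.
\end{equation*}

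Finally I would insert $|e|_{1,\Omega}\le C|\ee|_{1,\Omega}$ into this inequality, reaching $(\nu_*-\ccon|\uu_1|_{1,\Omega}-C'\clip\vert\uu_1\vert_{[W^1_q(\Omega)]^2})|\ee|_{1,\Omega}^2\le0$, where $|\uu_1|_{1,\Omega}$ and the constant hidden in $C'$ are controlled by the data via \eqref{eq:bound cont}. Choosing $\cdata$ large enough that \eqref{eq:cdata} forces $\ccon|\uu_1|_{1,\Omega}<\nu_*/2$, and $\csol$ large enough that \eqref{eq:csol} forces the remaining term below $\nu_*/2$, makes the bracket strictly positive, hence $\ee=0$; the temperature bound then gives $e=0$. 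With $\uu_1=\uu_2$ and $\theta_1=\theta_2$, subtracting the full momentum equations \eqref{eq:pb variazionale} leaves $b(\vb,e_p)=0$ for all $\vb\in\VV$, and the inf--sup condition of \textbf{(P0)} yields $e_p=0$. I expect the main obstacle to be the viscous coupling term: isolating it so that only $\uu_1$ (not the merely $H^1$-regular $\uu_2$) sits in $L^q$, and then calibrating the two constants $\cdata$ and $\csol$ so that both the pure convective Navier--Stokes contribution and the temperature-to-viscosity feedback are simultaneously absorbed into the coercivity.
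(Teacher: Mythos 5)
Your argument is correct and follows essentially the same route the paper takes: the paper defers the continuous uniqueness proof to \cite[Proposition~2.3]{Bernardi:2015} but reproduces exactly this strategy for the discrete analogue in Proposition~\ref{prp:uni} --- subtract the two solutions, use skew-symmetry of the convective forms to kill the quadratic terms, control the temperature difference by the velocity difference, handle the viscosity coupling via the Lipschitz bound on $\nu$ together with a three-factor H\"older estimate exploiting $\uu\in[W^1_q(\Omega)]^2$, and absorb everything into the coercivity using the smallness conditions \eqref{eq:cdata} and \eqref{eq:csol}. Your splitting of the nonlinear terms and the final inf--sup step for the pressure match the paper's scheme, so no further comment is needed.
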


\section{Virtual Elements discretization}
\label{sub:VEM}
\subsection{Notation and preliminaries}
\label{sec:notations}

We now introduce some basic tools and notations useful in the construction and the theoretical analysis of Virtual Element Methods.

Let $\{\Omega_h\}_h$ be a sequence of decompositions of the domain $\Omega \subset \R^2$ into general polytopal elements $E$ 
where $h := \sup_{E \in \Omega_h} h_{E}$. 
%
%
We suppose that $\{\Omega_h\}_h$ fulfils the following assumption.

\noindent
\textbf{(A1) Mesh assumption:}
\\
there exists a positive constant $\rho$ such that for any $E \in \{\Omega_h\}_h$ 
\begin{itemize}
\item $E$ is star-shaped with respect to a ball $B_E$ of radius $ \geq\, \rho \, h_E$;
\item any edge $e$ of $E$ has length  $ \geq\, \rho \, h_E$.
\end{itemize}
We remark that the hypotheses above, though not too restrictive in many practical cases, could possibly be further relaxed, combining the present analysis with the studies in~\cite{BLR:2017,brenner-guan-sung:2017,brenner-sung:2018}.

Using standard VEM notations, for $n \in \N$ and  $s \in \R^+$ and $p=1, \dots, +\infty$
let us introduce the spaces:
\begin{itemize}
\item $\Pk_n(\omega)$: the set of polynomials on $\omega \subset \Omega$ of degree $\leq n$  (with $\Pk_{-1}(\omega)=\{ 0 \}$),
\item $\Pk_n(\Omega_h) := \{q \in L^2(\Omega) \quad \text{s.t} \quad q|_{E} \in  \Pk_n(E) \quad \text{for all $E \in \Omega_h$}\}$,
\item $W^s_p(\Omega_h) := \{v \in L^2(\Omega) \quad \text{s.t} \quad v|_{E} \in  W^s_p(E) \quad \text{for all $E \in \Omega_h$}\}$ 
\end{itemize}
equipped with the broken norm and seminorm
\[
\begin{aligned}
\|v\|^p_{W^s_p(\Omega_h)} &:= \sum_{E \in \Omega_h} \|v\|^p_{W^s_p(E)}\,,
&\quad
|v|^p_{W^s_p(\Omega_h)} &:= \sum_{E \in \Omega_h} |v|^p_{W^s_p(E)}\,, 
&\qquad \text{if $1 \leq p < \infty$,}
\\
\|v\|^p_{W^s_p(\Omega_h)} &:= \max_{E \in \Omega_h} \|v\|^p_{W^p(E)}\,,
&\quad
|v|^p_{W^s_p(\Omega_h)} &:= \max_{E \in \Omega_h} |v|^p_{W^s_p(E)}\,, 
&\qquad \text{if $p= \infty$.}
\end{aligned}
\]
Let $E \in \Omega_h$, we denote with $h_{E}$ the diameter, with $|E|$ the area, with $\xx_{E} = (x_{E, 1}, x_{E, 2})$ the centroid. 
A natural basis associated with the space $\Pk_n(E)$ is the set of normalized
monomials
\begin{equation*}
\label{eq:M_n}
\M_n(E) := \left\{ 
m_{\boldsymbol{\alpha}},
\,\,\,  \text{with} \,\,\,
|\boldsymbol{\alpha}| \leq n
\right\}
\end{equation*}
where, for any multi-index $\boldsymbol{\alpha} = (\alpha_1, \alpha_2) \in \N^2$
\[
m_{\boldsymbol{\alpha}} :=
\prod_{i=1}^2  
\left(\frac{x_i - x_{E, i}}{h_{E}} \right)^{\alpha_i}
\qquad \text{and} \qquad
|\boldsymbol{\alpha}|:= \sum_{i=1}^2 \alpha_i \,.
\]
Moreover for any $m \leq n$ we denote with
\[
\Pkw_{n \setminus m}(E) = {\rm span}  
\left\{ 
m_{\boldsymbol{\alpha}},
\,\,\,  \text{with} \,\,\,
m +1 \leq |\boldsymbol{\alpha}| \leq n
\right\} \,.
\]
Furthermore, we introduce the following notations: 
let $\{\mathcal{X}^E\}_{E \in \Omega_h}$ be a family 
of forms 
$\mathcal{X}^E \colon \prod_{j=1}^{\ell} W_{p_j}^{s_j}(E) \to \R$,
then we define 
\begin{equation}
\label{eq:XG}
\mathcal{X}\colon \prod_{j=1}^{\ell} W_{p_j}^{s_j}(\Omega_h)\to \R \,,
\quad 
\mathcal{X}(u_1, \dots, u_{\ell}) := \sum_{E \in \Omega_h}\mathcal{X}^E(u_1, \dots, u_{\ell}) \,,
\end{equation}
for any $u_j \in W^{s_j}_{p_j}(\Omega_h)$ and $j=1, \dots, \ell$.

For any $E$,  
let us define the following polynomial projections:
\begin{itemize}
\item the $\boldsymbol{L^2}$\textbf{-projection} $\Pi_n^{0, E} \colon L^2(E) \to \Pk_n(E)$, given by
\begin{equation}
\label{eq:P0_k^E}
\int_{E} q_n (v - \, {\Pi}_{n}^{0, E}  v) \, {\rm d} E = 0 \qquad  \text{for all $v \in L^2(E)$  and $q_n \in \Pk_n(E)$,} 
\end{equation} 
with obvious extension for vector functions $\Pi^{0, E}_{n} \colon [L^2(E)]^2 \to [\Pk_n(E)]^2$ and 
tensor functions $\boldsymbol{\Pi}^{0, E}_{n} \colon [L^2(E)]^{2 \times 2} \to [\Pk_n(E)]^{2\times 2}$;

\item the $\boldsymbol{H^1}$\textbf{-seminorm projection} ${\Pi}_{n}^{\nabla,E} \colon H^1(E) \to \Pk_n(E)$, defined by 
\begin{equation*}
\label{eq:Pn_k^E}
\left\{
\begin{aligned}
& \int_{E} \gr  \,q_n \cdot \gr ( v - \, {\Pi}_{n}^{\nabla,E}   v)\, {\rm d} E = 0 \quad  \text{for all $v \in H^1(E)$ and  $q_n \in \Pk_n(E)$,} \\
& \int_{\partial E}(v - \,  {\Pi}_{n}^{\nabla, E}  v) \, {\rm d}s= 0 \, ,
\end{aligned}
\right.
\end{equation*}
with extension for vector fields $\Pi^{\nabla, E}_{n} \colon [H^1(E)]^2 \to [\Pk_n(E)]^2$.
\end{itemize}

In the following the symbol $\lesssim$ will denote a bound up to a generic positive constant,
independent of the mesh size $h$, but which may depend on 
$\Omega$, on the ``polynomial'' order of the
method $k$ and on the regularity constant appearing in the mesh assumption \textbf{(A1)}.

We finally recall the following well know useful results:

\begin{itemize}

\item Poincar\'e inequality \cite[Theorem 1.3.3]{quarteroni-valli:book} 
\begin{equation}
\label{eq:poincare}
\Vert \phi \Vert_{1, \Omega} \lesssim \vert \phi \vert_{1, \Omega}
\qquad \text{for any $\phi \in H^1_0(\Omega)$;}
\end{equation}

\item Sobolev embedding $H^1(\Omega) \subset L^p(\Omega)$ \cite[Theorem 1.3.4]{quarteroni-valli:book}: let $2 \leq p <\infty$, then
\begin{equation}
\label{eq:sobemb}
\Vert \phi \Vert_{L^p(\Omega)} \lesssim \Vert \phi \Vert_{1, \Omega}
\qquad \text{for any $\phi \in H^1(\Omega)$;}
\end{equation}

\item Polynomial inverse estimate \cite[Theorem 4.5.11]{brenner-scott:book}: 
let $1 \leq q,p \leq \infty$, then for any $E \in \Omega_h$
\begin{equation}
\label{eq:inverse}
\Vert  p_n \Vert_{L^q(E)} \lesssim h_E^{2/q - 2/p} \Vert  p_n \Vert_{L^p(E)}
\quad \text{for any $p_n \in \Pk_n(E)$;}
\end{equation}

\item Bramble-Hilbert Lemma \cite[Lemma 4.3.8]{brenner-scott:book}: let $0\leq t \leq s \leq n+1$, and $1 \leq q,p \leq \infty$ such that $s - 2/p > t - 2/q$, then for any $E \in \Omega_h$
\begin{equation}
\label{eq:bramble}
\vert  \phi - \Pi^{0,E}_n \phi \vert_{W^t_q(E)} \lesssim 
h_E^{s-t + 2/q - 2/p} \vert  \phi \vert_{W^s_p(E)}
\quad \text{for any $\phi \in W^s_p(E)$.}
\end{equation}
\end{itemize}



The present section is devoted to the construction of the proposed virtual elements scheme.
In Subsections \ref{sub:v-p spaces} and \ref{sub:t-space} we present the inf-sup stable divergence--free velocities-pressures pair of spaces and the temperatures spaces, respectively.
In Subsection \ref{sub:forms} we define the discrete computable forms. Finally in Subsection \ref{sub:vem problem} we show the virtual elements discretization of Problems \eqref{eq:pb variazionale} and \eqref{eq:pb variazionale ker}.

Let $k \geq 2$ be the  ``polynomial'' order of the method.
The lowest order case $k=1$ can be treated as well using a slightly different approach \cite{ABMV:2014}.
We recall that, in standard finite element fashion, the VEM spaces are first defined elementwise and
then assembled globally.
In the following we will denote by $E$ a general polygon having $\ell_e$ edges $e$, while 
$\nn_E$ will denote the unit vector  that is normal to $\partial E$ and outward with respect to $E$.

\subsection{Virtual Elements velocities space and pressure space}
\label{sub:v-p spaces}

In the present section we outline an overview of the divergence-free Virtual
Elements spaces for the Navier-Stokes equation \cite{BLV:2017,vacca:2018,BLV:2018}.

We consider on each polygonal element $E \in \Omega_h$ the ``enhanced'' virtual space 
\begin{equation}
\label{eq:v-loc}
\begin{aligned}
\VDL := \biggl\{  
\vb_h \in [C^0(\overline{E})]^2 \,\,\, \text{s.t.} \,\,\,
(i)& \,\,  
  \boldsymbol{\Delta}    \vb_h  +  \nabla s \in \xxp \Pk_{k-1}(E), 
\,\,\text{ for some $s \in L_0^2(E)$,} 
\\
(ii)&  
\,\,   \dd \, \vb_h \in \Pk_{k-1}(E) \,, 
\\
(iii) &
\,\,  {\vb_h}_{|e} \in [\Pk_k(e)]^2 \,\,\, \forall e \in \partial E, 
\\
(iv) &
\,\,   (\vb_h - \Pi^{\nabla,E}_k \vb_h, \, \xxp \, \widehat{p}_{k-1} )_E = 0
\,\,\, \text{$\forall \widehat{p}_{k-1} \in \widehat{\Pk}_{k-1 \setminus k-3}(E)$}
\,\,\biggr\} \,,
\end{aligned}
\end{equation}
where $\xxp = (x_2, -x_1)$.
We here summarize the main properties of the space $\VDL$
(we refer to \cite{BLV:2018} for a deeper analysis).

\begin{itemize}
\item [\textbf{(P1)}] \textbf{Polynomial inclusion:} $[\Pk_k(E)]^2 \subseteq \VDL$;

\item [\textbf{(P2)}] \textbf{Degrees of freedom:}
the following linear operators $\mathbf{D_{\boldsymbol{V}}}$ constitute a set of DoFs for $\VDL$:
\begin{itemize}
\item[$\mathbf{D_{\boldsymbol{V}}1}$] the values of $\vb_h$ at the vertexes of the polygon $E$,
\item[$\mathbf{D_{\boldsymbol{V}}2}$] the values of $\vb_h$ at $k-1$ distinct points of every edge $e \in \partial E$,
\item[$\mathbf{D_{\boldsymbol{V}}3}$] the moments of $\vb_h$ 
$$
\frac{1}{|E|}\int_E  \vb_h \cdot \boldsymbol{m}^{\perp} m_{\boldsymbol{\alpha}} \, {\rm d}E  
\qquad 
\text{for any $m_{\boldsymbol{\alpha}} \in \M_{k-3}(E)$,}
$$
where $\boldsymbol{m}^{\perp} := \frac{1}{h_E}(x_2 - x_{2,E}, -x_1 + x_{1,E})$,
\item[$\mathbf{D_{\boldsymbol{V}}4}$] the moments of $\dd \vb_h$ 
$$
\frac{h_E}{|E|}\int_E \dd \vb_h \, m_{\boldsymbol{\alpha}} \, {\rm d}E  
\qquad \text{for any $m_{\boldsymbol{\alpha}} \in \M_{k-1}(E)$ with $|\boldsymbol{\alpha}| > 0$;}
$$
\end{itemize}
\item [\textbf{(P3)}] \textbf{Polynomial projections:}
the DoFs $\mathbf{D_{\boldsymbol V}}$ allow us to compute the following linear operators:
\[
\P0 \colon \VDL \to [\Pk_{k}(E)]^2, \qquad
\PP0 \colon \Gr \VDL \to [\Pk_{k-1}(E)]^{2 \times 2} \,.
\]
\end{itemize} 
The global velocity space $\VDG$ is defined by gluing the local spaces with the obvious
associated sets of global DoFs:
\begin{equation}
\label{eq:v-glo}
\VDG := \{\vb_h \in \VV \quad \text{s.t.} \quad {\vb_h}_{|E} \in \VDL \quad \text{for all $E \in \Omega_h$} \}\,.
\end{equation}
The discrete pressure space $\QDG$ is given by the piecewise polynomial functions of degree $k-1$, i.e.
\begin{equation}
\label{eq:q-glo}
\QDG := \{q_h \in \QCG \quad \text{s.t.} \quad {q_h}_{|E} \in \Pk_{k-1}(E) \quad \text{for all $E \in \Omega_h$} \}\,.
\end{equation}
The couple of spaces $(\VDG, \, \QDG)$ is inf-sup stable \cite{BLV:2017}, and we denote with $\isd >0$ the inf-sup stability constant, i.e. $\isd$ is such that
\begin{equation}
\label{eq:infsup}
\sup_{\vb_h \in \VDG} \frac{b(\vb_h, q_h)}{\vert \vb_h\vert_{1, \Omega}} \geq \isd \|q_h\|_{0,E}
\qquad \text{for any $q_h \in \QDG$.}
\end{equation}
Let us introduce the discrete kernel
\begin{equation}
\label{eq:z-glo}
\ZDG := \{ \vb_h \in \VDG \quad \text{s.t.} \quad  b(\vb_h, q_h) = 0 \quad \text{for all $q_h \in \QDG$}\}
\end{equation}
then recalling $(ii)$ in \eqref{eq:v-loc} and \eqref{eq:q-glo}, the following kernel inclusion holds
\begin{equation}
\label{eq:kernel}
\ZDG \subseteq \ZCG \,,
\end{equation}
i.e. the functions in the discrete kernel are exactly divergence-free.

\subsection{Virtual Elements space for the temperatures}
\label{sub:t-space}
In this section, we briefly introduce the $H^1$-conforming virtual space for the temperatures, that consists on the so-called nodal ``enhanced'' virtual space \cite{projectors}.
We thus consider on each element $E \in \Omega_h$  the space
\begin{equation}
\label{eq:w-loc}
\begin{aligned}
\WDL := \biggl\{  
\sigma_h \in C^0(\overline{E}) \,\,\, \text{s.t.} \,\,\,
(i)& \,  
\,  \Delta \sigma_h \in \Pk_{k}(E), 
\\
(ii) &
\, \, {\sigma_h}_{|e} \in \Pk_k(e) \,\,\, \forall e \in \partial E, 
\\
(iii) &
\, \, (\sigma_h - \Pi^{\nabla,E}_k \sigma_h, \, \widehat{p}_{k} )_E = 0
\,\,\, \text{$\forall \widehat{p}_{k} \in \widehat{\Pk}_{k \setminus k-2}(E)$}
\,\,\biggr\} \,.
\end{aligned}
\end{equation}
We here summarize the main properties of the space $\WDL$
(see \cite{projectors}).

\begin{itemize}
\item [\textbf{(P4)}] \textbf{Polynomial inclusion:} $\Pk_k(E) \subseteq \WDL$;

\item [\textbf{(P5)}] \textbf{Degrees of freedom:}
the following linear operators $\mathbf{D_{\boldsymbol \Sigma}}$ constitute a set of DoFs for $\WDL$:
\begin{itemize}
\item[$\mathbf{D_{\boldsymbol \Sigma}1}$] the values of $\sigma_h$ at the vertexes of the polygon $E$,
\item[$\mathbf{D_{\boldsymbol \Sigma}2}$] the values of $\sigma_h$ at $k-1$ distinct points of every edge $e \in \partial E$,
\item[$\mathbf{D_{\boldsymbol \Sigma}3}$] the moments of $\sigma_h$ 
$$
\frac{1}{|E|}\int_E  \sigma_h  \,  m_{\boldsymbol{\alpha}} \, {\rm d}E  
\qquad \text{for any $m_{\boldsymbol{\alpha}} \in \M_{k-2}(E)$;}
$$
\end{itemize}
\item [\textbf{(P6)}] \textbf{Polynomial projections:}
the DoFs $\mathbf{D_{\boldsymbol \Sigma}}$ allow us to compute the following linear operators:
\[
\P0 \colon \WDL \to \Pk_{k}(E), \qquad
\Pp0 \colon \nabla \WDL \to [\Pk_{k-1}(E)]^2 \,.
\]
\end{itemize} 
The global velocity space $\WDG$ is defined by gluing the local spaces with the obvious
associated sets of global DoFs:
\begin{equation*}
\label{eq:w-glo-g}
\WDG := \{\sigma_h \in \W \,\,\, \text{s.t.} \,\,\, {\sigma_h}_{|E} \in \WDL \,\,\, \text{for all $E \in \Omega_h$} \}\,,
\end{equation*}
with its homogeneous boundary counterpart
\begin{equation}
\label{eq:w-glo-z}
\WDZG := \{\sigma_h \in \WDG \,\,\, \text{s.t.} \,\,\, {\sigma_h}_{|\partial \Omega} = 0 \}\,.
\end{equation}
In order to treat non-homogeneous Dirichlet data, 
accordingly to the data assumptions \textbf{(A0)},
let us consider the  ``nodal'' interpolant $\tdh$ of $\td$ defined by:
\begin{equation}
\label{eq:thd}
\begin{gathered}
\tdh \in C^0(\partial \Omega) 
\quad \text{s.t.} \quad
{\tdh}_{|e} \in \Pk_{k}(e) 
\quad \text{for any edge $e \subset \partial \Omega$,} 
\\
\tdh(\xx_i) = \td(\xx_i) 
\qquad 
\text{for any node $\xx_i \in e$, for any edge $e \subset \partial \Omega$.}
\end{gathered} 
\end{equation}
Then the discrete counterpart of the space $\Wd$ is defined as follows:
\begin{equation}
\label{eq:w-glo-d}
\WDDG := \{\sigma_h \in \WDG \,\,\, \text{s.t.} \,\,\, {\sigma_h}_{|\partial \Omega} = \tdh \}\,.
\end{equation}

\subsection{Virtual Element forms}
\label{sub:forms}

The next step in the construction of our method is the definition of discrete versions of the continuous second order forms in \eqref{eq:forma av} and \eqref{eq:forma as},
of the convective forms in \eqref{eq:forma cv} and \eqref{eq:forma cs} and the approximation of the right-hand side terms.
It is clear that for an arbitrary function in $\VDG$ or $\WDG$ the forms are not computable since the discrete functions are not known in closed form.
Therefore, following the usual procedure in the VEM setting, we need to construct discrete forms that are computable by {means of the only knowledge of} the DoFs.

In the light of properties \textbf{(P3)} and \textbf{(P6)} we define the computable discrete forms.

\noindent $\bullet$ \, Discrete second order forms (cf.  \eqref{eq:forma av} and \eqref{eq:forma as})
\begin{align}
\label{eq:forma avhl}
\avhl(\theta_h; \uu_h,  \vb_h) &:= 
\int_{E} \nu\bigl(\P0 \theta_h \bigr) \,  \PP0 \epseps(\uu_h) : \PP0\epseps (\vb_h) \, {\rm d}E
+\svl(\theta_h; \uu_h, \, \vb_h) ,
\\
\label{eq:forma ashl}
\ashl(\theta_h,  \sigma_h) &:= 
\int_{E} \kappa \, \Pp0\nabla \theta_h \cdot \Pp0 \nabla \sigma_h \, {\rm d}E 
+ \ssl(\theta_h, \, \sigma_h) ,
\end{align}
where the VEM stabilizing terms are given by
\begin{align}
\label{eq:svstab}
\svl(\theta_h; \uu_h, \, \vb_h) &:= \nu(\Pi^{0,E}_0 \theta_h) \,
S_V^E \bigl( (I - \P0) \uu_h, \, (I - \P0 ) \vb_h \bigr) \,,
\\
\label{eq:ssstab}
\ssl(\theta_h, \, \sigma_h) &:= \bigl(\Pi^{0,E}_0 \kappa \bigr) \,
S_T^E \bigl( (I - \P0) \theta_h, \, (I - \P0 ) \sigma_h \bigr) \,,
\end{align}
and where $S_V^E(\cdot, \cdot) \colon \VDL \times \VDL \to \R$ and $S_E^E(\cdot, \cdot) \colon \WDL \times \WDL \to \R$ are computable symmetric discrete forms satisfying 
\begin{equation}
\label{eq:stab}
\begin{gathered}
\vert \vb_h \vert^2_{1,E} \lesssim S_V^E(\vb_h, \vb_h) \lesssim \vert \vb_h \vert^2_{1,E} 
\quad \text{for all $\vb \in \VDL \cap \ker \P0$,}
\\
\vert  \sigma_h \vert^2_{1,E} \lesssim S_T^E(\sigma_h, \sigma_h) \lesssim \vert  \sigma_h \vert^2_{1,E} 
\quad \text{for all $\sigma_h \in \WDL \cap \ker \P0$.}
\end{gathered}
\end{equation}
Many examples can be found in the VEM literature 
\cite{volley,BDR:2017}.
In the present paper we consider the so-called \texttt{dofi-dofi} stabilization defined as follows: let $\vec{\uu}_h$, $\vec{\vb}_h$ and $\vec{\theta}_h$, $\vec{\sigma}_h$  denote the real valued vectors containing the values of the local degrees of freedom associated to $\uu_h$, $\vb_h$ in the space $\VDG$ and to $\theta_h$ and $\sigma_h$ in the space $\WDG$ respectively, then
\begin{equation*}
\label{eq:dofi}
S_V^E(\uu_h, \vb_h) :=\vec{\uu}_h \cdot \vec{\vb}_h \,,
\qquad
S_T^E(\theta_h, \sigma_h) :=\vec{\theta}_h \cdot \vec{\sigma}_h \,.
\end{equation*}

\noindent $\bullet$ \, Discrete convective forms (cf. \eqref{eq:forma cv} and \eqref{eq:forma cs})
\begin{align}
\label{eq:forma cvhl}
c_{V,h}^E(\ww_h; \, \uu_h, \vb_h) &:= 
\int_{E} \bigl[\PP0( \Gr \uu_h ) \, \P0 \ww_h \bigr] \cdot \P0 \vb_h  \,{\rm d}E,
\\
\label{eq:forma cshl}
c_{T,h}^E(\uu_h; \, \theta_h, \sigma_h) &:=  \int_{E}  \bigl(\P0 \uu_h \cdot \Pp0\nabla \theta_h \bigr) \, \P0 \sigma_h  \,{\rm d}E \,,
\end{align}
and their skew-symmetric formulation (cf. \eqref{eq:forma cvskew} and \eqref{eq:forma csskew})
\begin{align}
\label{eq:forma cvhlskew}
&\cvhl(\ww_h; \, \uu_h,  \vb_h) := \frac{1}{2} \bigl(c_{V,h}^E(\ww_h; \, \uu_h, \vb_h) - c_{V,h}^E(\ww_h; \, \vb_h, \uu_h) \bigr) \,,
\\
\label{eq:forma cshlskew}
&\cshl(\ww_h; \, \theta_h,  \sigma_h) := \frac{1}{2} \bigl (c_{T,h}^E(\ww_h; \, \theta_h, \sigma_h) - c_{T,h}^E(\ww_h; \, \sigma_h, \theta_h) \bigr) \,.
\end{align}

\noindent $\bullet$ \, \, Discrete external forces
\begin{equation}
\label{eq:forma fh gh}
{\ff_h}_{|E} := \P0 \ff \,,
\quad
{g_h}_{|E} := \P0 g \,,
\qquad \text{for all $E \in \Omega_h$.}
\end{equation}

\noindent
The global forms, defined accordingly to the notation \eqref{eq:XG}, satisfy the following.

\smallskip\noindent
\textbf{(P7) Stability properties of the discrete forms:} 
\\
$\bullet$ \,\, for any $\theta \in \W$ the bilinear form $\avh(\theta; \, \cdot, \cdot)$ is coercive and continuous, i.e.
\[
\avh(\theta; \, \vb_h, \vb_h) \geq \avs \nu_* \vert \vb_h \vert_{1, \Omega}^2 \,,
\quad
\avh(\theta; \, \uu_h, \vb_h) \leq \avS \nu^* \vert \uu_h \vert_{1, \Omega} 
\vert \vb_h \vert_{1, \Omega}\,,
\]
for all $\uu_h$, $\vb_h \in \VDG$;

\noindent
$\bullet$ \,\, the bilinear form $\ash(\cdot, \cdot)$ is coercive and continuous, i.e.
\[
\ash(\sigma_h, \sigma_h) \geq \ats \kappa_* \vert \sigma_h \vert_{1, \Omega}^2 \,,
\quad
\as(\theta_h, \sigma_h) \leq \atS \kappa^* \vert \theta_h \vert_{1, \Omega} 
\vert \sigma_h \vert_{1, \Omega}\,,
\]
for all $\theta_h$, $\sigma_h \in \WDG$;

\noindent
$\bullet$ \,\, the convective trilinear forms $\cvh(\cdot; \, \cdot, \cdot)$ and $\csh(\cdot; \, \cdot, \cdot)$ are continuous, i.e.
\[
\begin{aligned}
\cvh(\ww; \, \uu, \vb) &\leq  \cconh 
\vert \ww \vert_{1, \Omega} \vert \uu \vert_{1, \Omega} \vert \vb \vert_{1, \Omega} \,,
\qquad
& \text{for all $\ww$, $\uu$, $\vb \in \VV$,}
\\
\csh(\uu; \, \theta, \sigma) &\leq  \cconh  
\vert \uu \vert_{1, \Omega} \Vert \theta \Vert_{1, \Omega} \Vert \sigma \Vert_{1, \Omega} \,,
\qquad
& \text{for all $\uu \in \VV$, $\theta$, $\sigma \in \W$,}
\end{aligned}
\]
where $\avs$, $\avS$, $\ats$, $\atS$, $\cconh$ are positive constants depending on the domain $\Omega$, on the ``polynomial'' order of the
method $k$ and on the regularity constant appearing in the mesh assumption \textbf{(A1)}.

\subsection{Virtual Element problem}
\label{sub:vem problem}

{ Having in mind} the spaces \eqref{eq:v-glo}, \eqref{eq:q-glo}, \eqref{eq:w-glo-d} and \eqref{eq:w-glo-z},  the discrete forms  \eqref{eq:forma avhl}, \eqref{eq:forma ashl}, \eqref{eq:forma cvhlskew}, \eqref{eq:forma cshlskew}, the form \eqref{eq:forma b}, the discrete loading terms \eqref{eq:forma fh gh}, the virtual element discretization of Problem \eqref{eq:pb variazionale} is given by

\begin{equation}
\label{eq:pb vem}
\left\{
\begin{aligned}
& \text{find $(\uu_h, p_h, \theta_h) \in \VDG \times \QDG \times \WDDG$, such that} \\
&\begin{aligned}
\avh(\theta_h, \, \uu_h, \vb_h) + \cvh(\uu_h; \, \uu_h, \vb_h)  + b(\vb_h, p_h) &= (\ff_h, \vb_h) 
\quad & &\text{$\forall \vb_h \in \VDG$,} \\
b(\uu_h, q_h) &= 0 \quad & & \text{$\forall  q_h \in \QDG$,} \\
\ash(\theta_h, \sigma_h) + \csh(\uu_h; \theta_h, \sigma_h) &= (g_h, \sigma_h) \qquad & &\text{$\forall \sigma_h \in \WDZG$.} 
\end{aligned}
\end{aligned}
\right.
\end{equation}
Recalling the definition of { the}discrete kernel $\ZDG$ in \eqref{eq:z-glo}, the previous problem can be also written in the kernel formulation, i.e.
\begin{equation}
\label{eq:pb vem ker}
\left\{
\begin{aligned}
& \text{find $(\uu_h, \theta_h) \in \ZZ_h \times \WDDG$, such that} \\
&\begin{aligned}
\avh(\theta_h, \, \uu_h, \vb_h) + \cvh(\uu_h; \, \uu_h, \vb_h)   &= (\ff_h, \vb_h) 
\quad & &\text{$\forall \vb_h \in \ZZ_h$,} \\
\ash(\theta_h, \sigma_h) + \csh(\uu_h; \theta_h, \sigma_h) &= (g_h, \sigma_h) \quad & &\text{$\forall \sigma_h \in \WDZG$.} 
\end{aligned}
\end{aligned}
\right.
\end{equation}

\section{Existence of discrete solutions}
\label{sec:exi}

In the present section we establish the existence of discrete solutions.

\begin{proposition}
\label{prp:existence}
Under the data assumptions \cfan{(A0)}, assume moreover that there exists $\cexi$ depending on the domain $\Omega$ and the shape regularity constant in assumption \cfan{(A1)} s.t.
\begin{equation}\label{aux:Prop:5:1}
\Vert \tdh\Vert_{1/2, \partial \Omega} 
\leq \cexi  \min\{\nu_*, \kappa_* \}
\end{equation}
then  Problem \eqref{eq:pb vem} admits at least a solution $(\uu_h, p_h, \theta_h) \in \ZDG \times \QDG \times \WDDG$.
Moreover, the following bound holds
\begin{equation}
\label{eq:bound disc}
\vert \uu_h \vert_{1, \Omega}^2 + \Vert \theta_h  \Vert_{1, \Omega}^2 \leq \cesth^2
\left( \Vert \ff_h \Vert_{\ZDG^*}^2  + 
\vert g \vert_{-1, \Omega}^2 +
\Vert \td \Vert_{1/2, \partial \Omega}^2 \right)
\end{equation}
where the constant $\cesth$ depends on the domain $\Omega$ and on the constants $\kappa_*$, $\kappa^*$ and $\nu_*$ in the data assumptions \cfan{(A0)} and the shape regularity constant in assumption \cfan{(A1)}.
\end{proposition}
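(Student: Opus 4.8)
The plan is to recast Problem \eqref{eq:pb vem} in its kernel form \eqref{eq:pb vem ker} and to produce a solution by Brouwer's fixed-point theorem, exploiting that the discrete spaces are finite dimensional. First I would observe that, thanks to the inf--sup stability \eqref{eq:infsup} of the pair $(\VDG,\QDG)$, it suffices to solve the reduced system \eqref{eq:pb vem ker} for $(\uu_h,\theta_h)\in\ZDG\times\WDDG$: the pressure $p_h\in\QDG$ is then recovered in the standard way from the first equation of \eqref{eq:pb vem} and the inf--sup condition. To cope with the non-homogeneous temperature datum I would fix a discrete lifting $\theta_h^\partial\in\WDDG$ of $\tdh$ satisfying the discrete stability estimate $|\theta_h^\partial|_{1,\Omega}\lesssim\Vert\tdh\Vert_{1/2,\partial\Omega}$ (a discrete trace/extension bound compatible with \textbf{(A1)}), and write $\theta_h=\theta_h^0+\theta_h^\partial$ with $\theta_h^0\in\WDZG$, so that the true unknowns live in the fixed finite-dimensional space $\ZDG\times\WDZG$.

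Next I would define a map $\mathcal{T}\colon\ZDG\times\WDZG\to\ZDG\times\WDZG$, $(\ww_h,\psi_h)\mapsto(\uu_h,\theta_h^0)$, by freezing the advection field and the viscosity argument: let $\uu_h\in\ZDG$ solve $\avh(\psi_h+\theta_h^\partial;\uu_h,\vb_h)+\cvh(\ww_h;\uu_h,\vb_h)=(\ff_h,\vb_h)$ for all $\vb_h\in\ZDG$, and let $\theta_h^0\in\WDZG$ solve $\ash(\theta_h^0,\sigma_h)+\csh(\ww_h;\theta_h^0,\sigma_h)=(g_h,\sigma_h)-\ash(\theta_h^\partial,\sigma_h)-\csh(\ww_h;\theta_h^\partial,\sigma_h)$ for all $\sigma_h\in\WDZG$. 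Each of these is a linear problem whose bilinear form is coercive: the skew-symmetric structure of $\cvh$ and $\csh$ makes the convective contribution vanish identically on the diagonal, while $\avh$ and $\ash$ are coercive by the stability properties \textbf{(P7)}. Hence $\mathcal{T}$ is well defined, and it is continuous because $\nu$ is continuous, the polynomial projections are linear, and the solution of a uniformly coercive linear system depends continuously on its (continuously varying) coefficients.

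I would then establish the a priori bounds that make a suitable ball invariant under $\mathcal{T}$. Testing the velocity equation with $\vb_h=\uu_h$ kills the convective term and yields $|\uu_h|_{1,\Omega}\le(\avs\nu_*)^{-1}\Vert\ff_h\Vert_{\ZDG^*}$, a bound independent of the inputs. Testing the temperature equation with $\sigma_h=\theta_h^0$ again kills the convective diagonal and, using coercivity of $\ash$, the Poincar\'e inequality \eqref{eq:poincare} and the continuity bounds of \textbf{(P7)}, gives an estimate of the form $\ats\kappa_*|\theta_h^0|_{1,\Omega}\lesssim|g|_{-1,\Omega}+(\kappa^*+\cconh|\ww_h|_{1,\Omega})\,\Vert\tdh\Vert_{1/2,\partial\Omega}$. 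Restricting $\ww_h$ to the velocity ball just found and invoking the hypothesis \eqref{aux:Prop:5:1}, the nonlinear cross term $\cconh|\ww_h|_{1,\Omega}\Vert\tdh\Vert_{1/2,\partial\Omega}\lesssim \nu_*^{-1}\Vert\ff_h\Vert_{\ZDG^*}\Vert\tdh\Vert_{1/2,\partial\Omega}$ becomes controlled linearly by $\Vert\ff_h\Vert_{\ZDG^*}$, which fixes a temperature radius depending only on the data. Thus $\mathcal{T}$ maps the product ball into itself, Brouwer's theorem provides a fixed point $(\uu_h,\theta_h^0)$, hence a solution $(\uu_h,\theta_h)$ of \eqref{eq:pb vem ker}, and the recovered $p_h$ completes a solution of \eqref{eq:pb vem}. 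Collecting the two estimates and using \eqref{aux:Prop:5:1} once more yields \eqref{eq:bound disc}.

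The main obstacle I expect is precisely this last absorption. Without care, the coupling term $\csh(\uu_h;\theta_h^\partial,\cdot)$ produces a contribution proportional to $|\uu_h|_{1,\Omega}\Vert\tdh\Vert_{1/2,\partial\Omega}\sim\nu_*^{-1}\Vert\ff_h\Vert_{\ZDG^*}\Vert\tdh\Vert_{1/2,\partial\Omega}$, a product of data norms whose square is a quartic term incompatible with the sum-of-squares right-hand side of \eqref{eq:bound disc}; the smallness assumption \eqref{aux:Prop:5:1}, through $\Vert\tdh\Vert_{1/2,\partial\Omega}\le\cexi\min\{\nu_*,\kappa_*\}$, is exactly what cancels the offending $\nu_*^{-1}$ (and $\kappa_*^{-1}$) factors and restores a bound that is quadratic in the data. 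A secondary technical point is the stability of the discrete lifting $\theta_h^\partial$, which relies on a discrete trace/extension estimate governed by the mesh regularity in \textbf{(A1)}.
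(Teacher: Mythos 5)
Your argument is correct and shares the paper's overall architecture (discrete lifting of the boundary datum, reduction to the kernel formulation, Brouwer, pressure recovered from the inf--sup condition \eqref{eq:infsup}), but the key step is carried out by a genuinely different mechanism. The paper constructs the same discrete harmonic lifting $\tparh$ you postulate --- its bound \eqref{eq:tpar-bouh} is exactly your trace/extension estimate --- and then, rather than linearizing, applies the Girault--Raviart corollary of Brouwer's theorem directly to the nonlinear residual map $\mathcal{A}^{(\uu_h,\tinth)}$, showing that $\mathcal{A}^{(\uu_h,\tinth)}(\uu_h,\tinth)\ge 0$ on a sphere of data-dependent radius. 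You instead build an Oseen-type frozen-coefficient solution operator and verify invariance of a product ball. Both proofs hinge on the same ingredients (skew-symmetry of $\cvh$ and $\csh$ to kill the convective diagonal, coercivity from \textbf{(P7)}, and the smallness \eqref{aux:Prop:5:1} to handle the coupling term $\csh(\cdot;\tparh,\cdot)$), but they deploy the smallness differently: in the paper that term is estimated quadratically in the unknowns and must be absorbed into the coercivity constants, so \eqref{aux:Prop:5:1} is needed for the sign condition and hence for existence itself; in your scheme the velocity radius is fixed a priori independently of the inputs, the cross term is only linear in the output, and the ball is invariant regardless --- the smallness is needed only, as you correctly identify, to convert the product $\nu_*^{-1}\Vert\ff_h\Vert_{\ZDG^*}\Vert\tdh\Vert_{1/2,\partial\Omega}$ into something compatible with the sum-of-squares bound \eqref{eq:bound disc}. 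Your route costs an extra (routine, finite-dimensional) continuity argument for the solution operator, but it buys a construction that coincides with the \texttt{LINEAR FIXED POINT ITERATION} analyzed in Section 6, so existence and solver convergence become essentially the same computation; the paper's version is more compact.
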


\begin{proof}
Let $\tparh \in \WDDG$ be the solution of the following auxiliary problem
\begin{equation*}
\label{eq:tparh}
\left\{
\begin{aligned}
& \text{find $\tparh \in \WDDG$, such that} \\
&\begin{aligned}
\int_{\Omega} \nabla \tparh \cdot \nabla \sigma_h \, {\rm d}\Omega
   &= 0 
\quad & &\text{$\forall \sigma_h \in \WDZG$.}
\end{aligned}
\end{aligned}
\right.
\end{equation*}
Then, by Lax-Milgram Lemma,  $\tparh$ satisfies the bound
\begin{equation}
\label{eq:tpar-bouh}
\Vert \tparh\Vert_{1, \Omega} \leq C_\Omega \, \Vert\tdh\Vert_{1/2, \partial \Omega} \,.
\end{equation}
Notice that Problem \eqref{eq:pb vem ker} can be equivalently formulated as follows

\begin{equation}
\label{eq:pb vem kerZ}
\left\{
\begin{aligned}
& \text{find $(\uu_h, \tinth) \in \ZDG \times \WDZG$, such that} \\
&\begin{aligned}
\avh(\tinth + \tparh, \, \uu_h, \vb_h) + \cvh(\uu_h; \, \uu_h, \vb_h)  &= (\ff_h, \vb_h) 
 & &\text{$\forall \vb_h \in \ZDG$,} \\
\ash(\tinth + \tparh, \sigma_h) + \csh(\uu_h; \tinth + \tparh, \sigma_h) &= (g_h, \sigma_h)  & &\text{$\forall \sigma_h \in \WDZG$.} 
\end{aligned}
\end{aligned}
\right.
\end{equation}
We now prove that Problem \eqref{eq:pb vem kerZ} admits solutions.
Let $\beth := \ZDG \times \WDZG$ and, for any $(\uu_h, \tinth) \in \beth$ let $\mathcal{A}^{(\uu_h, \tinth)}\colon \beth \to \R$  given  by
\begin{equation}
\label{eq:prpex1}
\begin{aligned}
\mathcal{A}^{(\uu_h, \tinth)}(\vb_h, \sigma_h)
&:= 
\avh(\tinth + \tparh, \, \uu_h, \vb_h) + 
\cvh(\uu_h; \, \uu_h, \vb_h) - (\ff_h, \vb_h)  \\
& \quad
+ \ash(\tinth + \tparh, \sigma_h)  +
\csh(\uu_h; \tinth + \tparh, \sigma_h) - (g_h, \sigma_h) 
\end{aligned}
\end{equation}
for all $(\vb_h, \sigma_h) \in \beth$.
Then employing property \textbf{(P7)} and the skew-symmetry of the convective forms $\cvh$ and $\csh$ (cf. \eqref{eq:forma cvhlskew} and \eqref{eq:forma cshlskew}) we obtain 
\begin{equation}
\label{eq:prpex2}
\begin{aligned}
\mathcal{A}^{(\uu_h, \tinth)}(\uu_h, \tinth)
& \geq 
\alpha_* \nu_* \vert \uu_h\vert_{1, \Omega}^2 
+ \beta_* \kappa_* \vert \tinth\vert_{1, \Omega}^2 
- \Vert \ff_h \Vert_{\ZDG^*}  \vert \uu_h\vert_{1, \Omega}
\\
&\qquad
- \vert g_h\vert_{-1, \Omega} \vert \tinth \vert_{-1, \Omega} 
+ \ash(\tparh, \tinth)  +
\csh(\uu_h; \tparh, \tinth)   
 \,.
\end{aligned}
\end{equation}
From property \textbf{(P7)} and \eqref{eq:tpar-bouh} it holds that
\begin{equation}
\label{eq:prpex3}
\begin{aligned}
\ash(\tparh, \tinth) &\geq 
-\ash(\tparh, \tparh)^{1/2} \,
\ash(\tinth, \tinth)^{1/2}
\geq -\beta^* \,\kappa^* \, C_\Omega \,  \Vert \tdh \Vert_{1/2, \Omega}  \,
\vert \tinth\vert_{1, \Omega} \,,
\\
\csh(\uu_h; \tparh, \tinth)   &\geq - \cconh 
\Vert \tparh \Vert_{1, \Omega}  \,
\vert \uu_h\vert_{1, \Omega}
\vert \tinth\vert_{1, \Omega} 
\geq - \frac{\cconh}{2} C_\Omega \,
\Vert \tdh \Vert_{1/2, \Omega}  
(\vert \uu_h\vert_{1, \Omega}^2 + \vert \tinth\vert_{1, \Omega}^2) \,.
\end{aligned}
\end{equation}
Therefore inserting \eqref{eq:prpex3} in \eqref{eq:prpex2}, we infer
\begin{equation}
\label{eq:prpex4}
\begin{aligned}
\mathcal{A}^{(\uu_h, \tinth)}(\uu_h, \tinth)&
\geq 
\left( \min\{\alpha_*, \beta_* \}\min\{\nu_*, \kappa_* \}  - \frac{1}{2}\cconh C_\Omega \,\Vert \tdh \Vert_{1/2, \Omega}   \right) (\vert \uu_h\vert_{1, \Omega}^2 + \vert \tinth\vert_{1, \Omega}^2) +
\\
& \quad
- \left( (\Vert \ff_h \Vert_{\ZDG^*}^2 + \vert g_h\vert_{-1, \Omega}^2)^{1/2} + \beta^*\, \kappa^*C_\Omega \, \Vert \tdh \Vert_{1/2, \Omega} \right)
(\vert \uu_h\vert_{1, \Omega}^2 + \vert \tinth\vert_{1, \Omega}^2)^{1/2}
 \,.
\end{aligned}
\end{equation}
Let us set
\[
\cexi \leq  \frac{\min\{\alpha_*, \beta_* \}}{\cconh C_\Omega } \,,
\]
then from { \eqref{aux:Prop:5:1}} and  \eqref{eq:prpex4} we get
\begin{equation}
\label{eq:prpex5}
\begin{aligned}
\mathcal{A}^{(\uu_h, \tinth)}(\uu_h, \tinth)&
\geq 
\frac{1}{2} \min\{\alpha_*, \beta_* \}\min\{\nu_*, \kappa_* \}   (\vert \uu_h\vert_{1, \Omega}^2 + \vert \tinth\vert_{1, \Omega}^2) +
\\
& \quad 
- \left( (\Vert \ff_h \Vert_{\ZDG^*}^2 + \vert g_h\vert_{-1, \Omega}^2)^{1/2} + \beta^* \,\kappa^* C_\Omega \,\Vert \tdh \Vert_{1/2, \Omega} \right)
(\vert \uu_h\vert_{1, \Omega}^2 + \vert \tinth\vert_{1, \Omega}^2)^{1/2}
 \,.
\end{aligned}
\end{equation}
Let 
\begin{equation}
\label{eq:rho}
\rho := 2\frac{(\Vert \ff_h \Vert_{\ZDG^*}^2 + \vert g_h\vert_{-1, \Omega}^2)^{1/2} +  \beta^* \,\kappa^* C_\Omega \, \Vert \tdh \Vert_{1/2, \Omega}}
{\min\{\alpha_*, \beta_* \}\min\{\nu_*, \kappa_* \}}
\end{equation}
and
$
\mathcal{S} := 
\{ (\vb_h, \sigma_h) \in \beth \quad \text{s.t.} \quad \vert \uu_h\vert_{1, \Omega}^2 + \vert \tinth\vert_{1, \Omega}^2 \leq \rho^2\}
$
then \eqref{eq:prpex5} implies 
\[
\mathcal{A}^{(\uu_h, \tinth)}(\uu_h, \tinth) \geq 0 \qquad 
\text{for any $(\uu_h, \tinth) \in \partial \mathcal{S}$.}
\]
Employing the fixed-point Theorem \cite[Chap. IV, Corollary 1.1]{girault-raviart:book2} there exists $(\uu_h, \tinth) \in \mathcal{S}$ s.t. 
$\mathcal{A}^{(\uu_h, \tinth)}(\vb_h, \sigma_h) = 0$ for all $(\vb_h, \sigma_h) \in \beth$, i.e. Problem \eqref{eq:pb vem kerZ} admits solution.
Let $\theta_h = \tinth + \tparh$, then $(\uu_h, \theta_h) \in \ZDG \times \WDZG$ is a solution of Problem \eqref{eq:pb vem ker}. 
Bound \eqref{eq:bound disc} follows from \eqref{eq:rho} and \eqref{eq:tpar-bouh} and the definition of $ \mathcal{S}$.

Finally, the existence of solutions $(\uu_h, p_h, \tinth) \in \VDG \times \QDG \times \WDZG$ of the equivalent Problem \eqref{eq:pb vem} is a direct consequence of \eqref{eq:infsup} (see for instance  \cite{boffi-brezzi-fortin:book}).
\end{proof}

\begin{remark}
\label{rm:existence}
Notice that, differently from the continuous problem, in order to establish the existence result for the discrete problem \eqref{aux:Prop:5:1} we need that the conductivity $\kappa$ and the viscosity $\nu$ are sufficiently large with respect to the Dirichlet datum $\theta_D$ (cf. bound \eqref{eq:pb vem}). 
A careful inspection of the proof of Theorem \ref{thm:exi} (see \cite[Theorem~2.2]{Bernardi:2015}) 
shows that the gap between the continuous and the discrete case is the different construction of the (skew-symmetric) convective forms $c_T(\uu; \, \cdot, \cdot)$ and $c_T^{\rm skew}(\uu_h; \, \cdot, \cdot)$.
\end{remark}

We now investigate whether Problem \eqref{eq:pb vem ker} admits a unique solution.
We introduce the analysis with the following lemma.

\begin{lemma}
\label{lm:lipshitz}
Let $\sigma_{h,1}$, $\sigma_{h,2} \in \WDDG$, let $\vb_h \in \VDG$ such that $\vb_h \in [W_1^p(\Omega_h)]^2$ with $p>2$.
There exists a positive constant $\cclip$ only depending on $\Omega$, $k$, and the shape regularity constant in \cfan{(A1)} such that for any $\ww_h\in \VDG$
\begin{equation}
\label{eq:cclip}
\begin{aligned}
\vert \avh(\sigma_{h,1}; \vb_h, \ww_h) -  \avh(\sigma_{h,2}; \vb_h, \ww_h) \vert 
\leq 
\cclip \clip 
\vert \sigma_{h,1}-\sigma_{h,2}\vert_{1, \Omega} \, 
\vert \vb_h \vert_{[W^1_q(\Omega)]^2} \,
\vert \ww_h \vert_{1, \Omega}   \,.
\end{aligned}
\end{equation} 
\end{lemma}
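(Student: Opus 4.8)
The plan is to exploit the elementwise structure of $\avh$ (cf.\ \eqref{eq:forma avhl}--\eqref{eq:svstab}) together with the Lipschitz continuity of $\nu$ from assumption \textbf{(A0)}. Setting $w := \sigma_{h,1}-\sigma_{h,2} \in \WDZG$, the difference $\avh(\sigma_{h,1};\vb_h,\ww_h)-\avh(\sigma_{h,2};\vb_h,\ww_h)$ decomposes, on each $E$, into a consistency contribution
\[
\int_E \bigl[\nu(\P0\sigma_{h,1})-\nu(\P0\sigma_{h,2})\bigr]\,\PP0\epseps(\vb_h):\PP0\epseps(\ww_h)\,{\rm d}E
\]
and a stabilization contribution $\bigl[\nu(\Pi^{0,E}_0\sigma_{h,1})-\nu(\Pi^{0,E}_0\sigma_{h,2})\bigr]\,S_V^E\bigl((I-\P0)\vb_h,(I-\P0)\ww_h\bigr)$. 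In both, the Lipschitz bound replaces the viscosity difference by $\clip$ times (a polynomial projection of) $w$, leaving a product of three factors: one involving $w$, one a symmetric-gradient term of $\vb_h$, and one of $\ww_h$.

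The driving idea is a three-factor Hölder inequality with exponents $r,q,2$, where $\tfrac1r=\tfrac12-\tfrac1q$ (finite since $q>2$). On the consistency term I would apply Hölder on each $E$ to obtain $\|\P0 w\|_{L^r(E)}\,\|\PP0\epseps(\vb_h)\|_{L^q(E)}\,\|\PP0\epseps(\ww_h)\|_{L^2(E)}$, then invoke the $L^r$- and $L^q$-stability of the polynomial $L^2$-projections (a consequence of \eqref{eq:inverse} and the mesh assumption \textbf{(A1)}) to pass to $\|w\|_{L^r(E)}$, $|\vb_h|_{W^1_q(E)}$ and $|\ww_h|_{1,E}$. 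Summing over $E$ with the discrete Hölder inequality (same exponents) yields $\clip\,\|w\|_{L^r(\Omega)}\,|\vb_h|_{W^1_q(\Omega_h)}\,|\ww_h|_{1,\Omega}$; finally the Sobolev embedding \eqref{eq:sobemb} (with exponent $r$) and the Poincaré inequality \eqref{eq:poincare} (legitimate since $w$ vanishes on $\partial\Omega$) convert $\|w\|_{L^r(\Omega)}$ into $|w|_{1,\Omega}$, giving the claimed bound for this part.

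For the stabilization term I would first bound $S_V^E$ by Cauchy--Schwarz and the stability estimate \eqref{eq:stab}, which applies since $(I-\P0)\vb_h\in\VDL\cap\ker\P0$, obtaining $|(I-\P0)\vb_h|_{1,E}\,|(I-\P0)\ww_h|_{1,E}\lesssim |\vb_h|_{1,E}\,|\ww_h|_{1,E}$ via the $H^1$-seminorm stability of the $L^2$-projection. The viscosity difference is controlled by $\clip\,|\Pi^{0,E}_0 w|$, and the key point is a scaling balance: $L^r$-stability of the mean gives $|\Pi^{0,E}_0 w|\lesssim |E|^{-1/r}\|w\|_{L^r(E)}$, while Hölder gives $|\vb_h|_{1,E}\le |E|^{1/2-1/q}\,|\vb_h|_{W^1_q(E)}$; since $\tfrac1r=\tfrac12-\tfrac1q$ the powers of $|E|$ cancel exactly, so this term scales identically to the consistency one. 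Summing over $E$ and applying \eqref{eq:sobemb}--\eqref{eq:poincare} as before closes the estimate, with $\cclip$ collecting the constants from projection stability, \eqref{eq:stab}, Sobolev and Poincaré (hence depending only on $\Omega$, $k$ and the shape-regularity constant).

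The main obstacle is the stabilization term: one must (i) establish the $L^r$-stability of the $L^2$-projections on general polygons, which is where \textbf{(A1)} and the inverse estimate \eqref{eq:inverse} enter, and (ii) verify that the mesh-size powers produced by $\Pi^{0,E}_0 w$ and by the $L^q\!\to\! L^2$ Hölder step on $\vb_h$ cancel, so that the non-Hilbertian $W^1_q$ norm of $\vb_h$ appears with no residual factor of $h_E$. The consistency term is comparatively routine once the correct Hölder exponents are fixed.
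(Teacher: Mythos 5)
Your proposal is correct and follows essentially the same route as the paper: the same split into a consistency part and a stabilization part, the same Lipschitz step, the same three-exponent H\"older argument with $\tfrac1r=\tfrac12-\tfrac1q$ (the paper's $p$), the same cancellation of the mesh-size powers coming from the inverse estimate on $\Pi^{0,E}_0 w$ and the $L^q\!\to\!L^2$ bound on $\epseps(\vb_h)$, and the same concluding use of the discrete H\"older inequality and the Sobolev embedding (with Poincar\'e, which the paper leaves implicit since $\sigma_{h,1}-\sigma_{h,2}\in\WDZG$). The only cosmetic difference is that you phrase the scaling in terms of $|E|$ and projection stability where the paper invokes the inverse estimate \eqref{eq:inverse} and the Bramble--Hilbert bound \eqref{eq:bramble} directly.
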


\begin{proof}
From definition of $\avh(\cdot, \cdot)$ (cf. \eqref{eq:forma avhl}) we have
\begin{equation}
\label{eq:lmlip1}
\begin{aligned}
\vert  \avh(\sigma_{h,1}; \vb_h, \ww_h) -  \avh(\sigma_{h,2}; \vb_h, \ww_h) \vert  
\leq
\sum_{E \in \Omega_h} 
\left \vert \svl(\sigma_{h,1}; \vb_h, \ww_h) - \svl(\sigma_{h,2}; \vb_h, \ww_h) \right \vert  +
\\
+ \sum_{E \in \Omega_h}  \int_{E} \bigl \vert \nu\bigl(\P0 \sigma_{h,1} \bigr)- \nu\bigl(\P0 \sigma_{h,2}  \bigr) \bigr \vert
\,  \bigl\vert \PP0\epseps (\vb_h) \bigr \vert 
\bigl\vert \PP0\epseps (\ww_h) \bigr \vert \, {\rm d}E  
:= \eta_1 + \eta_2 \,.
\end{aligned}
\end{equation}
Concerning the term $\eta_1$, from \eqref{eq:svstab} and \eqref{eq:stab}, and recalling property \textbf{(P0)}, we infer
\[
\begin{aligned}
\eta_1 
&\leq 
\sum_{E \in \Omega_h} 
\bigl \vert \nu(\Pi^{0,E}_0 \sigma_{h,1}) - \nu(\Pi^{0,E}_0 \sigma_{h,2}) \bigr \vert \,
\bigl \vert S_V^E \bigl( (I - \P0) \vb_h, \, (I - \P0 ) \ww_h \bigr) \bigr \vert
\\
&\lesssim \clip
\sum_{E \in \Omega_h} 
 \vert \Pi^{0,E}_0 (\sigma_{h,1} -  \sigma_{h,2}) \vert \,
\vert  (I - \P0) \vb_h \vert_{1,E} \, \vert(I - \P0 ) \ww_h \vert_{1,E} \,.
\end{aligned}
\]
Then employing the inverse estimate \eqref{eq:inverse} and the Bramble-Hilbert Lemma \eqref{eq:bramble}
\[
\begin{aligned}
\eta_1 
&\lesssim
\clip
\sum_{E \in \Omega_h} 
h_{ E}^{-2/p} \Vert \Pi^{0,E}_0 (\sigma_{h,1} -  \sigma_{h,2}) \Vert_{L^p(E)} \, 
h_E^{1 -2/q}\vert  \vb_h \vert_{[W^{1}_q(\Omega_h)]^2} \, \vert \ww_h \vert_{1,E} 
\end{aligned}
\]
where  $p$ is such that $1/p + 1/q = 1/2$. Therefore, since $\Pi^{0,E}_0$ is continuous with respect to the $L^p$-norm, from the H\"older inequality for sequences we infer
\[
\begin{aligned}
\eta_1 
&\lesssim
\clip
\sum_{E \in \Omega_h} 
\Vert \sigma_{h,1} -  \sigma_{h,2} \Vert_{L^p(E)} \, 
\vert  \vb_h \vert_{[W^1_q(E)]^2} \, \vert \ww_h \vert_{1,E} 
\\
&\lesssim
\clip
\biggl( \sum_{E \in \Omega_h}  \Vert \sigma_{h,1} -  \sigma_{h,2} \Vert_{L^p(E)}^p \biggr)^{1/p} 
\biggl( \sum_{E \in \Omega_h} \vert  \vb_h \vert_{[W^{1}_q(\Omega_h)]^2}^q \biggr)^{1/q} 
\biggl( \sum_{E \in \Omega_h}  \vert \ww_h \vert_{1,E}^2  \biggr)^{1/2} 
\\
&\lesssim
\clip 
\Vert \sigma_{h,1} -  \sigma_{h,2} \Vert_{L^p(\Omega)}
\vert  \vb_h \vert_{[W^{1}_q(\Omega_h)]^2}
\vert \ww_h \vert_{1,\Omega} \,.
\end{aligned}
\]
Then, since $p \geq 2$, from the Sobolev embedding \eqref{eq:sobemb} we obtain
\begin{equation}
\label{eq:lmlip2}
\eta_1 
\lesssim
\clip
\vert \sigma_{h,1} -  \sigma_{h,2} \vert_{1,\Omega}
\vert  \vb_h \vert_{[W^{1}_q(\Omega_h)]^2}
\vert \ww_h \vert_{1,\Omega} \,.
\end{equation}
For what concerns the term $\eta_2$ in \eqref{eq:lmlip1}, using similar argument to that used for $\eta_1$, we have
\begin{equation}
\label{eq:lmlip3}
\begin{aligned}
\eta_2 
&\leq 
\sum_{E \in \Omega_h}  \int_{E} \bigl \vert \nu\bigl(\P0 \sigma_{h,1} \bigr)- \nu\bigl(\P0 \sigma_{h,2}  \bigr) \bigr \vert
\,  \bigl\vert \PP0\epseps (\vb_h) \bigr \vert 
\bigl\vert \PP0\epseps (\ww_h) \bigr \vert \, {\rm d}E 
\\
&\lesssim \clip
\sum_{E \in \Omega_h} 
 \Vert \P0 (\sigma_{h,1} -  \sigma_{h,2}) \Vert_{L^p(E)} \,
\vert   \vb_h \vert_{[W^{1}_q(\Omega_h)]^2} \, 
\vert  \ww_h \vert_{1,E}
\\
&\lesssim
\clip
\vert \sigma_{h,1} -  \sigma_{h,2} \vert_{1,\Omega}
\vert  \vb_h \vert_{[W^{1}_q(\Omega_h)]^2}
\vert \ww_h \vert_{1,\Omega} \,.
\end{aligned}
\end{equation}
The thesis follows combining \eqref{eq:lmlip2} and \eqref{eq:lmlip3} in \eqref{eq:lmlip1}.
\end{proof}

We now state the following result concerning the uniqueness of the discrete solution.

\begin{proposition}
\label{prp:uni}
Under the data assumptions \cfan{(A0)}, assume moreover that there exist two positive constants 
$\widehat{C}_{\rm data}$ and $\widehat{C}_{\rm sol}$ depending on the domain $\Omega$, on $k$, on the constants $\kappa_*$, $\kappa^*$ and $\nu_*$ in the data assumptions \cfan{(A0)}
and on the shape regularity constant in  assumption \cfan{(A1)} s.t.
\begin{itemize}
\item  [i)] the data of the problem satisfies
\begin{equation}
\label{eq:cdatah}
\cdatah^2 \left( \vert \ff_h \vert_{\ZDG*}^2  + 
\vert g_h \vert_{-1, \Omega}^2 +
\Vert \tdh \Vert_{1/2, \partial \Omega}^2 \right) < 1 \,,
\end{equation}

\item [ii)] Problem \eqref{eq:pb vem} admits a solution $(\uu_h, p_h, \tinth)$ with $\uu_h \in [W^{1}_q(\Omega_h)]^2$ where $q > 2$ and the following bound holds
\begin{equation}
\label{eq:csolh}
\csolh \, \clip \, \vert \uu_h \vert_{[W^{1}_q(\Omega_h)]^2} < 1 \,.
\end{equation}
\end{itemize}
Then, referring to \eqref{eq:bound disc} and \eqref{eq:cclip}, if
\begin{equation}
\label{eq:uni}
K := \frac{\cconh \cesth}{\alpha_* \nu_* \cdatah} 
\left( 1 + \frac{\cclip}{\beta_* \kappa_* \csolh}
\right) < 1
\end{equation}
this solution is unique.
\end{proposition}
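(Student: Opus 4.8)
The plan is to argue by uniqueness of the difference: suppose Problem~\eqref{eq:pb vem ker} admits two solutions $(\uu_h^1,\theta_h^1)$ and $(\uu_h^2,\theta_h^2)$ in $\ZDG \times \WDDG$, both enjoying the regularity of hypothesis~ii). Since the two temperatures carry the same boundary interpolant $\tdh$, the differences $\ee_{\uu} := \uu_h^1 - \uu_h^2 \in \ZDG$ and $e_\theta := \theta_h^1 - \theta_h^2 \in \WDZG$ have vanishing boundary data, so by the Poincar\'e inequality \eqref{eq:poincare} I may freely pass between full norms and $H^1$-seminorms. The strategy is to close an estimate for $|e_\theta|_{1,\Omega}$ in terms of $|\ee_{\uu}|_{1,\Omega}$, feed it into the velocity balance through the Lipschitz estimate of Lemma~\ref{lm:lipshitz}, and recover precisely the contraction factor $K$ of \eqref{eq:uni}.

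First I would treat the heat equation. Subtracting the two temperature equations and splitting the convective difference as $\csh(\uu_h^1;\theta_h^1,\sigma_h)-\csh(\uu_h^2;\theta_h^2,\sigma_h)=\csh(\uu_h^1;e_\theta,\sigma_h)+\csh(\ee_{\uu};\theta_h^2,\sigma_h)$, I test with $\sigma_h=e_\theta$. The diagonal term $\csh(\uu_h^1;e_\theta,e_\theta)$ vanishes because $\cshl$ is skew-symmetric by construction (cf.~\eqref{eq:forma cshlskew}), so coercivity of $\ash$ and continuity of $\csh$ from property~\textbf{(P7)} give
\begin{equation*}
\ats\kappa_* |e_\theta|_{1,\Omega}^2 \le |\csh(\ee_{\uu};\theta_h^2,e_\theta)| \le \cconh\, |\ee_{\uu}|_{1,\Omega}\,\|\theta_h^2\|_{1,\Omega}\,|e_\theta|_{1,\Omega}.
\end{equation*}
Combining the a priori bound \eqref{eq:bound disc} with the data smallness \eqref{eq:cdatah}, i.e.\ $\|\theta_h^2\|_{1,\Omega}\le \cesth/\cdatah$, yields the key control
\begin{equation*}
|e_\theta|_{1,\Omega}\le \frac{\cconh\cesth}{\ats\kappa_*\cdatah}\,|\ee_{\uu}|_{1,\Omega}.
\end{equation*}

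Next I would treat the velocity equation. Reorganising the viscous and convective differences as $\avh(\theta_h^1;\uu_h^1,\vb_h)-\avh(\theta_h^2;\uu_h^2,\vb_h)=\avh(\theta_h^1;\ee_{\uu},\vb_h)+\bigl(\avh(\theta_h^1;\uu_h^2,\vb_h)-\avh(\theta_h^2;\uu_h^2,\vb_h)\bigr)$ and $\cvh(\uu_h^1;\uu_h^1,\vb_h)-\cvh(\uu_h^2;\uu_h^2,\vb_h)=\cvh(\uu_h^1;\ee_{\uu},\vb_h)+\cvh(\ee_{\uu};\uu_h^2,\vb_h)$, I test with $\vb_h=\ee_{\uu}$. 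Again the diagonal convective term vanishes by skew-symmetry, and coercivity of $\avh$ gives
\begin{equation*}
\avs\nu_* |\ee_{\uu}|_{1,\Omega}^2 \le |\cvh(\ee_{\uu};\uu_h^2,\ee_{\uu})| + \bigl|\avh(\theta_h^1;\uu_h^2,\ee_{\uu})-\avh(\theta_h^2;\uu_h^2,\ee_{\uu})\bigr|.
\end{equation*}
The convective term is bounded by continuity and $|\uu_h^2|_{1,\Omega}\le\cesth/\cdatah$, giving a contribution $\tfrac{\cconh\cesth}{\cdatah}|\ee_{\uu}|_{1,\Omega}^2$; the viscous difference is controlled by Lemma~\ref{lm:lipshitz} as $\cclip\clip\,|e_\theta|_{1,\Omega}\,|\uu_h^2|_{[W^1_q(\Omega_h)]^2}\,|\ee_{\uu}|_{1,\Omega}$. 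Inserting the temperature estimate for $|e_\theta|_{1,\Omega}$ and using hypothesis~ii), \eqref{eq:csolh}, to replace $\clip\,|\uu_h^2|_{[W^1_q(\Omega_h)]^2}$ by $1/\csolh$, I collect all terms proportional to $|\ee_{\uu}|_{1,\Omega}^2$ and obtain exactly $|\ee_{\uu}|_{1,\Omega}^2 \le K\,|\ee_{\uu}|_{1,\Omega}^2$ with $K$ as in \eqref{eq:uni}. Since $K<1$, this forces $\ee_{\uu}=\boldsymbol{0}$, whence $e_\theta=0$ from the temperature estimate, and uniqueness of $p_h$ follows from the inf-sup condition \eqref{eq:infsup}.

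The main obstacle is the bookkeeping of the coupling rather than any single hard estimate: the viscous Lipschitz term injects $|e_\theta|_{1,\Omega}$ into the velocity balance, so I must close the temperature estimate \emph{first} and substitute, while calibrating the a priori constant $\cesth$, the data constant $\cdatah$, and the stability constants $\avs\nu_*,\ats\kappa_*$ so that the convective and viscous contributions assemble precisely into the bracket $\bigl(1+\cclip/(\ats\kappa_*\csolh)\bigr)$. The delicate point is matching the normalisations in \eqref{eq:bound disc}, \eqref{eq:cdatah} and \eqref{eq:csolh}, together with the fact that the skew-symmetric construction of $\cvhl$ and $\cshl$ makes the diagonal convective terms vanish identically, which is what allows coercivity to be the only mechanism controlling $|\ee_{\uu}|_{1,\Omega}$ and $|e_\theta|_{1,\Omega}$.
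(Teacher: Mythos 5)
Your proposal is correct and follows essentially the same route as the paper's proof: test the subtracted heat equation with the temperature difference, use skew-symmetry of $\cshl$ and the a priori bound \eqref{eq:bound disc} with \eqref{eq:cdatah} to control $|e_\theta|_{1,\Omega}$ by $|\ee_{\uu}|_{1,\Omega}$, then feed this into the velocity balance via Lemma~\ref{lm:lipshitz} and \eqref{eq:csolh} to obtain the contraction with constant $K$. The only (immaterial) difference is which of the two solutions carries the a priori bound and the $W^1_q$ regularity in each splitting — the paper uses $\theta_{h,1}$ and $\uu_{h,1}$ where you use $\theta_h^2$ and $\uu_h^2$ — and the constants assemble into \eqref{eq:uni} exactly as you describe.
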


\begin{proof}
We limit to sketch the proof since it follows the guidelines of the proof of \cite[Proposition 2.3]{Bernardi:2015}.
Let $(\uu_{h,i}, \theta_{h,i}) \in \ZDG \times \WDDG$ with $i=1,2$ two solutions of Problem \eqref{eq:pb vem ker} and let
\[
\ddd_h := \uu_{h,1} - \uu_{h,2} \in \ZDG \,,
\qquad
\zeta_h := \theta_{h,1} - \theta_{h,2} \in \WDZG \,.
\]
Then from \eqref{eq:pb vem ker}, employing the skew-symmetry of the convective forms $\cvh$ and $\csh$, we infer
\begin{equation}
\label{eq:prpuni1}
\ash(\zeta_h, \zeta_h) = - \csh(\ddd_h; \theta_{h,1}, \zeta_h)   \,,
\end{equation}
and
\begin{equation}
\label{eq:prpuni2}
\avh(\theta_{h,2} , \, \ddd_h, \ddd_h)  = - \cvh(\ddd_h; \, \uu_{h,1}, \ddd_h)
-\left(\avh(\theta_{h,1}, \, \uu_{h,1}, \ddd_h)  
- \avh(\theta_{h,2}, \, \uu_{h,1}, \ddd_h) \right)
  \,.
\end{equation}
Recalling properties \textbf{(P7)}, from \eqref{eq:prpuni1} we derive
\[
\beta_* \kappa_* \vert \zeta_h \vert_{1, \Omega}^2 
\leq 
\cconh \vert \ddd_h \vert_{1, \Omega} 
\Vert \theta_{h,1} \Vert_{1, \Omega} 
\vert \zeta_h \vert_{1, \Omega} \,.
\]
Thus recalling that $(\uu_{h,1}, \theta_{h,1})$ satisfies bound \eqref{eq:bound disc}, from \eqref{eq:cdatah}
\begin{equation}
\label{eq:prpuni3}
\vert \zeta_h \vert_{1, \Omega} \leq 
\frac{\cconh \cesth}{\cdatah \beta_* \kappa_*} 
\vert \ddd_h \vert_{1, \Omega}  \,.
\end{equation}
In a similar way, employing properties \textbf{(P7)} and Lemma \ref{lm:lipshitz}, from \eqref{eq:prpuni2} we infer
\[
\alpha_* \nu_* \vert \ddd_h \vert_{1, \Omega}^2 
\leq 
\cconh \vert \uu_{h,1} \vert_{1, \Omega} 
\vert \ddd_h \vert_{1, \Omega}^2 
+  \cclip \clip 
\vert \uu_{h,1} \vert_{[W^1_q(\Omega_h)]^2} 
\vert \zeta_h \vert_{1, \Omega} 
\vert \ddd_h \vert_{1, \Omega}\,.
\]
Using again \eqref{eq:bound disc} and \eqref{eq:cdatah}, 
from \eqref{eq:csolh} and \eqref{eq:prpuni3} we derive
\[
\alpha_* \nu_* \vert \ddd_h \vert_{1, \Omega}^2 \leq 
\frac{\cconh \cesth}{\cdatah } 
 \vert \ddd_h \vert_{1, \Omega}^2 +
\frac{\cclip}{\csolh } 
\frac{\cconh \cesth}{\cdatah \beta_* \kappa_*} 
\vert \ddd_h \vert_{1, \Omega} ^2
\,,
\]
that is
\[
\vert \ddd_h \vert_{1, \Omega}^2 \leq  K  \, \vert \ddd_h \vert_{1, \Omega}^2 \,.
\]
Now the proof follows from \eqref{eq:uni}.
\end{proof}

\section{Convergence results: \textit{a priori} analysis}
\label{sec:conv}

In present section we derive the rate of convergence for the proposed VEM scheme \eqref{eq:pb vem}.
We preliminary recall some optimal approximation properties for the discrete temperatures spaces 
$\WDZG$, $\WDDG$ and the velocities space $\VDG$ that can be found in \cite{Sutton:2018,BV:2022} and \cite{BLV:2018} respectively.

\begin{lemma}[Approximation property of $\WDZG$, $\WDDG$]
\label{lm:int WDG}
Under the assumption \cfan{(A1)} for any $\tau \in \Wz \cap H^{\reg+1}(\Omega_h)$ (resp. $\Wd \cap H^{\reg+1}(\Omega_h)$) there exists $\tau_{I} \in \WDZG$ (resp. $\WDDG$) such that for all $E \in \Omega_h$ it holds
\[
\|\tau - \tau_{I}\|_{0,E} + h_E |\tau- \tau_{I}|_{1, E} \lesssim h_E^{\reg+1} |\tau|_{\reg+1,E} 
\]
where $0 < \reg \leq k$.
\end{lemma}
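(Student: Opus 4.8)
The plan is to establish the interpolation estimate via the standard VEM strategy: reduce the global estimate to local estimates on each element $E$, and on each element combine a polynomial approximation of $\tau$ (controlled by the Bramble--Hilbert Lemma \eqref{eq:bramble}) with the stability of the degrees of freedom of the enhanced nodal space $\WDL$. First I would recall that the natural interpolation operator $\mathcal{I}^E \colon H^{\reg+1}(E) \to \WDL$ is the one defined by matching the degrees of freedom $\mathbf{D_{\boldsymbol\Sigma}1}$--$\mathbf{D_{\boldsymbol\Sigma}3}$ of property \textbf{(P5)}; since $\reg+1 > 1$ in two dimensions, the point evaluations and edge/moment functionals are well defined on $H^{\reg+1}(E)$. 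The resulting $\tau_I$ belongs to $\WDG$ (resp.\ $\WDZG$ or $\WDDG$) because the degrees of freedom are continuous across shared edges, and matching the boundary nodal values of $\tau$ reproduces the correct trace (for $\tau \in \Wz$ one gets zero trace, for $\tau \in \Wd$ one gets the nodal interpolant of the Dirichlet datum).

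The core local estimate I would prove is the standard Clément/Scott--Dupont-type bound
\[
\|\tau - \mathcal{I}^E \tau\|_{0,E} + h_E |\tau - \mathcal{I}^E \tau|_{1,E} \lesssim h_E^{\reg+1}|\tau|_{\reg+1,E}.
\]
The mechanism is to insert an arbitrary polynomial $q \in \Pk_k(E)$ (in practice $q = \Pi^{0,E}_k \tau$ or a suitable averaged Taylor polynomial) and use the triangle inequality together with the polynomial invariance of the interpolation operator, namely $\mathcal{I}^E q = q$, which holds by the polynomial inclusion \textbf{(P4)} $\Pk_k(E) \subseteq \WDL$. This gives
\[
\tau - \mathcal{I}^E \tau = (\tau - q) - \mathcal{I}^E(\tau - q),
\]
so it remains to bound $\mathcal{I}^E(\tau - q)$ in the $H^1$- and $L^2$-seminorms. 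For this I would establish a \emph{stability} estimate for $\mathcal{I}^E$, expressing $|\mathcal{I}^E w|_{1,E}$ and $\|\mathcal{I}^E w\|_{0,E}$ in terms of the degree-of-freedom vector of $w$, then controlling each degree of freedom by local Sobolev norms of $w$: the vertex and edge point values are controlled by trace inequalities and the inverse estimate \eqref{eq:inverse}, while the internal moments in $\mathbf{D_{\boldsymbol\Sigma}3}$ are controlled directly by $\|w\|_{0,E}$. Choosing $w = \tau - q$ and invoking the Bramble--Hilbert bound \eqref{eq:bramble} for all the relevant norms of $\tau - q$ then closes the local estimate with the optimal power of $h_E$; the mesh assumption \textbf{(A1)} (star-shapedness and minimal edge length) is what makes all the hidden constants depend only on $\rho$ and $k$, uniformly in $E$.

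The main obstacle I expect is the stability of the degree-of-freedom functionals of the enhanced space, rather than any single estimate: one must verify that a function in $\WDL$ is controlled by its degrees of freedom with constants depending only on the shape-regularity $\rho$ and the order $k$, and in particular handle the nonstandard enhancement constraint $(iii)$ in \eqref{eq:w-loc} that defines $\WDL$. Concretely, the subtle point is bounding $\|\mathcal{I}^E w\|_{0,E}$ by the degree-of-freedom vector: because the enhancement relates the internal $L^2$-moments to the computable projection $\Pi^{\nabla,E}_k$, one needs a norm-equivalence between the discrete $\ell^2$-norm of the degrees of freedom and a genuine function-space norm on the virtual space, a result of the type available in \cite{projectors,Sutton:2018,BV:2022}. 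Since the statement explicitly cites those references for the approximation property, I would invoke the scaled norm-equivalence proved there as a black box and focus the argument on the polynomial-invariance plus Bramble--Hilbert combination described above, finally summing the squared local estimates over all $E \in \Omega_h$ to obtain the broken global bound.
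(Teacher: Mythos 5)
The paper does not actually prove this lemma: it is stated as a recalled result, with the proof deferred entirely to \cite{Sutton:2018,BV:2022} (and \cite{projectors} for the underlying space). Your outline --- DoF-interpolation, polynomial invariance from \textbf{(P4)}, Bramble--Hilbert, and stability of the degree-of-freedom functionals under the mesh assumption \textbf{(A1)} --- is precisely the standard argument carried out in those references, and you correctly isolate the one genuinely delicate ingredient (the scaled norm equivalence between the DoF vector and the $H^1$-norm on the enhanced space, which handles constraint $(iii)$ in \eqref{eq:w-loc}) and defer it to the same sources the paper cites. So your proposal is consistent with, and no less complete than, what the paper itself provides.
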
 

\begin{lemma}[Approximation property of $\VDG$]
\label{lm:int VDG}
Under the assumption \cfan{(A1)} for any $\vb \in \VCG \cap [H^{\reg+1}(\Omega_h)]^2$ there exists $\vbi \in \VDG$ such that for all $E \in \Omega_h$ it holds
\[
\|\vb - \vbi\|_{0,E} + h_E |\vb- \vbi|_{1, E} \lesssim h_E^{\reg+1} |\vb|_{\reg+1,E} 
\]
where $0 < \reg \leq k$.
Furthermore if $\vb \in \ZCG$ it holds $\vbi \in \ZDG$.
\end{lemma}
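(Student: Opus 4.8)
The plan is to build $\vbi$ as the canonical degrees-of-freedom interpolant, split the error through the polynomial inclusion \textbf{(P1)}, and handle the divergence-free preservation via the commuting structure of the DoFs. First I would define $\vbi$ elementwise: on each $E\in\Omega_h$ let $\vbi|_E\in\VDL$ be the unique element sharing with $\vb$ all the degrees of freedom $\mathbf{D_{\boldsymbol V}}$ of \textbf{(P2)}. Unisolvence of $\mathbf{D_{\boldsymbol V}}$ makes this well posed, and the regularity $\vb\in[H^{\reg+1}(\Omega_h)]^2$ with $\reg\ge 1$ guarantees that the vertex and edge point values as well as the interior moments of $\vb$ are meaningful. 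Since the vertex and edge DoFs $\mathbf{D_{\boldsymbol V}1}$--$\mathbf{D_{\boldsymbol V}2}$ are shared by neighbouring elements, $\vbi$ is single valued on interelement boundaries and hence globally $C^0$, so $\vbi\in\VDG\subset\VCG$.

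The approximation bound is obtained on a fixed element by polynomial reproduction. By \textbf{(P1)} the interpolation satisfies $\boldsymbol p_I=\boldsymbol p$ for every $\boldsymbol p\in[\Pk_k(E)]^2$, so by linearity $\vb-\vbi=(\vb-\boldsymbol p)-(\vb-\boldsymbol p)_I$. Taking $\boldsymbol p=\P0\vb$ and applying the Bramble--Hilbert estimate \eqref{eq:bramble} (with $n=k\ge\reg$) gives $|\vb-\boldsymbol p|_{1,E}\lesssim h_E^{\reg}|\vb|_{\reg+1,E}$ and $\|\vb-\boldsymbol p\|_{0,E}\lesssim h_E^{\reg+1}|\vb|_{\reg+1,E}$. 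The remaining term $(\vb-\boldsymbol p)_I$ is controlled through a stability bound for the interpolation operator, namely $|\boldsymbol w_I|_{1,E}\lesssim |\boldsymbol w|_{1,E}+h_E^{-1}\|\boldsymbol w\|_{0,E}$ together with the analogous $L^2$ estimate, valid for $\boldsymbol w\in[H^{\reg+1}(E)]^2$. I expect this stability estimate to be the main obstacle: it is proved by passing to a reference configuration using the shape regularity \textbf{(A1)}, invoking the norm equivalence between $|\cdot|_{1,E}$ on $\VDL$ and the suitably scaled Euclidean norm of the DoF vector, and then bounding each functional of $\mathbf{D_{\boldsymbol V}}$ applied to $\boldsymbol w$ by $\|\boldsymbol w\|_{0,E}$ and $|\boldsymbol w|_{1,E}$ through trace and inverse estimates \eqref{eq:inverse}; the nonstandard functionals (the interior moments against $\boldsymbol{m}^{\perp}m_{\boldsymbol\alpha}$ in $\mathbf{D_{\boldsymbol V}3}$ and the divergence moments $\mathbf{D_{\boldsymbol V}4}$) demand careful tracking of the powers of $h_E$. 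Combining the two contributions yields $\|\vb-\vbi\|_{0,E}+h_E|\vb-\vbi|_{1,E}\lesssim h_E^{\reg+1}|\vb|_{\reg+1,E}$.

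Finally, for the divergence-free preservation I would use that $\dd\vbi\in\Pk_{k-1}(E)$ by property $(ii)$ of \eqref{eq:v-loc}, so it suffices to establish the commuting relation $\dd\vbi=\Pp0(\dd\vb)$, i.e. that every moment of $\dd\vbi$ against $\Pk_{k-1}(E)$ agrees with the corresponding moment of $\dd\vb$. For a monomial $m_{\boldsymbol\alpha}$ with $|\boldsymbol\alpha|>0$ this is exactly the matching of the divergence DoF $\mathbf{D_{\boldsymbol V}4}$ between $\vbi$ and $\vb$; for the constant moment, the divergence theorem reduces the identity to equality of the normal fluxes $\int_{\partial E}\vbi\cdot\nn=\int_{\partial E}\vb\cdot\nn$ across $\partial E$. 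This flux identity is the delicate point (the edge DoFs must capture the normal flux so that the constant divergence moment is reproduced), and it is precisely what the construction in \cite{BLV:2018} secures. Granting the commuting relation, if $\vb\in\ZCG$, i.e. $\dd\vb=0$, then $\dd\vbi=\Pp0(0)=0$; recalling \eqref{eq:z-glo} and that $\QDG$ consists of piecewise $\Pk_{k-1}$ functions, this gives $b(\vbi,q_h)=\int_\Omega q_h\,\dd\vbi\,{\rm d}\Omega=0$ for all $q_h\in\QDG$, that is $\vbi\in\ZDG$, which completes the proof.
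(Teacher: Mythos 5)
The paper does not actually prove this lemma: it is recalled from \cite{BLV:2018} (see the sentence preceding Lemma \ref{lm:int WDG}), so your proposal has to be measured against the standard construction in that reference. The approximation estimate part of your argument (DoF interpolant, polynomial reproduction via \textbf{(P1)}, Bramble--Hilbert, and the $H^1$/$L^2$ stability of the interpolation operator obtained from the DoF--norm equivalence under \textbf{(A1)}) is indeed the standard route and is sound modulo the technical stability lemma you correctly identify as the main labour. (Minor point: your restriction $\reg\ge 1$ is unnecessary and would not cover the stated range $0<\reg\le k$; in two dimensions $H^{1+\reg}(E)\hookrightarrow C^0(\overline E)$ for every $\reg>0$, so the pointwise DoFs are meaningful throughout.)

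The genuine gap is in the last step. With the edge degrees of freedom $\mathbf{D_{\boldsymbol V}1}$--$\mathbf{D_{\boldsymbol V}2}$ given by \emph{point values}, the plain DoF interpolant does \emph{not} satisfy the flux identity $\int_{\partial E}\vbi\cdot\nn_E\,{\rm d}s=\int_{\partial E}\vb\cdot\nn_E\,{\rm d}s$: on each edge $e$, $\vbi|_e$ is the Lagrange interpolant of $\vb$ at $k+1$ points, and Lagrange interpolation does not preserve the integral of a non-polynomial function (not even at Gauss--Lobatto nodes, where one only recovers the quadrature approximation of $\int_e\vb\cdot\nn_E$). Consequently the constant moment of $\dd\,\vbi$ is not reproduced, the commuting relation $\dd\,\vbi=\Pp0(\dd\,\vb)$ fails for the interpolant you defined, and the implication $\vb\in\ZCG\Rightarrow\vbi\in\ZDG$ does not follow. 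Deferring the flux identity to ``the construction in \cite{BLV:2018}'' hides the fact that that construction uses a \emph{different} interpolant: the restriction of $\vbi$ to each edge is fixed by matching the vertex values and the edge moments against $\Pk_{k-2}(e)$ (equivalently, $\vbi\in\VDL$ is chosen so that $\dd\,\vbi=\Pp0(\dd\,\vb)$ by construction), which does preserve $\int_e\vbi\cdot\nn_E$ since $\nn_E$ is constant on each edge and $k\ge2$, and still yields the same optimal error bounds. Your proof becomes correct once the edge interpolation is replaced by this moment-preserving variant; as written, the object you constructed does not satisfy the final claim of the lemma.
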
 


In order to derive the error estimates, we state the following assumption on the solutions and on the data of Problem \eqref{eq:pb primale}.

\smallskip\noindent
\textbf{(A2) Regularity assumptions:} 
\begin{itemize}
\item the solution $(\uu, p, \theta)$ satisfy
$\uu \in [H^{\reg+1}(\Omega_h)]^2$,
$p \in [H^{\reg}(\Omega_h)]$,
$\theta \in [H^{\reg+1}(\Omega_h)]$,

\item the viscosity $\nu$, the conductivity $\kappa$ satisfy 
$\nu(\theta) \in W^{\reg}_\infty(\Omega_h)$,
$\kappa \in W^{\reg}_\infty(\Omega_h)$,

\item the external loads $\ff$ and $g$ satisfy 
$\ff \in [H^{\reg+1}(\Omega_h)]^2$,
$g \in [H^{\reg+1}(\Omega_h)]$.

\end{itemize}
for some $0 < \reg \leq k$.

In the following $\mathcal{R}(\cdot)$ will denote a generic constant depending on the norms of $\uu$, $p$, $\theta$, $\nu$, $\kappa$, $\ff$ and $g$ in the aforementioned spaces.

\begin{proposition}
\label{prp:conv}
Under the assumptions \cfan{(A0)}, \cfan{(A1)} and under the assumptions of Theorem \ref{thm:uni} and Proposition \ref{prp:uni}, let $(\uu, p, \theta)$ be the solution of \eqref{eq:pb variazionale} and $(\uu_h, p_h, \theta_h)$ be the solution of \eqref{eq:pb vem}.
Assuming moreover \cfan{(A2)} the following error estimates hold:
\[
\begin{gathered}
\vert \uu_h - \uu\vert_{1, \Omega} + 
\Vert \theta - \theta_h \Vert_{1, \Omega} \leq \mathcal{R}(\uu, \theta, \nu, \kappa) \, h^\reg \,,
\\
\Vert p - p_h\Vert_{0, \Omega}  \leq \mathcal{R}(\uu, p, \theta, \nu, \kappa) \, h^\reg \,.
\end{gathered}
\]

\end{proposition}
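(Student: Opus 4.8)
The plan is to derive the error estimates via the standard coupled strategy: introduce suitable interpolants, split the errors, and bound each contribution using the stability properties \textbf{(P7)}, the approximation results in Lemmas \ref{lm:int WDG} and \ref{lm:int VDG}, the Lipschitz Lemma \ref{lm:lipshitz}, and the smallness/uniqueness assumptions that guarantee the constant $K$ in \eqref{eq:uni} is strictly less than one. First I would set $\uui \in \ZDG$ the interpolant of $\uu$ (which lies in the discrete kernel by Lemma \ref{lm:int VDG}) and $\theta_I \in \WDDG$ the interpolant of $\theta$ from Lemma \ref{lm:int WDG}, and define the discrete error components
\[
\ee_V := \uu_h - \uui \in \ZDG \,, \qquad \ee_T := \theta_h - \theta_I \in \WDZG \,.
\]
The triangle inequality then reduces the velocity and temperature estimates to bounding $|\ee_V|_{1,\Omega}$ and $|\ee_T|_{1,\Omega}$, since the interpolation errors $|\uu - \uui|_{1,\Omega}$ and $\|\theta - \theta_I\|_{1,\Omega}$ are already $\OO(h^\reg)$.

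Next I would write the error equations. Testing the kernel VEM problem \eqref{eq:pb vem ker} against $\ee_V$ and $\ee_T$ and subtracting the continuous kernel problem \eqref{eq:pb variazionale ker} (restricted to the discrete test functions), I obtain identities for $\avh(\theta_h;\uu_h,\ee_V)$ and $\ash(\theta_h,\ee_T)$ in terms of the consistency errors. The right-hand sides collect several groups of terms: (a) the differences between the continuous and discrete bilinear/trilinear forms applied to the interpolants (handled by the polynomial consistency of $\avhl,\ashl,\cvhl,\cshl$ together with the Bramble–Hilbert estimate \eqref{eq:bramble} and the stability bounds \eqref{eq:stab}); (b) the load-approximation errors $(\ff-\ff_h,\ee_V)$ and $(g-g_h,\ee_T)$ controlled by \eqref{eq:forma fh gh} and \eqref{eq:bramble}; and (c) the genuinely coupled terms arising from the temperature dependence of the viscosity, namely $\avh(\theta_h;\uui,\ee_V)-\avh(\theta_I;\uui,\ee_V)$, which is exactly where Lemma \ref{lm:lipshitz} enters to produce a factor $\cclip\clip|\ee_T|_{1,\Omega}\,|\uui|_{[W^1_q(\Omega_h)]^2}\,|\ee_V|_{1,\Omega}$. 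Using coercivity from \textbf{(P7)}, the skew-symmetry of $\cvh$ and $\csh$ to annihilate the problematic cubic convective self-terms, and absorbing the small convective contributions via the bounds \eqref{eq:csolh} and \eqref{eq:bound disc}, I arrive at a coupled linear system of two inequalities in $|\ee_V|_{1,\Omega}$ and $|\ee_T|_{1,\Omega}$.

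The key algebraic step is to decouple this system. I would first bound $|\ee_T|_{1,\Omega}$ in terms of $|\ee_V|_{1,\Omega}$ plus an $\OO(h^\reg)$ consistency remainder (mirroring \eqref{eq:prpuni3}), then substitute into the velocity inequality. The coefficient multiplying $|\ee_V|_{1,\Omega}^2$ on the right is precisely the quantity $K$ from \eqref{eq:uni}; since the hypotheses ensure $K<1$, the self-term is absorbed to the left, yielding $|\ee_V|_{1,\Omega}\lesssim \mathcal{R}\,h^\reg$, and back-substitution gives the same rate for $|\ee_T|_{1,\Omega}$. This establishes the first displayed estimate. For the pressure I would use the discrete inf-sup condition \eqref{eq:infsup}: the momentum error equation expresses $b(\vb_h, p_h - \ppi)$, for an interpolant $\ppi$ of $p$, in terms of the already-controlled velocity, convective, viscosity and load errors, and dividing by $|\vb_h|_{1,\Omega}$ and taking the supremum delivers $\|p_h-\ppi\|_{0,\Omega}\lesssim \mathcal{R}\,h^\reg$; a triangle inequality with the polynomial approximation of $p$ closes the second estimate.

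The main obstacle will be the coupled viscosity term in group (c). Because $\nu$ depends on the discrete temperature, the viscous form is nonlinear in $\theta_h$, and the natural bound requires the $W^1_q$-regularity of the velocity interpolant with $q>2$ so that Lemma \ref{lm:lipshitz} applies; one must verify that $|\uui|_{[W^1_q(\Omega_h)]^2}$ is controlled by $|\uu|_{[W^1_q(\Omega_h)]^2}$ (an $L^q$-stable interpolation estimate, consistent with the regularity \textbf{(A2)}) and that the resulting cross-term can indeed be folded into the constant $K$ without destroying the contraction $K<1$. Keeping careful track of which constants ($\alpha_*\nu_*$, $\beta_*\kappa_*$, $\cconh$, $\cclip$, $\cesth$) appear so that the absorption is legitimate is the delicate bookkeeping that makes the argument work.
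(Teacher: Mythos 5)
Your proposal is correct and follows essentially the same route as the paper: interpolants from Lemmas \ref{lm:int WDG} and \ref{lm:int VDG}, coercivity-based error equations split into consistency, convective, load and viscosity-coupling contributions (the last handled via the Lipschitz estimate of Lemma \ref{lm:lipshitz}), substitution of the temperature bound into the velocity bound with absorption of the self-term, and the discrete inf-sup condition \eqref{eq:infsup} for the pressure. Two minor bookkeeping differences: the paper keeps the exact solution $\uu$ (not the interpolant $\uui$) in the $W^1_q$-weighted viscosity term, so no $L^q$-stability of the interpolant is needed, and the absorbed coefficient is not literally the $K<1$ of \eqref{eq:uni} --- the paper has to add the extra assumption that $\nu_*$ and $\kappa_*$ are large enough at that step.
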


\begin{proof}
We preliminary observe that under the assumptions of Theorem \ref{thm:uni} (resp. Proposition \ref{prp:uni}) and recalling \eqref{eq:bound cont} (resp. \eqref{eq:bound disc}) the following hold
\begin{equation}
\label{eq:bounds}
\vert \uu \vert_{1, \Omega}^2 + \Vert \theta  \Vert_{1, \Omega}^2 \leq 
\frac{\cest^2}{\cdata^2}\,,
\qquad \qquad
\vert \uu_h \vert_{1, \Omega}^2 + \Vert \theta_h  \Vert_{1, \Omega}^2 \leq 
\frac{\cesth^2}{\cdatah^2} \,.
\end{equation}
The proof follows the following steps. 

\noindent
\textbf{Step 1. Interpolation error estimates.}

\noindent
Let us introduce the following error quantities
\[
\begin{aligned}
\eei &:= \uu - \uui\,,
\qquad  &
\si &:= \theta - \theta_{I}\,,
\qquad  &
\rho_{I} &:= p - p_{I}\,,
\\
\ee_h &:= \uui - \uu_h\,,
\qquad  &
\sigma_h &:= \theta_{I} - \theta_h\,,
\qquad  &
\rho_h &:= p_{I} - p_h\,,
\end{aligned}
\]
where $\theta_{I}$ and $\uui$ are the interpolant function of $\theta$ and $\uu$ of Lemma \ref{lm:int WDG} and Lemma \ref{lm:int VDG} respectively and $p_{I} \in \QDG$ is defined elementwise by $p_{I}|_E = \Pi^{0,E}_{k-1} p$ for any $E \in \Omega_h$ (cf. \eqref{eq:P0_k^E}). Therefore from Lemma \ref{lm:int VDG}, Lemma \ref{lm:int WDG} and Bramble-Hilbert Lemma \eqref{eq:bramble}, we have
\begin{equation}
\label{eq:int-errors}
\vert \eei\vert_{1 ,\Omega} \leq \mathcal{R}(\uu) \, h^\reg \,,
\qquad
\Vert \si\Vert_{1 ,\Omega} \leq \mathcal{R}(\theta) \, h^\reg \,,
\qquad
\Vert \rho_{I}\Vert_{1 ,\Omega} \leq \mathcal{R}(p) \, h^\reg \,.
\end{equation}
Notice now that since $\uu \in \ZCG$, from Lemma \ref{lm:int VDG} it holds that $\ee_h \in \ZDG$.
Whereas from Lemma \ref{lm:int WDG},  $\sigma_h \in \WDZG$.

\noindent
\textbf{Step 2. Error equation for the temperature.}

\noindent
Employing Property \textbf{(P7)} and manipulating the second equation in \eqref{eq:pb variazionale ker} and \eqref{eq:pb vem ker} we obtain
\begin{equation}
\label{eq:conv1}
\begin{aligned}
\beta_* \kappa_* \vert \sigma_h \vert_{1, \Omega}^2 
&\leq
\ash(\sigma_h, \sigma_h) =
\ash(\theta_{I}, \sigma_h) - 
\ash(\theta_h, \sigma_h)
\\
& =
\left( 
\ash(\theta_{I}, \sigma_h) - \as(\theta, \sigma_h)
\right)
+
\left( 
\as(\theta, \sigma_h) - \ash(\theta_h, \sigma_h)
\right)
\\
& =
\left( 
\ash(\theta_{I}, \sigma_h) - \as(\theta, \sigma_h)
\right) 
+
\left( 
\csh(\uu_h; \, \theta_h, \sigma_h) - \csskew(\uu; \, \theta, \sigma_h)
\right)
+
(g - g_h, \sigma_h) 
\\
& =: \eta_{a, T} + \eta_{c, T} + \eta_g \,.
\end{aligned}
\end{equation}
We now estimate the three terms above.

\noindent
$\bullet$  estimate of $\eta_{a, T}$: we split $\eta_{a, T}$ into local contributions $\eta_{a, T}^E$. Recalling definitions \eqref{eq:forma as} and \eqref{eq:forma ashl} and the property of the $L^2$-projection we have
\[
\begin{aligned}
\eta_{a, T}^E &:= 
\int_{E} \kappa \, \Pp0\nabla \theta_{I} \cdot \Pp0 \nabla \sigma_h \, {\rm d}E 
+ \ssl(\theta_{I} , \, \sigma_h)  -
\int_{E} \kappa \, \nabla \theta \cdot \nabla \sigma_h \, {\rm d}E 
\\
& =
\int_{E} \kappa \, (\Pp0\nabla \theta_{I} - \nabla \theta)\cdot \Pp0 \nabla \sigma_h \, {\rm d}E 
- \int_{E} \kappa \, \nabla \theta \cdot (I - \Pp0)\nabla \sigma_h \, {\rm d}E 
+ \ssl(\theta_{I} , \, \sigma_h) 
\\
& =
\int_{E} \kappa \, (\Pp0\nabla \theta_{I} - \nabla \theta)\cdot \Pp0 \nabla \sigma_h \, {\rm d}E 
- \int_{E} (I - \Pp0) (\kappa \, \nabla \theta) \cdot \nabla \sigma_h \, {\rm d}E 
+ \ssl(\theta_{I} , \, \sigma_h) \,.
\end{aligned}
\]
Then from Property \textbf{(P0)}, \eqref{eq:ssstab} and \eqref{eq:stab}, employing Bramble-Hilbert Lemma \eqref{eq:bramble}, we obtain 
\[
\begin{aligned}
\eta_{a, T}^E & \leq
\left( 
\kappa^* \Vert \Pp0\nabla \theta_{I} - \nabla \theta\Vert_{0,E} + 
\Vert (I - \Pp0) (\kappa \, \nabla \theta) \Vert_{0,E} +
\kappa^* \vert (I - \P0) \theta_{I}\vert_{1, E}
\right) \vert \sigma_h \vert_{1,E}
\\
& \leq
\kappa^* 
\left( 
\vert \si\vert_{1,E} + 
\vert (I - \P0)\theta\vert_{1,E} +
\frac{1}{\kappa^*} \Vert (I - \Pp0) (\kappa \, \nabla \theta) \Vert_{0,E} +
\right) \vert \sigma_h \vert_{1,E}
\\
& \lesssim
\kappa^* h_E^{\reg}
\left(  
\vert \si\vert_{1,E} + 
\vert \theta\vert_{\reg+1,E} +
\frac{\Vert \kappa\Vert_{W^\reg_\infty}(E)}{\kappa^*}
\Vert \theta\Vert_{\reg+1,E}
\right) \vert \sigma_h \vert_{1,E} \,.
\end{aligned}
\]
Therefore summing the local contributions and employing \eqref{eq:int-errors} we obtain
\begin{equation}
\label{eq:conv2}
\eta_{a,T} \leq 
\mathcal{R}(\theta, \kappa) \, h^{\reg} \,
\vert \sigma_h \vert_{1,\Omega} \,.
\end{equation}

\noindent
$\bullet$  estimate of $\eta_{c, T}$: employing the skew-symmetry of $\csh(\cdot; \cdot, \cdot)$ (cf. \eqref{eq:forma cshlskew}) we have
\begin{equation}
\label{eq:conv3a}
\begin{aligned}
\eta_{c,T} &= 
\csh(\uu_h; \, \theta_h, \sigma_h) - 
\csh(\uu; \, \theta, \sigma_h)  + 
\csh(\uu; \, \theta, \sigma_h) - 
\csskew(\uu; \, \theta, \sigma_h) 
\\
&= 
- \csh(\uu_h; \, \si, \sigma_h)
- \csh(\uu - \uu_h; \, \theta, \sigma_h)
+ 
 \csh(\uu; \, \theta, \sigma_h) - 
\csskew(\uu; \, \theta, \sigma_h)  \,.
\end{aligned}
\end{equation}
From Property \textbf{(P7)}, bounds \eqref{eq:bounds} and \eqref{eq:int-errors} we have
\begin{equation}
\label{eq:conv3b}
\begin{aligned}
\csh(\uu_h; \, \si, \sigma_h) &+
\csh(\uu - \uu_h; \, \theta, \sigma_h) 
\leq \cconh \left(
\vert \uu_h\vert_{1, \Omega} \Vert \si\Vert_{1, \Omega} + 
\vert \uu - \uu_h\vert_{1, \Omega} \Vert \theta\Vert_{1, \Omega} 
\right) \vert \sigma_h\vert_{1, \Omega}
\\
& \leq  
\frac{\cconh \cesth}{\cdatah} 
 \Vert \si\Vert_{1, \Omega}  \vert \sigma_h\vert_{1, \Omega} + 
\frac{\cconh \cest}{\cdata} 
\left(\vert \eei\vert_{1, \Omega} + 
\vert \ee_h\vert_{1, \Omega}   
\right) \vert \sigma_h\vert_{1, \Omega}
\\
& \leq  
\frac{\cconh \cest}{\cdata} \vert\ee_h\vert_{1, \Omega}   
\vert \sigma_h\vert_{1, \Omega} +
\mathcal{R}(\uu, \theta)\, h^\reg \, \vert \sigma_h\vert_{1, \Omega} \,.
\end{aligned}
\end{equation}
Furthermore employing \cite[Lemma 4.3]{BLV:2018}
\begin{equation}
\label{eq:conv3c}
\csh(\uu; \, \theta, \sigma_h) - 
\csskew(\uu; \, \theta, \sigma_h)  \leq \mathcal{R}(\uu, \theta) \, h^{\reg} \,
\vert \sigma_h \vert_{1,\Omega} \,.
\end{equation}
Therefore collecting in \eqref{eq:conv3a}, bounds \eqref{eq:conv3b} and \eqref{eq:conv3c} we obtain
\begin{equation}
\label{eq:conv3}
\eta_{c,T} \leq 
\frac{\cconh \cest}{\cdata} \vert\ee_h\vert_{1, \Omega}   
\vert \sigma_h\vert_{1, \Omega} +
\mathcal{R}(\uu, \theta) \, h^{\reg} \,
\vert \sigma_h \vert_{1,\Omega} \,.
\end{equation}

\noindent
$\bullet$  estimate of $\eta_{g}$: employing the definition of $L^2$-projection \eqref{eq:P0_k^E} and  Bramble-Hilbert Lemma \eqref{eq:bramble} we infer
\begin{equation}
\label{eq:conv4}
\eta_g \leq \mathcal{R}(g) \, h^{\reg +2} \, \vert \sigma_h \vert_{1,\Omega} \,.
\end{equation}
Finally combining \eqref{eq:conv2}, \eqref{eq:conv3} and \eqref{eq:conv4} in \eqref{eq:conv1} we get

\begin{equation}
\label{eq:convtheta}
\vert {\sigma_h} \vert_{1, \Omega} \leq 
\frac{\cconh \cest}{\cdata \beta_* \kappa_*} \vert\ee_h\vert_{1, \Omega}   +
\mathcal{R}(\uu, \theta, \kappa) \, h^{\reg} + 
\mathcal{R}(g) \, h^{\reg +2} \,.
\end{equation}

\noindent
\textbf{Step 3. Error equation for the velocity.}

\noindent
We now analyse the velocity error equation. Combining the first equation in \eqref{eq:pb variazionale ker} and \eqref{eq:pb vem ker} together with Property \textbf{(P7)}, we have
\begin{equation}
\label{eq:conv5}
\begin{aligned}
&\alpha_* \nu_* \vert \ee_h \vert_{1, \Omega}^2 
\leq
\avh(\theta_h; \, \ee_h, \ee_h) =
\avh(\theta_h; \, \uui, \ee_h) - 
\avh(\theta_h; \, \uu_h, \ee_h)
\\
& =
\left( 
\avh(\theta_h; \, \uui, \ee_h) - \av(\theta; \,  \uu, \ee_h)
\right)
+
\left( 
\av(\theta; \,  \uu, \ee_h) - \avh(\theta_h; \, \uu_h, \ee_h)
\right)
\\
& =
\left( 
\avh(\theta_h; \, \uui, \ee_h) - \av(\theta; \,  \uu, \ee_h)
\right)
+
\left( 
\cvh(\uu_h; \, \uu_h, \ee_h) - \cvskew(\uu; \, \uu, \ee_h)
\right)
+
(\ff - \ff_h, \ee_h) 
\\
& =: \eta_{a, V} + \eta_{c, V} + \eta_{\ff} \,.
\end{aligned}
\end{equation}
We now estimate each term above.

\noindent
$\bullet$  estimate of $\eta_{a, V}$: we split the term into local contributions $\eta_{a, V}^E $. Recalling definitions \eqref{eq:forma av} and \eqref{eq:forma avhl} we have
\begin{equation}
\label{eq:conv6a}
\begin{aligned}
\eta_{a, V}^E &:= 
\int_{E} \nu(\P0 \theta_h) \, \PP0\gr \uui \cdot \PP0 \gr \ee_h \, {\rm d}E 
+ \svl(\theta_h; \, \uui , \, \ee_h)  -
\int_{E} \nu(\theta) \, \gr \uu \cdot \gr \ee_h \, {\rm d}E
\\
& =
\int_{E} \nu(\P0 \theta_h) \, (\PP0\gr \uui - \gr \uu) \cdot \PP0 \gr \ee_h \, {\rm d}E 
+ \svl(\theta_h; \, \uui , \, \ee_h) +
\\
& \quad
- \int_{E} \nu(\theta) \, \gr \uu \cdot (I - \PP0)\gr \ee_h \, {\rm d}E 
+ 
\int_{E} (\nu(\P0 \theta_h) - \nu(\theta)) \, \gr \uu \cdot \PP0 \gr \ee_h \, {\rm d}E
\\
& =: \zeta_1^E + \zeta_2^E + \zeta_3^E + \zeta_4^E \,.
\end{aligned}
\end{equation}
Recalling Property \textbf{(P0)}, the stability bounds \eqref{eq:svstab} and \eqref{eq:stab}, and the definition of $L^2$-projection \eqref{eq:P0_k^E}, the first three sub-terms can be bounded as follows
\[
\begin{aligned}
\sum_{i=1}^3\zeta_i^E &\leq 
\bigl(
\nu^* \Vert \PP0\gr \uui - \gr \uu\Vert_{0,E} +
\Vert (I - \PP0)(\nu(\theta)\gr \uu)\Vert_{0,E} +
\nu^* \vert ( I - \P0) \uui\vert_{1,E}
\bigr)
\vert \ee_h  \vert_{1,E}
\\
& \leq
\nu^* 
\left( 
\vert  \eei\vert_{1,E} + 
{\vert (I - \P0) \uu\vert_{1,E}} +
\frac{1}{\nu^*} \Vert (I - \P0) (\nu(\theta) \, \gr \uu) \Vert_{0,E} 
\right) \vert \ee_h \vert_{1,E}
\\
& \lesssim
\nu^* h_E^{\reg}
\left(  
\vert  \eei\vert_{1,E} + 
\vert \uu\vert_{\reg+1,E} +
\frac{\Vert \nu(\theta)\Vert_{W^\reg_\infty}(E)}{\nu^*}
\Vert \uu\Vert_{\reg+1,E}
\right) \vert \ee_h \vert_{1,E} \,.
\end{aligned}
\]
Therefore
\begin{equation}
\label{eq:conv6b}
\begin{aligned}
\sum_{E \in \Omega_h}\sum_{i=1}^3\zeta_i^E &
\leq 
\mathcal{R}(\uu, \nu) \, h^\reg \, 
\vert \ee_h \vert_{1, \Omega} \,.
\end{aligned}
\end{equation}
For the term $\zeta_4$, employing similar computations to that in \eqref{eq:lmlip3} (recalling that $p$ is such that $1/p + 1/q =1/2)$ we infer
\[
\begin{aligned}
\zeta_4^E &\leq
\clip \Vert \P0 \theta_h - \theta\Vert_{L^p(E)} 
\vert \uu \vert_{[W^1_q(E)]^2} \vert\ee_h \vert_{1,E}
\\
& \leq
\clip \left( 
\Vert \P0 (\theta_h - \theta)\Vert_{L^p(E)} +
\Vert (I - \P0) \theta\Vert_{L^p(E)}
\right)
\vert \uu \vert_{[W^1_q(E)]^2} \vert\ee_h \vert_{1,E} 
\\
& \lesssim
\clip \left( 
\Vert \theta_h - \theta\Vert_{L^p(E)} +
\Vert \theta\Vert_{W^\reg_p(E)} \, h_E^\reg
\right)
\vert \uu \vert_{[W^1_q(E)]^2} \vert\ee_h \vert_{1,E} 
\\
& \lesssim
\clip \left( 
\Vert \sigma_h\Vert_{L^p(E)} +\Vert \si\Vert_{L^p(E)} +
\Vert \theta\Vert_{W^\reg_p(E)} \, h_E^\reg
\right)
\vert \uu \vert_{[W^1_q(E)]^2} \vert\ee_h \vert_{1,E} 
\end{aligned}
\]
where we also used the continuity of the $L^2$-projection w.r.t. the $L^p$-norm and Bramble-Hilbert \eqref{eq:bramble}.
From the H\"older inequality for sequences and the Sobolev embedding \eqref{eq:sobemb}, we infer
\[
\sum_{E \in \Omega_h} \zeta_4^E
\lesssim
\clip \left( 
\vert \sigma_h\vert_{1, \Omega} +\Vert \si\Vert_{1,\Omega} +
\mathcal{R}(\theta) h^\reg
\right)
\vert \uu \vert_{[W^1_q(\Omega_h)]^2} \vert\ee_h \vert_{1,\Omega} \,.
\]
Therefore from \eqref{eq:int-errors} and \eqref{eq:csol}
\begin{equation}
\label{eq:conv6c}
\begin{aligned}
\sum_{E \in \Omega_h} \zeta_4^E
& \leq
\frac{C_{\rm lip}}{\csol} \vert \sigma_h\vert_{1, \Omega} \vert\ee_h \vert_{1,\Omega}  +
\mathcal{R}(\theta) \, h^\reg \,
 \vert\ee_h \vert_{1,\Omega} 
\end{aligned}
\end{equation}
for a suitable positive constant $C_{\rm lip}$.
Collecting \eqref{eq:conv6a} and \eqref{eq:conv6b} in \eqref{eq:conv6c} we obtain
\begin{equation}
\label{eq:conv6}
\begin{aligned}
\eta_{a,V} & \leq 
\frac{C_{\rm lip}}{\csol} \vert \sigma_h \vert_{1, \Omega}
\vert \ee_h\vert_{1, \Omega} +
\mathcal{R}(\uu, \theta, \nu) \, h^\reg \, 
\vert \ee_h\vert_{1, \Omega}   \,.
\end{aligned}
\end{equation}

\noindent
$\bullet$  estimate of $\eta_{c,V}$:  direct application of \cite[Lemma 4.3 and Lemma 4.4]{BLV:2018} yields
\begin{equation}
\label{eq:conv7}
\eta_{c,V} \leq 
\frac{\cconh \cesth}{\cdatah} \vert\ee_h\vert_{1, \Omega}^2 +
\mathcal{R}(\uu) \, h^{\reg} \,
\vert \ee_h \vert_{1,\Omega} \,.
\end{equation}

\noindent
$\bullet$  estimate of $\eta_{\ff}$: using the same argument used above for $\eta_g$ we obtain
\begin{equation}
\label{eq:conv8}
\eta_{\ff} \leq \mathcal{R}(\ff) \, h^{\reg +2} \, \vert \ee_h \vert_{1,\Omega} \,.
\end{equation}
Collecting \eqref{eq:conv6}, \eqref{eq:conv7} and \eqref{eq:conv8} in the error equation \eqref{eq:conv5} we obtain
\begin{equation}
\label{eq:convu1}
\alpha_* \nu_* \vert \eei\vert_{1, \Omega} \leq
\mathcal{R}(\uu, \theta, \nu) h^\reg +
\mathcal{R}(\ff) h^{\reg+2} + 
\frac{\cconh \cesth}{\cdatah} \vert\ee_h\vert_{1, \Omega} +
\frac{C_{\rm lip}}{\csol} \vert \sigma_h\vert_{1, \Omega} \,.
\end{equation}

\noindent
\textbf{Step 4. Error estimates for the velocity and the temperature}

\noindent
Now employing the estimate \eqref{eq:convtheta} in \eqref{eq:convu1} we infer
\[
\left(\alpha_* \nu_* -
\frac{\cconh \cesth}{\cdatah} - 
\frac{\cconh \cest}{\cdata}  
\frac{C_{\rm lip}}{\csol \beta_* \kappa_*}
\right)
\vert \eei\vert_{1, \Omega} \leq
\mathcal{R}(\uu, \theta, \nu, \kappa) h^\reg +
\mathcal{R}(\ff, g) h^{\reg+2} \,.
\]
Therefore assuming that $\nu_*$ and $\kappa_*$ large enough 
\begin{equation}
\label{eq:convu2}
\vert \eei\vert_{1, \Omega} \leq
\mathcal{R}(\uu, \theta, \nu, \kappa) h^\reg +
\mathcal{R}(\ff, g) h^{\reg+2} \,.
\end{equation}
We finally collect the estimates \eqref{eq:convu2}, \eqref{eq:convtheta} and the error bounds \eqref{eq:int-errors}  obtaining
\begin{equation}
\label{eq:conv-theta-u}
\vert \uu - \uu_h \vert_{1, \Omega} +
\Vert \theta - \theta_h \Vert_{1, \Omega}
\leq 
\mathcal{F}(\uu, \theta, \nu, \kappa) h^\reg +
\mathcal{F}(\ff, g) h^{\reg+2}  \,.
\end{equation}

\noindent
\textbf{Step 5. Error estimate for the pressure.}

\noindent
In the last step we briefly study the convergence error for the pressures. 
From the first equation in \eqref{eq:pb variazionale} and \eqref{eq:pb vem}, simple computations yield
\begin{equation}
\label{eq:conv10}
\begin{aligned}
b(\vb_h, \rho_h) &= 
b(\vb_h, p_{I}) - b(\vb_h, p_h) =
b(\vb_h, p) - b(\vb_h, p_h) -  b(\vb_h, \rho_{I})
\\
& =
\left( \avh(\theta_h; \, \uu_h, \vb_h) - \av(\theta; \, \uu, \vb_h) \right) + 
\left( \cvh(\uu_h; \, \uu_h, \vb_h) - \cvskew(\uu; \, \uu, \vb_h) \right) +
\\
& \qquad
+ (\ff- \ff_h, \vb_h) -  b(\vb_h, \rho_{I})
\\
& := \gamma_{a, V} + \gamma_{c, V} + \gamma_{\ff} + \gamma_b
 \,.
\end{aligned}
\end{equation}

\noindent
$\bullet$ estimate of $\gamma_{a, V}$: the estimate of $\gamma_{a, V}$ follows the same steps for the bound of $\eta_{a, V}$
\[
\begin{aligned}
\gamma_{a, V} &\leq 
\nu^* \vert \uu - \uu_h\vert_{1, \Omega} \vert \vb_h\vert_{1, \Omega} +
\frac{C_{\rm lip}}{\csol} \Vert \theta - \theta_h \Vert_{1, \Omega}
\vert \vb_h\vert_{1, \Omega} +
\mathcal{R}(\uu, \theta, \nu) \, h^\reg \, 
\vert \vb_h\vert_{1, \Omega} 
\end{aligned}
\]
therefore from \eqref{eq:conv-theta-u} we have
\begin{equation}
\label{eq:conv11}
\begin{aligned}
\gamma_{a, V} &\leq 
\left(\mathcal{R}(\uu, \theta, \nu, \kappa) \, h^\reg +
\mathcal{R}(\ff, g) \, h^{\reg+2}\right)
\vert \vb_h\vert_{1, \Omega} \,.
\end{aligned}
\end{equation}

\noindent
$\bullet$ estimate of $\gamma_{c, V}$: simple computations yield
\begin{equation}
\label{eq:conv12a}
\begin{aligned}
\gamma_{c, V} &=  
\left(\cvh(\uu_h; \, \uu_h, \vb_h) -
\cvh(\uu; \, \uu, \vb_h) \right) +
\left(\cvh(\uu; \, \uu, \vb_h) -
\cvskew(\uu; \, \uu, \vb_h)\right)
\\
&=
\left(\cvh(\uu_h - \uu; \, \uu_h, \vb_h) +
\cvh(\uu; \, \uu_h - \uu, \vb_h) \right) +
\left(\cvh(\uu; \, \uu, \vb_h) -
\cvskew(\uu; \, \uu, \vb_h)\right) \,.
\end{aligned}
\end{equation}
Employing Property \textbf{(P7)} and bounds \eqref{eq:bounds} and estimate \eqref{eq:conv-theta-u}, we have
\begin{equation}
\label{eq:conv12b}
\begin{aligned}
\cvh(\uu_h - \uu; \, \uu_h, \vb_h) +
\cvh(\uu; \, \uu_h - \uu, \vb_h) 
& \leq
\cconh \left( \vert \uu \vert_{1, \Omega} + 
\vert \uu_h \vert_{1, \Omega}
\right) \vert \uu - \uu_h \vert_{1, \Omega} \,
\vert \vb_h\vert_{1, \Omega}
\\
& \leq
\cconh \left( \frac{\cest}{\cdata} + \frac{\cesth}{\cdatah} \right)
\vert \uu - \uu_h \vert_{1, \Omega} \,
\vert \vb_h\vert_{1, \Omega}
\\
& \leq
\left(\mathcal{R}(\uu, \theta, \nu, \kappa) \, h^\reg +
\mathcal{R}(\ff, g) \, h^{\reg+2}\right)
\vert \vb_h\vert_{1, \Omega} \,.
\end{aligned}
\end{equation}
Whereas applying \cite[Lemma 4.3]{BLV:2018} we have
\begin{equation}
\label{eq:conv12c}
\begin{aligned}
\cvh(\uu; \, \uu, \vb_h) -
\cvskew(\uu; \, \uu, \vb_h)
& \leq
\mathcal{R}(\uu) \, h^\reg \,
\vert \vb_h\vert_{1, \Omega} \,.
\end{aligned}
\end{equation}
Therefore \eqref{eq:conv12a}, \eqref{eq:conv12b} and \eqref{eq:conv12c} implies
\begin{equation}
\label{eq:conv12}
\begin{aligned}
\gamma_{c, V} \leq \left(\mathcal{R}(\uu, \theta, \nu, \kappa) \, h^\reg +
\mathcal{R}(\ff, g) \, h^{\reg+2}\right)
\vert \vb_h\vert_{1, \Omega} \,.
\end{aligned}
\end{equation}

\noindent
$\bullet$ estimate of $\gamma_{\ff} + \gamma_b$: the term $\gamma_{\ff}$ can be bounded as $\eta_{\ff}$. The term $\gamma_b$ can be bounded, recalling Property \textbf{(P0)}, by 
Bramble-Hilbert Lemma \ref{eq:bramble}. Then we have
\begin{equation}
\label{eq:conv13}
\gamma_{\ff} + \gamma_b \leq 
\left(\mathcal{R}(\ff) \, h^{\reg+2} + 
\mathcal{R}(p) \, h^\reg 
\right) 
\vert \vb_h\vert_{1, \Omega} \,.
\end{equation}
Employing the inf-sup stability \eqref{eq:infsup}, from \eqref{eq:conv11}--\eqref{eq:conv13}, we obtain
\[
\isd \|\rho_h\|_{0,E}  \leq
\sup_{\vb_h \in \VDG} \frac{b(\vb_h, \rho_h)}{\vert \vb_h\vert_{1, \Omega}}
\leq 
\mathcal{F}(\uu, \theta, p, \nu, \kappa) \, h^\reg +
\mathcal{F}(\ff, g) \, h^{\reg+2} \,.
\]
The pressure error estimates follows from the previous bound and \eqref{eq:int-errors}:
\[
\|p - p_h\|_{0,E}  \leq
\mathcal{F}(\uu, \theta, p, \nu, \kappa) \, h^\reg +
\mathcal{F}(\ff, g) \, h^{\reg+2} \,.
\]

\end{proof}

\begin{remark}
\label{rmk:conv}
Note that the proposed VEM scheme \eqref{eq:pb vem ker}  has the following favorable property that extends to the context of coupled problems the convergence result obtained for the Navier--Stokes equation:
the  error components partly decouple.
In fact the velocity and the temperature errors do not
depend directly on the discrete pressures, but only indirectly through the approximation
of the loading and convection terms and such dependence on the full load is
much weaker with respect to standard mixed schemes. 
In some situations the partial
decoupling of the errors induces a positive effect on the velocity/temperature  approximation (see for instance Test 2).
\end{remark}

\section{Numerical results}
\label{sec:num}

In this section we present two numerical experiments to test the practical performance of the proposed VEM scheme \eqref{eq:pb vem} and the possible advantages related to the divergence-free velocity solutions.
In the  tests we consider $k=2$.
\subsection{Fixed point iteration}

We describe the linearization strategy based on a fixed-point iteration adopted to solve the non-linear coupled problem.

\vspace{0.2cm}

\noindent
\texttt{LINEAR FIXED POINT ITERATION}

\noindent
Starting from $(\uu^0_h, \, p^0_h) =(\boldsymbol{0}, 0)$, for $n \geq 0$, until convergence solve
\begin{itemize}
\item \texttt{HEAT equation}
\begin{equation*}
\label{eq:heat}
\left \{
\begin{aligned}
& \text{find $\theta_h^{n+1}\in \WDDG$, such that} \\
&\ash(\theta_h^{n+1}, \sigma_h) + \csh(\uu_h^{n}; \, \theta_h^{n+1}, \sigma_h) = (g_h, \sigma_h) \qquad & &\text{$\forall \sigma_h \in \WDZG$,} 
\end{aligned}
\right.
\end{equation*}

\item \texttt{OSEEN equation}
\begin{equation*}
\label{eq:oseen}
\left\{
\begin{aligned}
&\text{find $(\uu_h^{n+1}, \,  p_h^{n+1}) \in \VV_h \times Q_h$, such that} \\
&\begin{aligned}
\avh(\theta_h^{n+1}; \, \uu_h^{n+1}, \vb_h) + \cvh(\uu_h^{n}; \, \uu_h^{n+1}, \vb_h)  + b(\vb_h, p_h^{n+1}) &= (\ff_h, \vb_h) 
\quad & &\text{$\forall \vb_h \in \VDG$,} \\
b(\uu_h^{n+1}, q_h) &= 0 \quad & & \text{$\forall  q_h \in \QDG$.} 
\end{aligned}
\end{aligned}
\right.
\end{equation*}
\end{itemize}
Notice that at each step of the fixed-point iteration the
solution $\uu_h^n$ is still divergence-free, therefore the linearization procedure does non
affect the divergence-free property of the final discrete solution.
At each iteration we need to solve two linear systems with dimension
\[
\begin{aligned}
\texttt{N\_DoFs HEAT} &= N_V + (k-1)N_E + \frac{k(k-1)}{2} N_P \,,
\\
\texttt{N\_DoFs OSEEN} &= 2\, N_V + 2\,(k-1)N_E + 
\left(\frac{k(3k-1)}{2} \right) N_P + 1 \,,
\end{aligned}
\]
where $N_V$, $N_E$, $N_P$ denotes the number of internal vertices, internal edges and polygons in $\Omega_h$ respectively.
The dimension of the linear system for the \texttt{OSEEN equation} can be significantly reduced considering the so-called reduced spaces (see \cite{BLV:2018}).
We now derive the convergence of the fixed-point iteration.
\begin{proposition}
\label{prp:fixed point}
Under the assumptions \cfan{(A0)}, \cfan{(A1)} and under the assumptions of Proposition \ref{prp:uni}, let $(\uu_h, p_h, \theta_h)$ be the solution of \eqref{eq:pb vem}.
Then the sequence $\{(\uu_h^n, p_h^n, \theta_h^n)\}_n$ generated by the \cfun{LINEAR FIXED POINT ITERATION} satisfies
\[
\lim_{n \to \infty} \vert \uu_h - \uu_h^n\vert_{1, \Omega} = 0
\qquad
\lim_{n \to \infty} \vert \theta_h - \theta_h^n\vert_{1, \Omega} = 0
\qquad
\lim_{n \to \infty} \Vert p_h - p_h^n\Vert_{0, \Omega} = 0 \,.
\]
\end{proposition}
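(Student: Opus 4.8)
The plan is to show that the iteration contracts geometrically towards the discrete solution, with precisely the factor $K$ of \eqref{eq:uni}; the whole argument parallels the uniqueness proof of Proposition \ref{prp:uni}, with the absorption argument replaced by a recursion. First I would introduce the error quantities $\ee_h^n := \uu_h - \uu_h^n$, $\zeta_h^n := \theta_h - \theta_h^n$ and $\rho_h^n := p_h - p_h^n$. Each Oseen step enforces $b(\uu_h^{n+1}, q_h) = 0$, so by the kernel inclusion \eqref{eq:kernel} we have $\uu_h^n \in \ZDG$, whence $\ee_h^n \in \ZDG$, while $\zeta_h^n \in \WDZG$. Consequently the skew-symmetric forms $\cvh(\ww_h; \cdot, \cdot)$ and $\csh(\ww_h; \cdot, \cdot)$ (cf. \eqref{eq:forma cvhlskew} and \eqref{eq:forma cshlskew}) vanish on their diagonal whenever their first argument lies in $\ZDG$, and this cancellation drives the estimates. (Each subproblem is itself well posed: the heat step is coercive linear, the Oseen step inf-sup stable linear.)

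For the temperature, I would subtract the heat step from the third equation of \eqref{eq:pb vem ker}. Using $\csh(\uu_h; \theta_h, \sigma_h) - \csh(\uu_h^n; \theta_h^{n+1}, \sigma_h) = \csh(\ee_h^n; \theta_h, \sigma_h) + \csh(\uu_h^n; \zeta_h^{n+1}, \sigma_h)$ and testing with $\sigma_h = \zeta_h^{n+1} \in \WDZG$, the term $\csh(\uu_h^n; \zeta_h^{n+1}, \zeta_h^{n+1})$ drops by skew-symmetry. Coercivity of $\ash$ and continuity of $\csh$ (property \textbf{(P7)}), together with the a priori bound $\Vert \theta_h\Vert_{1,\Omega} \le \cesth/\cdatah$ from \eqref{eq:bounds}, then yield
\[
\vert \zeta_h^{n+1}\vert_{1,\Omega} \le \frac{\cconh \cesth}{\cdatah\,\beta_* \kappa_*}\,\vert \ee_h^n\vert_{1,\Omega},
\]
the exact analogue of \eqref{eq:prpuni3}.

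For the velocity, I would subtract the Oseen step from the first equation of \eqref{eq:pb vem ker} tested on $\vb_h \in \ZDG$. The decisive step, and the main obstacle, is to split the viscosity difference as $\avh(\theta_h; \uu_h, \vb_h) - \avh(\theta_h^{n+1}; \uu_h^{n+1}, \vb_h) = [\avh(\theta_h; \uu_h, \vb_h) - \avh(\theta_h^{n+1}; \uu_h, \vb_h)] + \avh(\theta_h^{n+1}; \ee_h^{n+1}, \vb_h)$, so that the $W^1_q$ regularity demanded by Lemma \ref{lm:lipshitz} falls on the \emph{discrete solution} $\uu_h$, whose $W^1_q$ seminorm is controlled by \eqref{eq:csolh}, rather than on the iterate $\uu_h^{n+1}$, for which no uniform regularity bound is at hand. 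Testing with $\vb_h = \ee_h^{n+1}$, cancelling $\cvh(\uu_h^n; \ee_h^{n+1}, \ee_h^{n+1})$ by skew-symmetry, and invoking coercivity of $\avh(\theta_h^{n+1}; \cdot, \cdot)$, the bound \eqref{eq:cclip} of Lemma \ref{lm:lipshitz}, continuity of $\cvh$, and \eqref{eq:bounds}, I obtain
\[
\alpha_* \nu_* \vert \ee_h^{n+1}\vert_{1,\Omega} \le \cclip \clip \vert \uu_h\vert_{[W^1_q(\Omega_h)]^2}\,\vert \zeta_h^{n+1}\vert_{1,\Omega} + \frac{\cconh \cesth}{\cdatah}\,\vert \ee_h^n\vert_{1,\Omega}.
\]
Inserting the temperature estimate and bounding $\clip \vert \uu_h\vert_{[W^1_q(\Omega_h)]^2} < 1/\csolh$ via \eqref{eq:csolh} reproduces exactly the factor $K$ of \eqref{eq:uni}, giving $\vert \ee_h^{n+1}\vert_{1,\Omega} \le K\,\vert \ee_h^n\vert_{1,\Omega}$ with $K < 1$. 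Hence $\vert \ee_h^n\vert_{1,\Omega} \to 0$ geometrically, and the temperature estimate then forces $\vert \zeta_h^n\vert_{1,\Omega} \to 0$.

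Finally, for the pressure I would subtract the two momentum equations tested now over the \emph{full} space $\VDG$, obtaining $b(\vb_h, \rho_h^{n+1})$ equal to a combination of the viscosity and convective differences already estimated. Splitting these exactly as above and applying property \textbf{(P7)} and Lemma \ref{lm:lipshitz}, the right-hand side is bounded by $C(\vert \ee_h^n\vert_{1,\Omega} + \vert \ee_h^{n+1}\vert_{1,\Omega} + \vert \zeta_h^{n+1}\vert_{1,\Omega})\,\vert \vb_h\vert_{1,\Omega}$. Since $\rho_h^{n+1} \in \QDG$, the inf-sup condition \eqref{eq:infsup} gives $\isd\,\Vert \rho_h^{n+1}\Vert_{0,\Omega} \le C(\vert \ee_h^n\vert_{1,\Omega} + \vert \ee_h^{n+1}\vert_{1,\Omega} + \vert \zeta_h^{n+1}\vert_{1,\Omega})$, whose right-hand side tends to zero by the previous step; this proves $\Vert p_h - p_h^n\Vert_{0,\Omega} \to 0$ and completes the argument.
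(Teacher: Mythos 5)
Your proposal is correct and follows essentially the same route as the paper's own proof: the same temperature estimate via skew-symmetry and coercivity of $\ash$, the same splitting of the viscosity difference so that Lemma \ref{lm:lipshitz} is applied with $\uu_h$ (controlled by \eqref{eq:csolh}) in the $W^1_q$ slot, the same contraction factor $K$ from \eqref{eq:uni}, and the same inf-sup argument for the pressure. The only cosmetic difference is that the diagonal vanishing of $\cvh$ and $\csh$ holds by construction of \eqref{eq:forma cvhlskew}--\eqref{eq:forma cshlskew} for any first argument, not only for those in $\ZDG$.
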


\begin{proof}
We only sketch the proof since it can be derived using similar technique to that in the proof of Proposition \ref{prp:uni}. 
For any $n \geq 0$, let us introduce the error quantities
\[
\ddd_h^n := \uu_h^n - \uu_h\,,
\qquad
\sigma_h^n := \theta_h^n - \theta_h\,,
\qquad
\rho_h^n := p_h^n - p_h \,.
\]
Note that $\ddd_h^n \in \ZDG$ and $\sigma_h^n \in \WDZG$.
Simple computations yield
\[
\ash(\sigma_h^{n+1}, \sigma_h^{n+1}) = - 
\csh(\ddd_h^n; \theta_h, \sigma_h^{n+1}) \,.
\]
Therefore form Property \textbf{(P7)} and bound \eqref{eq:bounds} we obtain.
\begin{equation}
\label{eq:sigmahn}
\beta_* \kappa_* \vert \sigma_h^{n+1}\vert_{1, \Omega}
\leq 
\frac{\cconh \cesth}{\cdatah}  \vert \ddd_h^n\vert_{1, \Omega} \,.
\end{equation}
For the velocity we can derive the following equation
\[
\avh(\theta_h^{n+1} ; \, \ddd_h^{n+1}, \ddd_h^{n+1})  = - \cvh(\ddd_h^n; \, \uu_h, \ddd_h^{n+1})
-\left(\avh(\theta_h^{n+1}; \, \uu_h, \ddd_h^{n+1})  
- \avh(\theta_h; \, \uu_h, \ddd_h^{n+1}) \right) \,.
\]
Then, employing properties \textbf{(P7)} and Lemma \ref{lm:lipshitz} we infer
\[
\alpha_* \nu_* \vert \ddd_h^{n+1} \vert_{1, \Omega}^2 
\leq 
\cconh \vert \uu_h \vert_{1, \Omega} 
\vert \ddd_h^n \vert_{1, \Omega} 
\vert \ddd_h^{n+1} \vert_{1, \Omega} 
+  \cclip \clip 
\vert \uu_h \vert_{[W^1_q(\Omega_h)]^2} 
\vert \sigma_h^{n+1} \vert_{1, \Omega} 
\vert \ddd_h^{n+1} \vert_{1, \Omega}\,.
\]
Using  \eqref{eq:bounds} and \eqref{eq:csolh}, from \eqref{eq:sigmahn} we derive
\[
\alpha_* \nu_* \vert \ddd_h^{n+1} \vert_{1, \Omega} \leq 
\frac{\cconh \cesth}{\cdatah } 
 \vert \ddd_h^n \vert_{1, \Omega} +
\frac{\cclip}{\csolh } 
\frac{\cconh \cesth}{\cdatah \beta_* \kappa_*} 
\vert \ddd_h^n \vert_{1, \Omega}
\,,
\]
that, recalling \eqref{eq:uni}, corresponds to
\[
\vert \ddd_h^{n+1} \vert_{1, \Omega} \leq  K  \, \vert \ddd_h^n \vert_{1, \Omega}\,.
\]
Therefore, since $\uu_h^0 = \boldsymbol{0}$ from \eqref{eq:bounds}, we have
\begin{equation}
\label{eq:thetahn}
\vert \ddd_h^n \vert_{1, \Omega} \leq 
\frac{\cesth}{\cdatah} \, K^n 
\,, 
\qquad \qquad
\vert \sigma_h^n \vert_{1, \Omega} \leq
\frac{ \cconh \cesth^2}{\cdatah^2 \beta_* \kappa_*}  
K^{n-1} \,.
\end{equation}
Concerning the pressure we derive 
\begin{equation}
\label{eq:rhohn1}
\begin{aligned}
b(\vb_h, \rho_h^{n+1}) &=
\avh(\theta_h; \, \uu_h, \vb_h) - 
\avh(\theta_h^{n+1}; \, \uu_h^{n+1}, \vb_h)  +
\\
& \qquad + 
\cvh(\uu_h; \, \uu_h, \vb_h) - 
\cvh(\uu_h^{n}; \, \uu_h^{n+1}, \vb_h) 
\\
&=
\left(\avh(\theta_h; \, \uu_h, \vb_h) - 
\avh(\theta_h^{n+1}; \, \uu_h, \vb_h)  \right)
- \avh(\theta_h^{n+1}; \, \ddd_h^{n+1}, \vb_h)  
\\
& \qquad - 
\cvh(\ddd_h^n; \, \uu_h, \vb_h) - 
\cvh(\uu_h^{n}; \, \ddd_h^{n+1}, \vb_h) \,.
\end{aligned}
\end{equation}
Lemma \ref{lm:lipshitz} implies
\begin{equation}
\label{eq:rhohn2}
\avh(\theta_h; \, \uu_h, \vb_h) - 
\avh(\theta_h^{n+1}; \, \uu_h, \vb_h) \leq
\cclip \clip \vert \sigma_h^{n+1}\vert_{1, \Omega}
\vert \uu_h\vert_{[W^1_q(\Omega)]^2}
\vert \vb_h\vert_{1, \Omega} \,,
\end{equation}
whereas Property \textbf{(P7)} yields
\begin{equation}
\label{eq:rhohn3}
\begin{aligned}
-\avh(\theta_h^{n+1}; \, \ddd_h^{n+1}, \vb_h)  
&\leq 
\alpha^* \nu^* 
\vert \ddd_h^{n+1}\vert_{1, \Omega}  
\vert \vb_h\vert_{1, \Omega}
\\
- \cvh(\ddd_h^n; \, \uu_h, \vb_h) - 
\cvh(\uu_h^{n}; \, \ddd_h^{n+1}, \vb_h) 
& \leq
\cconh
\left(
\vert \ddd_h^{n}\vert_{1, \Omega} 
\vert \uu_h\vert_{1, \Omega}
+
\vert \ddd_h^{n+1}\vert_{1, \Omega} 
\vert \uu_h^{n}\vert_{1, \Omega} 
\right)
\vert \vb_h\vert_{1, \Omega} \,.
\end{aligned}
\end{equation}
Then recalling \eqref{eq:csolh} and \eqref{eq:bounds} from \eqref{eq:rhohn1}, \eqref{eq:rhohn2} and \eqref{eq:rhohn3}, for any $\vb_h \in \VDG$ we have
\[
\begin{aligned}
\frac{b(\vb_h, \rho_h^{n+1})}{{\vert\vb_h\vert_{1,\Omega}}} &\leq
\frac{\cclip}{\csolh} \vert \sigma_h^{n+1}\vert_{1, \Omega}
+
\left(\alpha^* \nu^*  + \cconh \frac{\cesth}{\cdatah} \right) 
\vert \ddd_h^{n+1} \vert_{1, \Omega}
+
\cconh \frac{\cesth}{\cdatah} \vert \ddd_h^{n} \vert_{1, \Omega} \,.
\end{aligned}
\]
Therefore from the inf-sup stability \eqref{eq:infsup} and \eqref{eq:thetahn} we have
\[
\isd \Vert \rho_h^{n+1}\Vert_{0, \Omega}
\leq
\cconh \frac{\cesth^2}{\cdatah^2} \left( 
1 + \frac{\cclip}{\csolh \beta_* \kappa_*}
\right)
K^n + O(K^{n+1}) \,.
\]
Now the {thesis} follows from \eqref{eq:uni}.
\end{proof}

\paragraph{\textbf{Test 1. Error convergence}}
In this test we examine the practical performance and the convergence properties of the proposed scheme \eqref{eq:pb vem} in the light of Proposition \ref{prp:conv}.

\noindent
In order to compute the VEM error between the exact solution $\uu_{\rm ex}$ and the VEM
solution $\uu_h$ we consider the computable error quantities 
\[
\begin{aligned}
\texttt{err}(\uu_h, H^1) &:=
\texttt{sqrt}\left(\sum_{E \in \Omega_h} \|\Gr \uu_{\rm ex} - \Pi^{0,E}_{1} \Gr \uu_h\|^2_{0,E}\right) /\|\Gr \uu_{\rm ex}\|_{0}  \,,
\\
\texttt{err}(\uu_h, L^2) &:=
\texttt{sqrt}\left(\sum_{E \in \Omega_h} \|\uu_{\rm ex} - \Pi^{0,E}_{2} \uu_h\|_{0,E}\right) /\|\uu_{\rm ex}\|_{0}
\,.
\end{aligned}
\]
Analogous error quantities are considered to measure the error between the exact solution $\theta_{\rm ex}$ and the VEM solution $\theta_h$.
For the pressure error we take $\texttt{err}(p_h, L^2) := \|p - p_h\|_{0, \Omega}$.

In the present test we consider Problem \eqref{eq:pb primale} on the unit square 
$\Omega = (0, 1)^2$, the viscosity is $\nu(\theta)=2 + \theta^2$, the conductivity is $\kappa(x, y)= 1$, the load terms $\ff$ and $g$ and the Dirichlet boundary conditions are chosen in accordance with the analytical solution
\[
\begin{aligned}
\uu_{\rm ex}(x,y) &= e^x 
\begin{pmatrix}
\sin y + y \cos y - x \sin y
\\
-x \cos y - y \sin y- \cos y
\end{pmatrix} \,,
\\
p_{\rm ex}(x,y) &=  \sin( \pi x) \cos(4 \pi y) \,,
\\
\theta_{\rm ex}(x,y) &=  \sin(4 \pi x) \sin(\pi y)
\,.
\end{aligned}
\]
Notice that the viscosity $\nu$ satisfies Property \textbf{(P0)} whenever $\vert \theta \vert \leq \theta_{\rm max}$.
The domain $\Omega$ is partitioned with the following sequences of polygonal meshes:
\texttt{QUADRILATERAL} distorted meshes,
\texttt{TRIANGULAR} meshes,
\texttt{CVT} (Centroidal Voronoi Tessellations) meshes,
\texttt{RANDOM} Voronoi meshes (see Fig. \ref{fig:meshes}).
For the generation of the Voronoi meshes we used the code \texttt{Polymesher} \cite{TPPM12}.
For each family of meshes we take the sequence with diameter $h =2^{-2}$, $2^{-3}$, $2^{-4}$, $2^{-5}$, $2^{-6}$.

\begin{figure}
\centering
\begin{overpic}[scale=0.18]{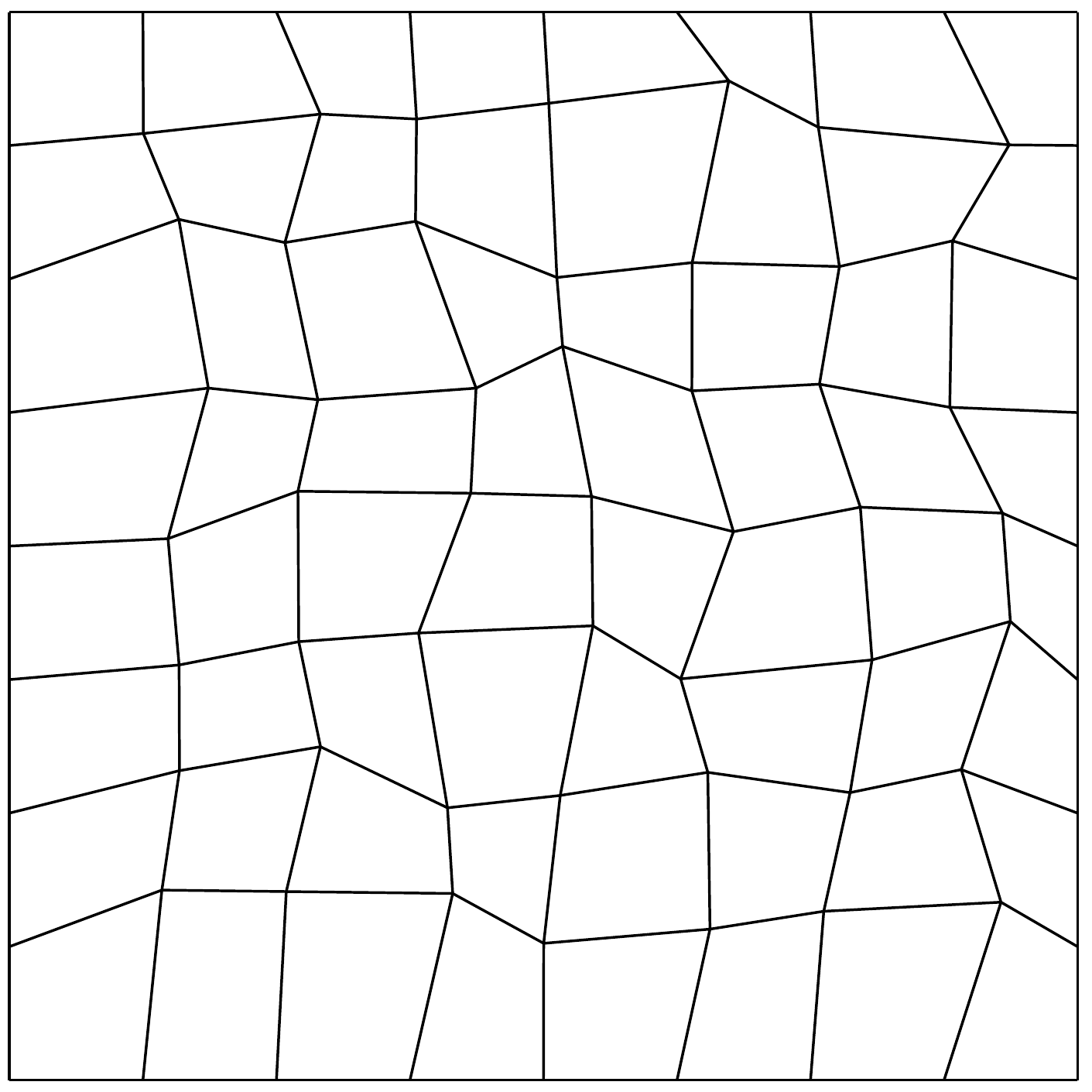}
\put(5,-15){{{\texttt{QUADRILATERAL}}}}
\end{overpic}
\,\,\,
\begin{overpic}[scale=0.18]{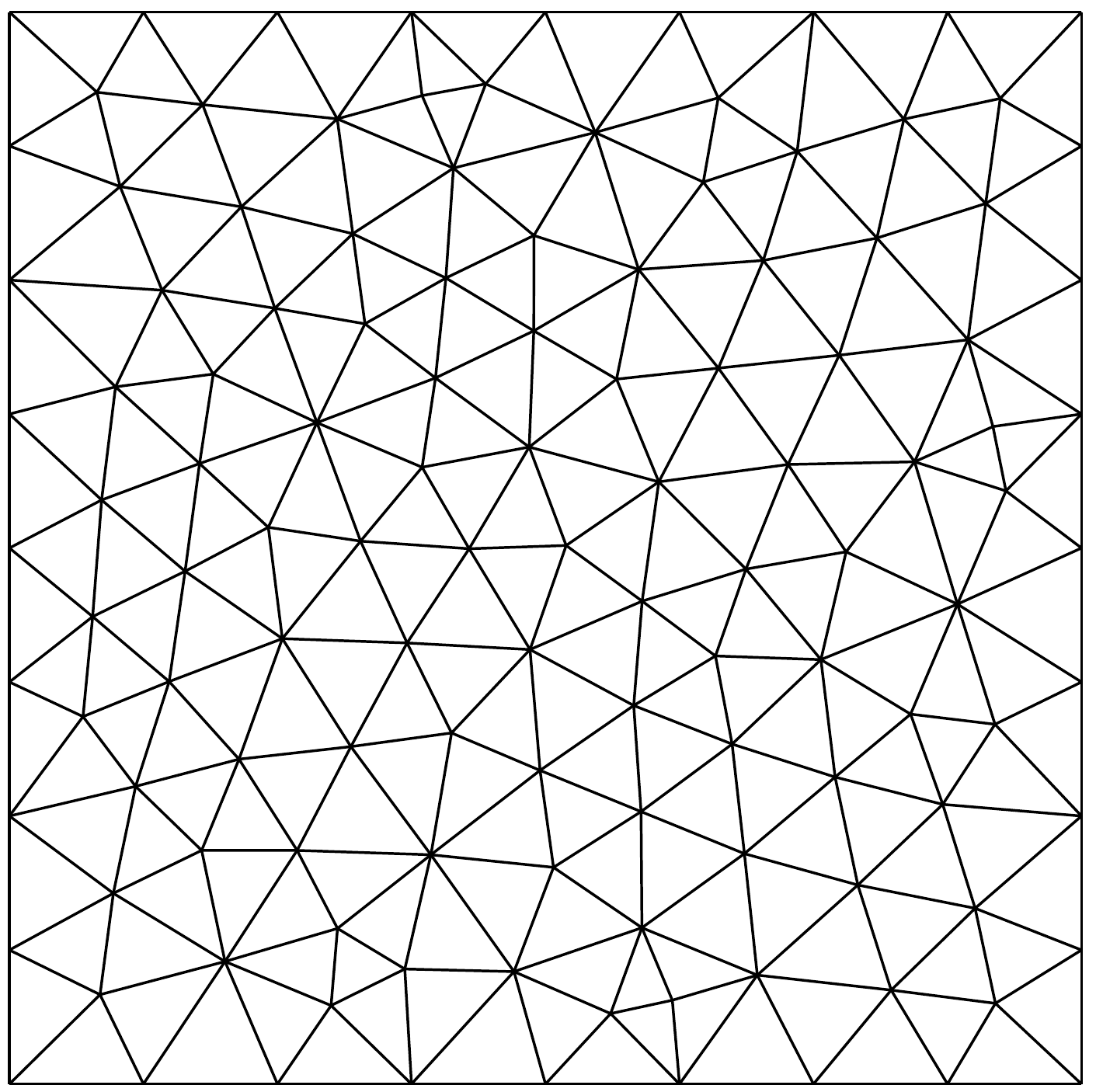}
\put(15,-15){{{\texttt{TRIANGULAR}}}}
\end{overpic}
\,\,\,
\begin{overpic}[scale=0.18]{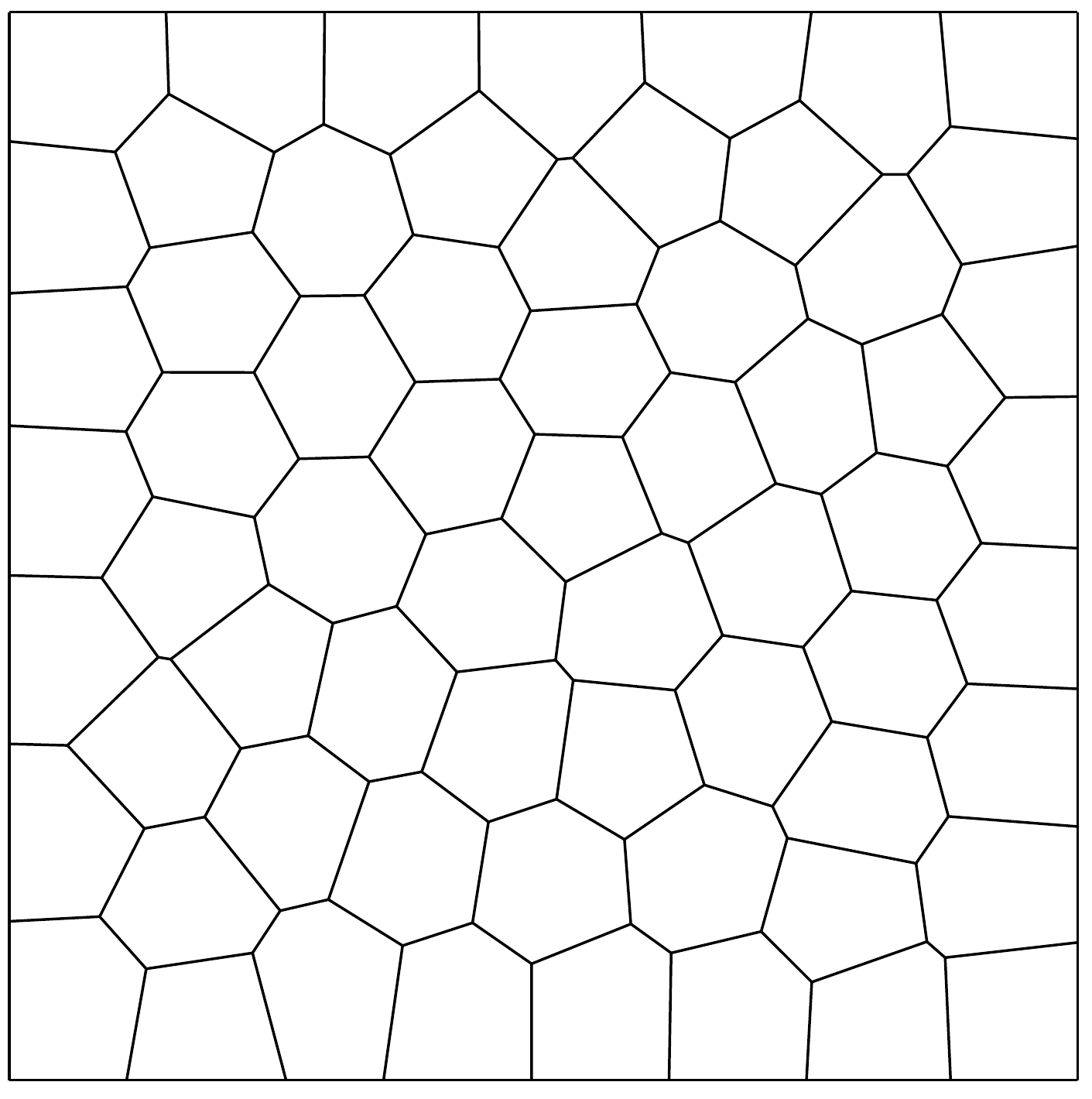}
\put(25,-15){{{\texttt{VORONOI}}}}
\end{overpic}
\,\,\,
\begin{overpic}[scale=0.18]{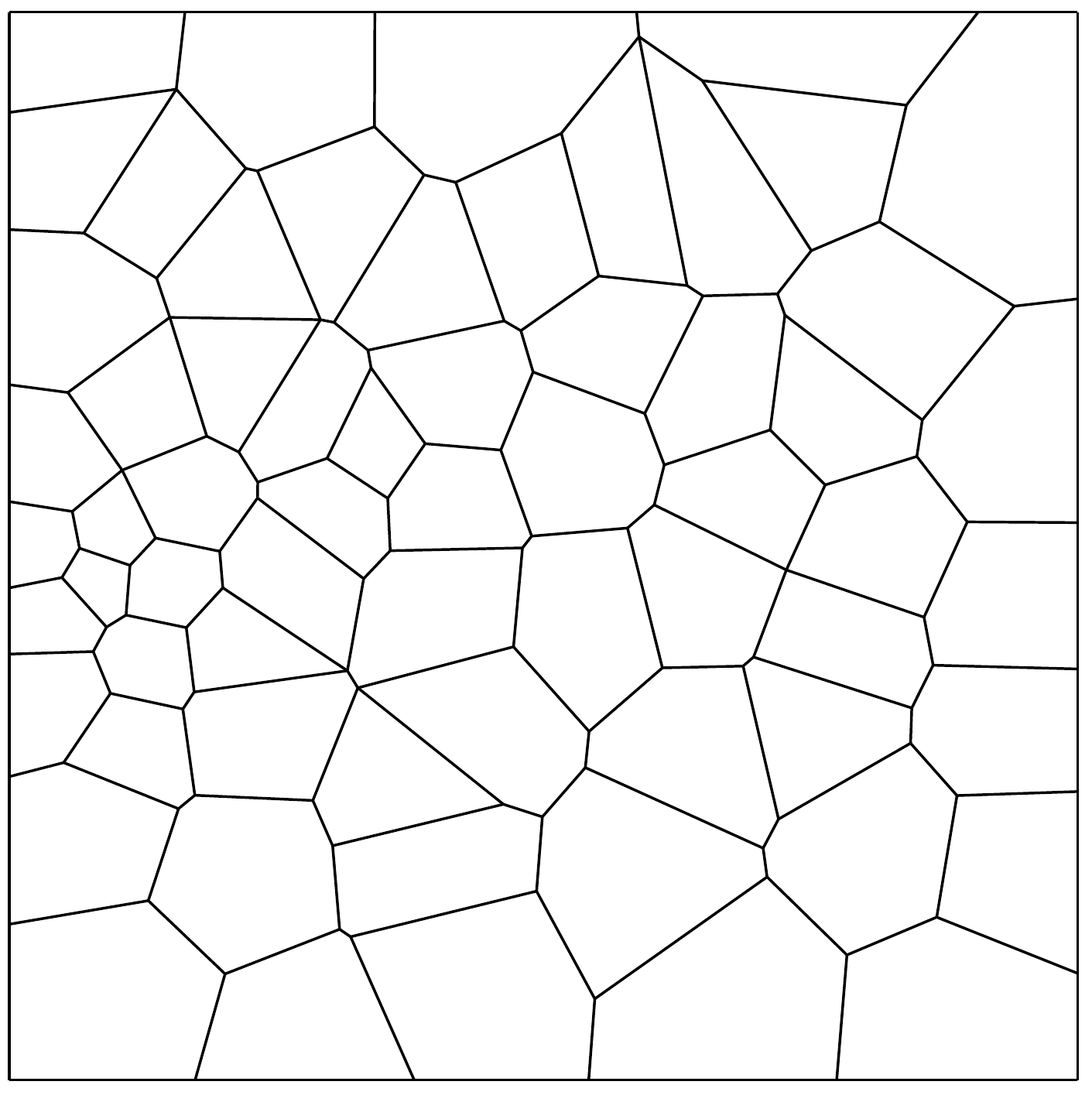}
\put(25,-15){{{\texttt{RANDOM}}}}
\end{overpic}
\vspace{0.5cm}
\caption{Test 1. Example of the adopted polygonal meshes.}
\label{fig:meshes}
\end{figure}

In Table \ref{tab:test1} we show the dimensions of the linear systems solved at each iteration of the fixed-point procedure and number of iterations needed for the adopter sequences of meshes. We iterate the fixed-point procedure with the tolerance \texttt{TOLL=1e-7}.

\begin{table}
\parbox{.45\linewidth}{
\centering
\begin{tabular}{l*{3}{c}}
\toprule
\multicolumn{4}{c}{\texttt{QUADRILATERAL MESHES}} 
\\
\midrule             
   $\texttt{1/h}$  
&  $\texttt{HEAT}$   
&  $\texttt{OSEEN}$       
&  $\texttt{N\_IT}$    
\\
\midrule                          
$\texttt{4}$       
&$\texttt{49}$         &$\texttt{147}$         & $\texttt{6}$          
\\
$\texttt{8}$     
&$\texttt{225}$         &$\texttt{643}$         & $\texttt{6}$        
\\
$\texttt{16}$          
&$\texttt{961}$         &$\texttt{2691}$         & $\texttt{6}$    
\\
$\texttt{32}$       
&$\texttt{3969}$         &$\texttt{11011}$         & $\texttt{5}$     
\\
$\texttt{64}$       
&$\texttt{16129}$         &$\texttt{44547}$         & $\texttt{6}$
\\
\bottomrule
\end{tabular}
}
\qquad \qquad \qquad
\parbox{.45\linewidth}{
\centering
\begin{tabular}{l*{3}{c}}
\toprule
\multicolumn{4}{c}{\texttt{TRIANGULAR MESHES}}
\\
\midrule             
   $\texttt{1/h}$    
&  $\texttt{HEAT}$   
&  $\texttt{OSEEN}$       
&  $\texttt{N\_IT}$     
\\
\midrule                          
$\texttt{4}$       
&$\texttt{129}$         &$\texttt{403}$         & $\texttt{6}$          
\\
$\texttt{8}$     
&$\texttt{605}$         &$\texttt{1847}$         & $\texttt{6}$        
\\
$\texttt{16}$          
&$\texttt{2547}$         &$\texttt{7705}$         & $\texttt{5}$     
\\
$\texttt{32}$       
&$\texttt{10331}$         &$\texttt{31121}$         & $\texttt{5}$      
\\
$\texttt{64}$       
&$\texttt{41985}$         &$\texttt{126211}$         & $\texttt{6}$
\\
\bottomrule
\end{tabular}
}
\\
\vspace{0.5cm}
\parbox{.45\linewidth}{
\centering
\begin{tabular}{l*{3}{c}}
\toprule
\multicolumn{4}{c}{\texttt{VORONOI MESHES}} 
\\
\midrule             
   $\texttt{1/h}$  
&  $\texttt{HEAT}$   
&  $\texttt{OSEEN}$       
&  $\texttt{N\_IT}$    
\\
\midrule                          
$\texttt{4}$       
&$\texttt{63}$         &$\texttt{175}$         & $\texttt{6}$
\\
$\texttt{8}$     
&$\texttt{315}$         &$\texttt{823}$         & $\texttt{6}$        
\\
$\texttt{16}$          
&$\texttt{1399}$         &$\texttt{3567}$         & $\texttt{6}$    
\\
$\texttt{32}$       
&$\texttt{5847}$         &$\texttt{14767}$         & $\texttt{5}$     
\\
$\texttt{64}$       
&$\texttt{23877}$         &$\texttt{60043}$         & $\texttt{6}$
\\
\bottomrule
\end{tabular}
}
\qquad \qquad \qquad
\parbox{.45\linewidth}{
\centering
\begin{tabular}{l*{3}{c}}
\toprule
\multicolumn{4}{c}{\texttt{RANDOM MESHES}}
\\
\midrule             
   $\texttt{1/h}$    
&  $\texttt{HEAT}$   
&  $\texttt{OSEEN}$         
&  $\texttt{N\_IT}$     
\\
\midrule                          
$\texttt{4}$       
&$\texttt{69}$         &$\texttt{187}$         & $\texttt{6}$          
\\
$\texttt{8}$     
&$\texttt{307}$         &$\texttt{807}$         & $\texttt{6}$        
\\
$\texttt{16}$          
&$\texttt{1367}$         &$\texttt{3503}$         & $\texttt{6}$     
\\
$\texttt{32}$       
&$\texttt{5659}$         &$\texttt{14391}$         & $\texttt{5}$      
\\
$\texttt{64}$       
&$\texttt{22921}$         &$\texttt{58131}$         & $\texttt{6}$
\\
\bottomrule
\end{tabular}
}
\vspace{0.25cm}
\caption{Test 1.  Dimensions of the linear systems solved at each iteration of the fixed-point procedure and number of iterations.}
\label{tab:test1}
\end{table} 

In Fig. \ref{fig:results} we display the errors $\texttt{err}(\uu_h, H^1)$, $\texttt{err}(\uu_h, L^2)$, $\texttt{err}(\theta_h, H^1)$, $\texttt{err}(\theta_h, L^2)$ and $\texttt{err}(p_h, L^2)$ for the the sequences of meshes aforementioned. 
We notice that the theoretical predictions of Proposition \ref{prp:conv} are confirmed, moreover we  observe that the method is robust with respect to the mesh distortion.

\begin{figure}
\parbox{.45\linewidth}{
\centering
\begin{overpic}[scale=0.16]{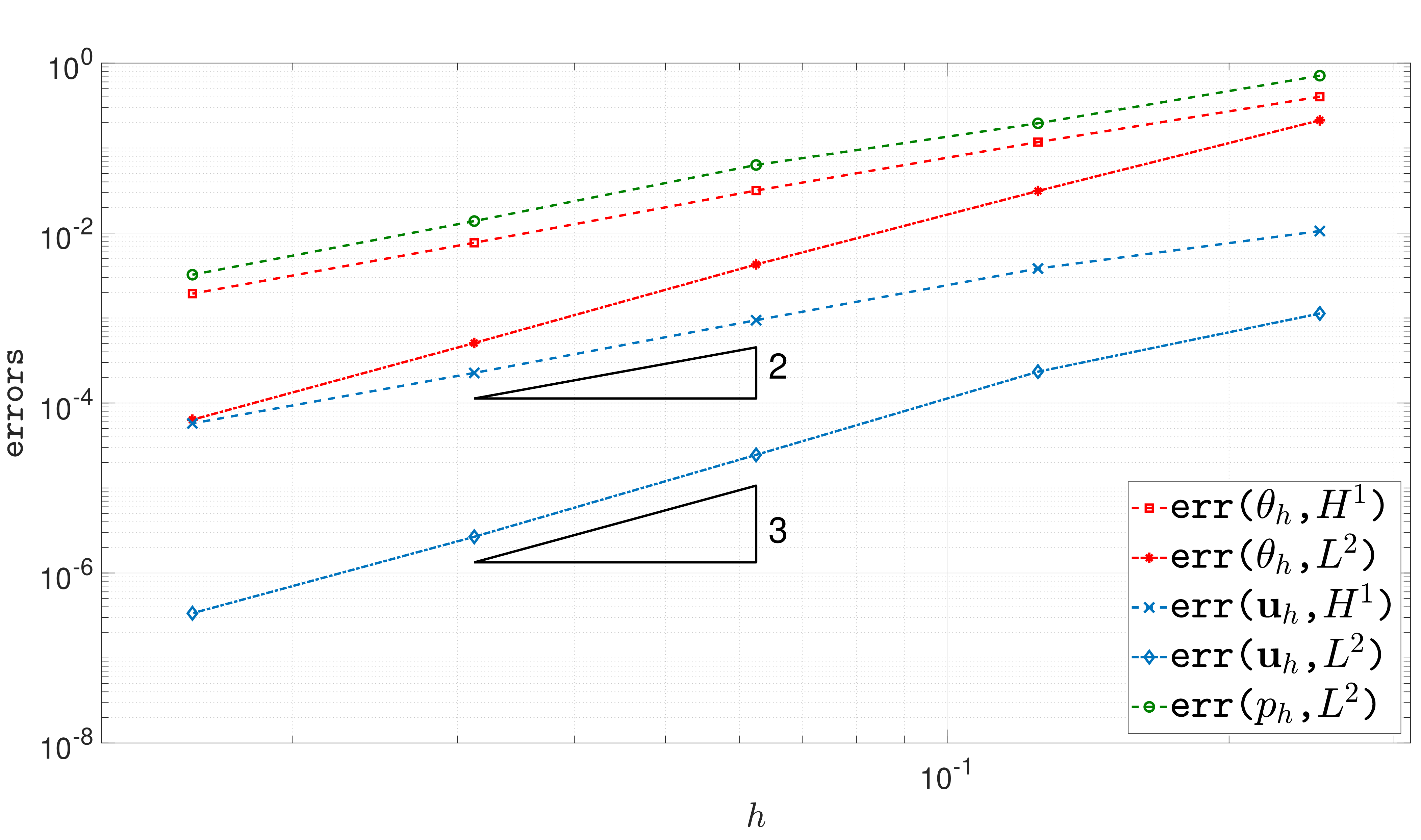}
\put(23,-5){{\texttt{QUADRIALTERAL MESHES}}}
\end{overpic}
}
\qquad
\parbox{.45\linewidth}{
\centering
\begin{overpic}[scale=0.16]{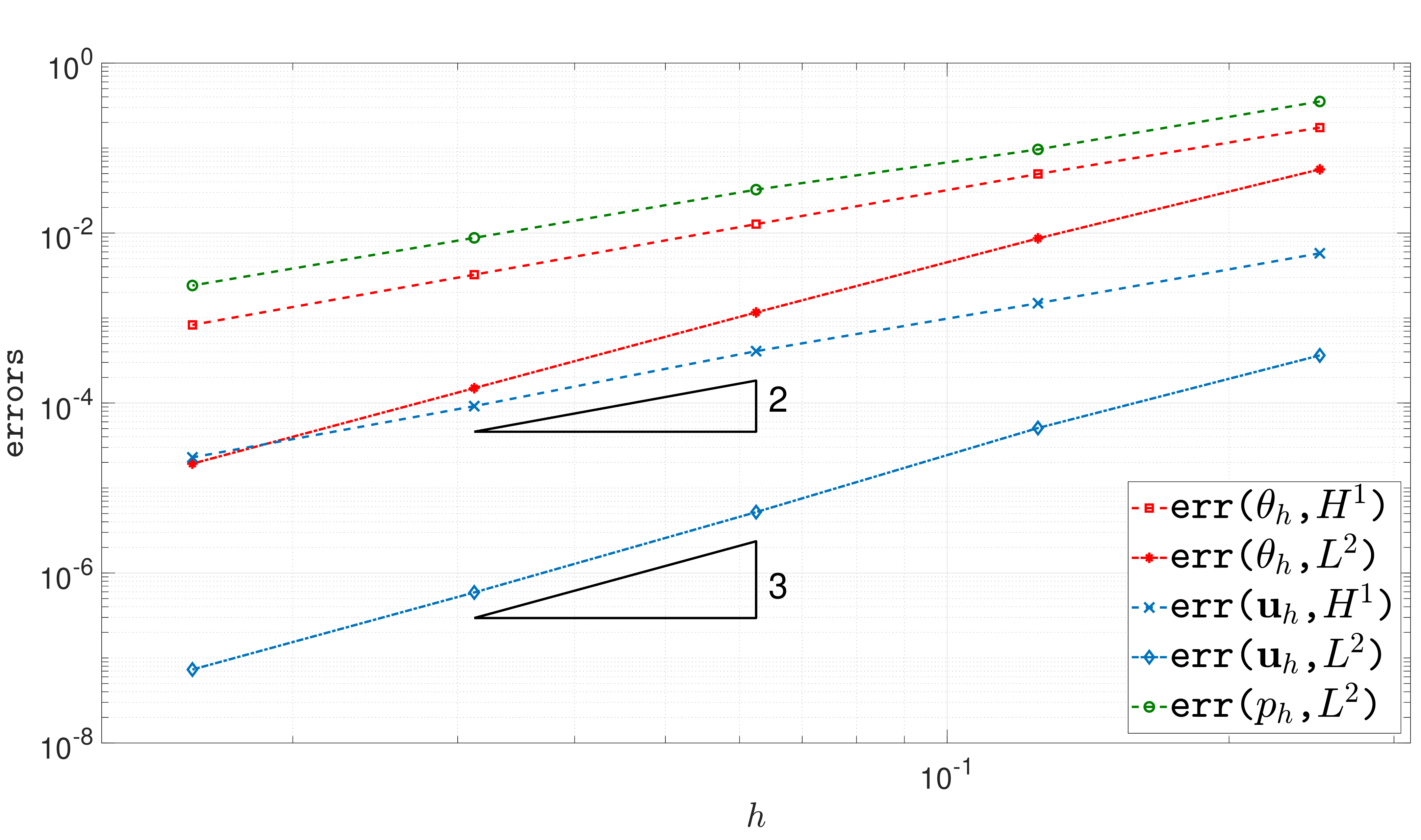}
\put(27,-5){\texttt{TRIANGULAR MESHES}}
\end{overpic}
}
\end{figure}

\begin{figure}
\parbox{.45\linewidth}{
\centering
\begin{overpic}[scale=0.16]{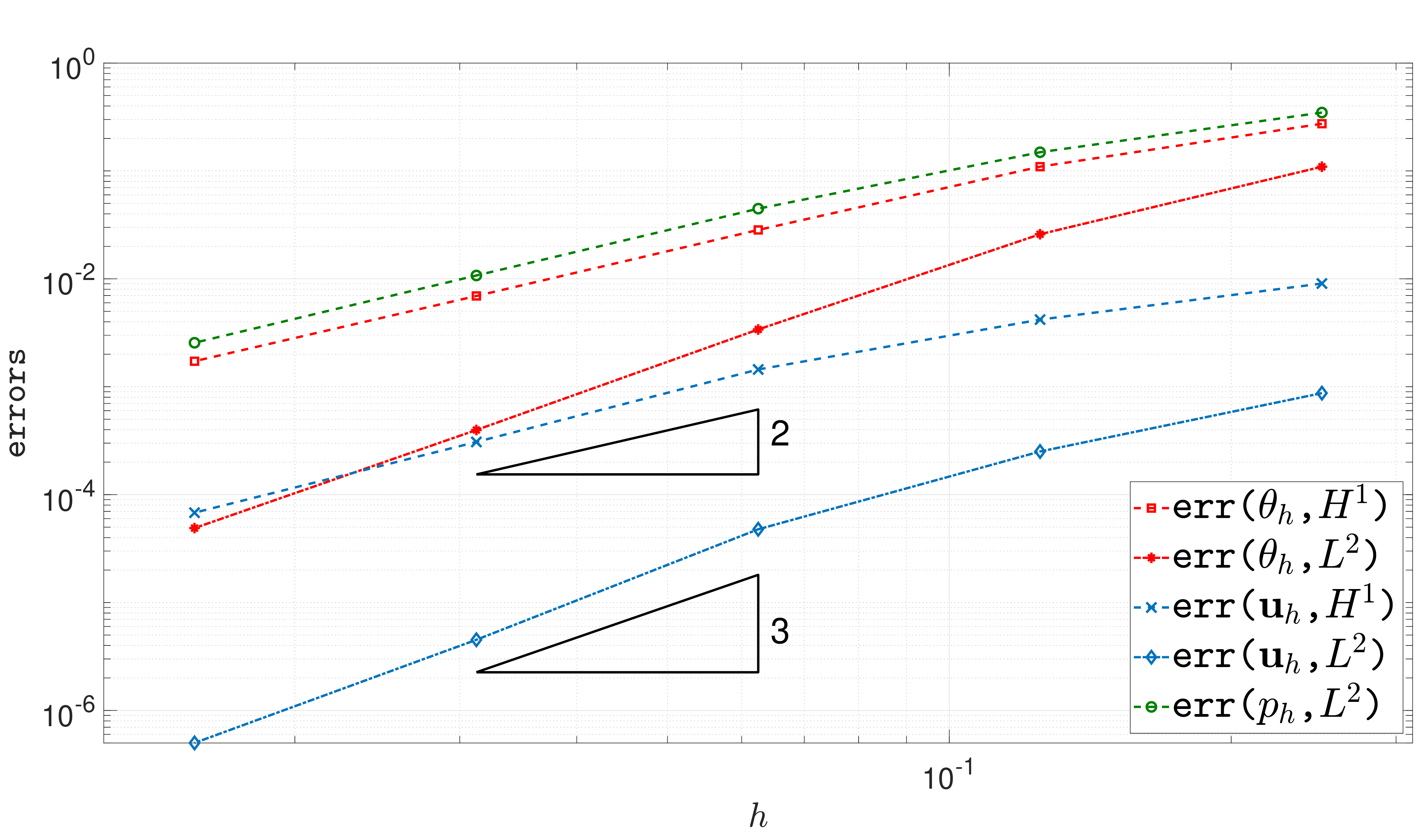}
\put(33,-5){\texttt{VORONOI MESHES}}
\end{overpic}
}
\qquad
\parbox{.45\linewidth}{
\centering
\begin{overpic}[scale=0.16]{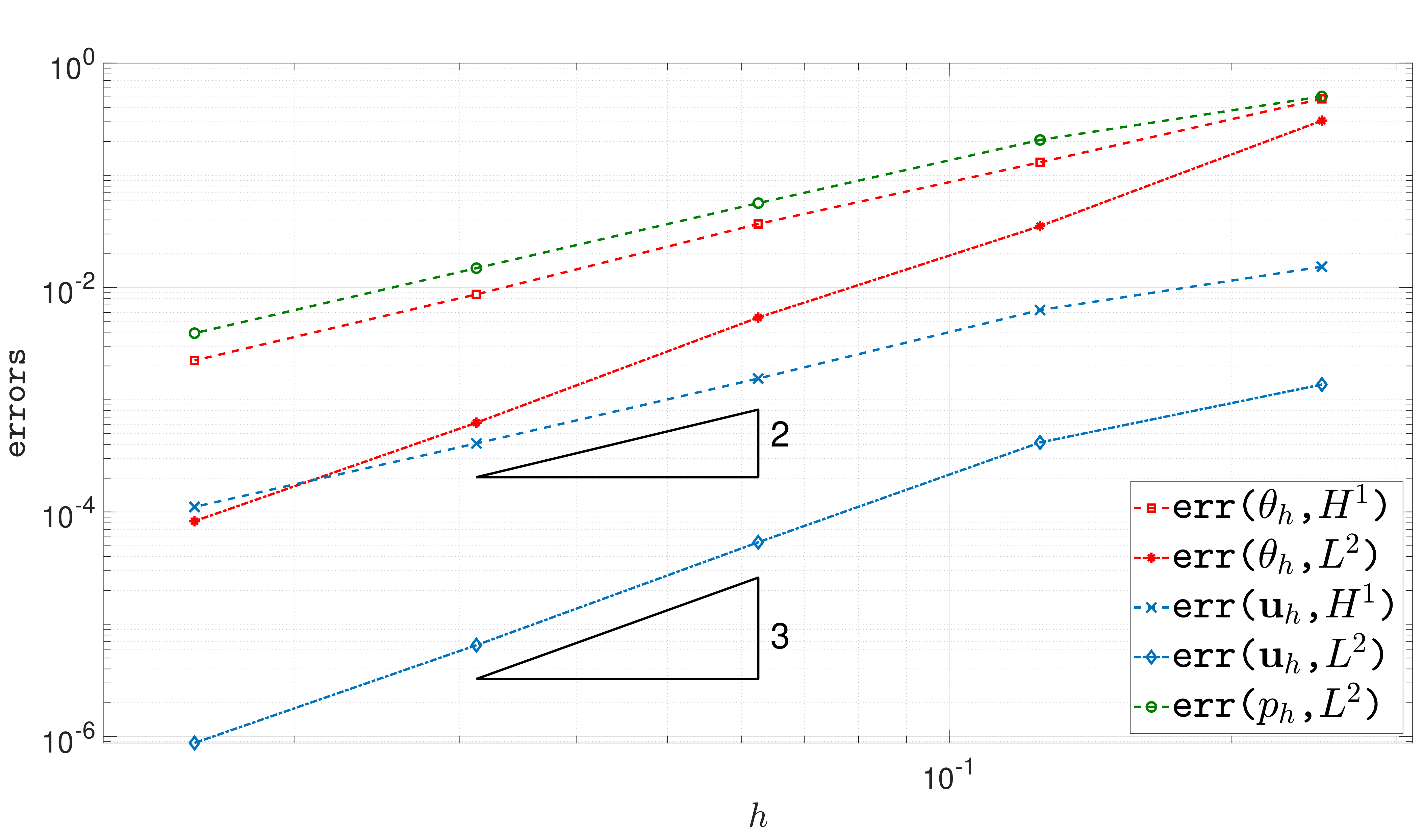}
\put(35,-5){\texttt{RANDOM MESHES}}
\end{overpic}
}
\vspace{1cm}
\caption{Test 1. Convergence histories of the VEM \eqref{eq:pb vem} for the adopted families of polygonal meshes.}
\label{fig:results}
\end{figure}

\paragraph{Test 2. Convection-dominated transport of a passive scalar.}
The scope of the present test is to show that divergence-free discrete velocity solutions might also be of advantage
in coupled problems.
We investigate the behavior of the proposed method for the test  proposed in \cite[Example 6.6]{john-linke-merdon-neilan-rebholz:2017}.
We consider the following coupled problem on the domain $\Omega = (0, 4) \times (0,2) \setminus [2, 4] \times [0, 1]$ with a flow field which is governed by the Stokes equation 
\begin{equation}
\label{eq:pb test}
\left\{
\begin{aligned}
&
\begin{aligned}
-  \nu \, \dd  (\epseps (\uu))  -  \nabla p &= \boldsymbol{0}\qquad  & &\text{in $\Omega$,} \\
\dd \, \uu &= 0 \qquad & &\text{in $\Omega$,} \\
-  \kappa \, \Delta \theta + \uu \cdot \nabla \theta &=  0 \qquad  & &\text{in $\Omega$,} 
\end{aligned}
\end{aligned}
\right.
\end{equation}
coupled with the following boundary conditions
(see Fig. \ref{fig:test2problem} left)
\begin{equation}
\label{eq:bou test}
\left\{
\begin{aligned}
&
\begin{aligned}
\uu &= (0.5\, y(2-y), \, 0)^{\rm T}
  & &\text{on $\Gamma_{\rm in}$,} \\
\uu &= (4\, (y-1)(2-y), \, 0)^{\rm T}
  & &\text{on $\Gamma_{\rm out}$,} \\
\uu &= \boldsymbol{0}
  & &\text{on $\partial \Omega \setminus( \Gamma_{\rm in} \cup \Gamma_{\rm out})$,}
\end{aligned}
\end{aligned}
\right.
\quad
\left\{
\begin{aligned}
&
\begin{aligned}
\theta &= 1
  & &\text{on $\Gamma_{\rm in}$,} \\
\kappa \nabla \theta \cdot \nn &= 0
  & &\text{on $\partial \Omega \setminus \Gamma_{\rm in}$,}
\end{aligned}
\end{aligned}
\right.
\end{equation}
the viscosity is $\nu = \texttt{1e-2}$, the conductivity is $\kappa = \texttt{1e-6}$.
\begin{figure}
\centering
\qquad
\begin{overpic}[scale=0.225]{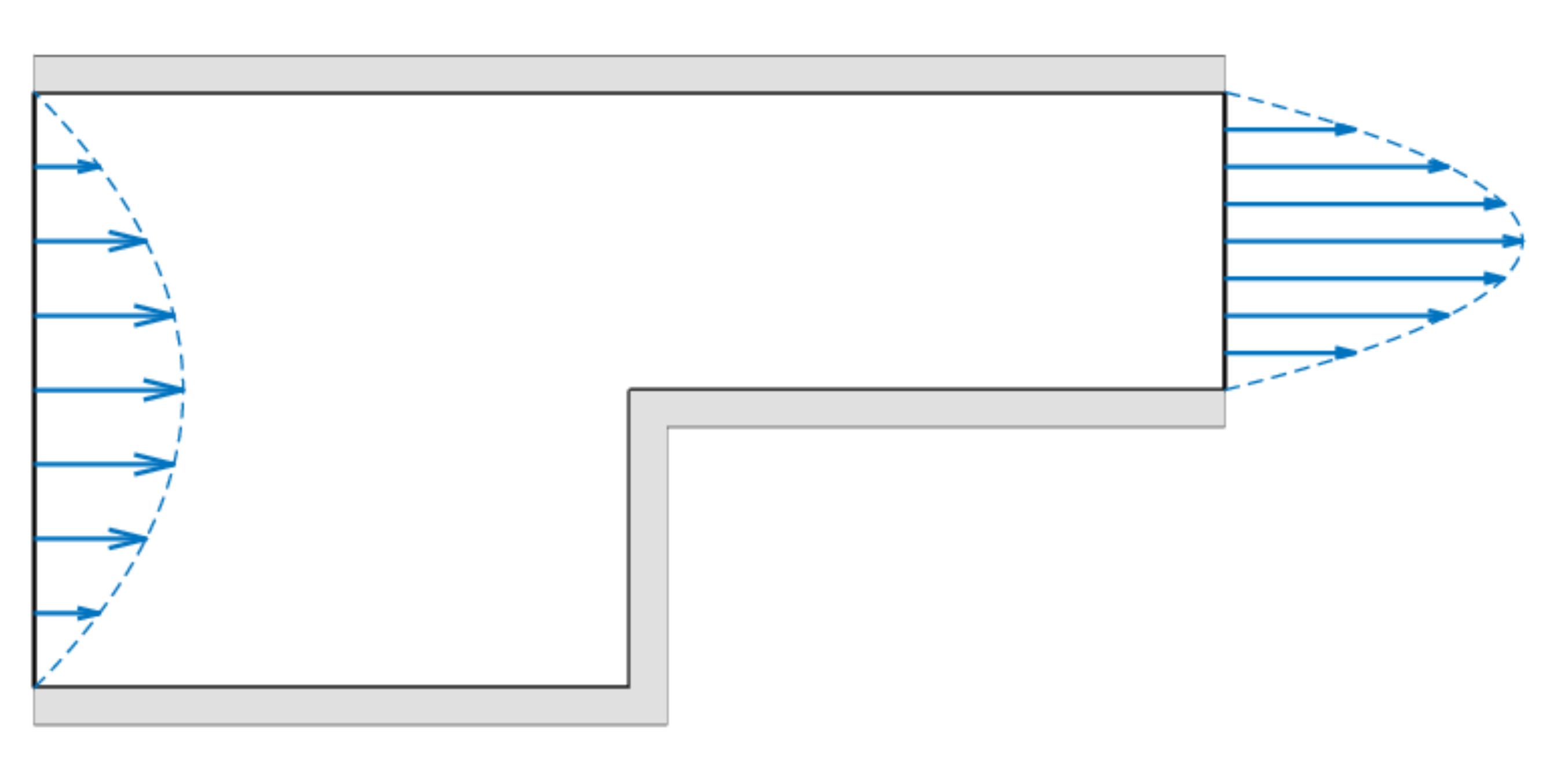}
\put(-7,25){{$\Gamma_{\rm in}$}}
\put(67,32){{$\Gamma_{\rm out}$}}
\end{overpic}
\quad
\begin{overpic}[scale=0.225]{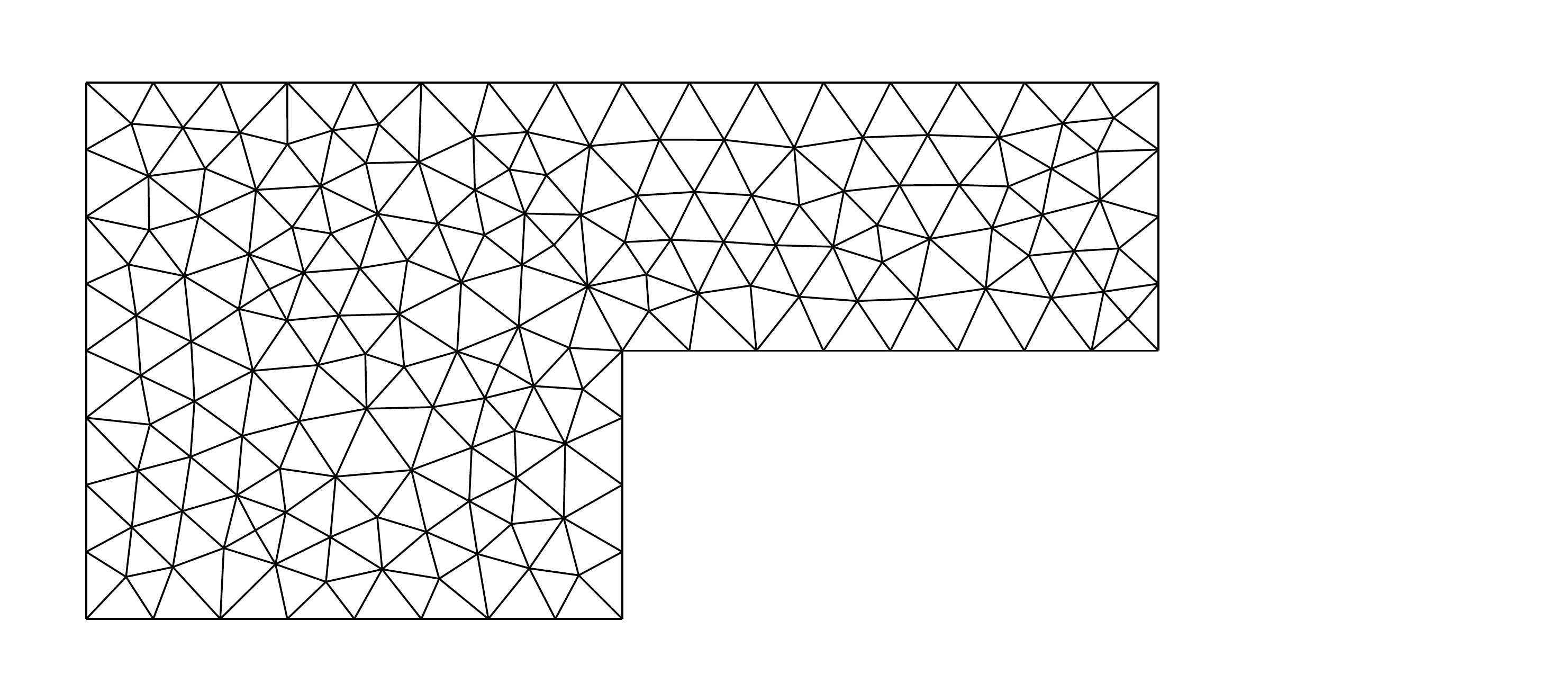}
\end{overpic}
\vspace{0.5cm}
\caption{Test 2. Domain and boundary conditions description (left) and mesh with diameter $h=2^{-2}$ (right).}
\label{fig:test2problem}
\end{figure}
The exact temperature solution is $\theta_{\rm ex} = 1$.

For the sake of the comparison with the results reported in \cite[Example 6.6]{john-linke-merdon-neilan-rebholz:2017}, the domain $\Omega$ is partitioned with a sequence of triangular meshes with diameter $h =2^{-1}$, $2^{-2}$, $2^{-3}$, $2^{-4}$, $2^{-5}$ (see Fig. \ref{fig:test2problem} right).
The continuous convective form associated with the spaces satisfying the boundary conditions \eqref{eq:bou test} is not skew-symmetric, therefore in the discrete scheme we consider the convective form \eqref{eq:forma cshl} in the place of \eqref{eq:forma cshlskew}. 
Note that the heat equation  for the temperature in \eqref{eq:pb test} is convection-dominated. However,  it is worth stressing that, differently form \cite{john-linke-merdon-neilan-rebholz:2017}, we do not employ any stabilization technique to stabilize the discrete problem (we refer to \cite{berrone-supg,BdV-supg,BdV-oseen} for the analysis of the stabilized method in the VEM context both for the convection-dominated elliptic equations and Oseen equations).
Fig. \ref{fig:quiver}, Fig. \ref{fig:test2res} and Table \ref{tab:test2} present the results of the numerical simulations.
As exhibited in \cite[Example 6.6]{john-linke-merdon-neilan-rebholz:2017} the violation of the divergence constraint, as it appears in the classical mixed finite element, pollutes the discrete temperature solution with strong spurious oscillations.
By contrast, the proposed VEM scheme with divergence-free velocity solution computes the temperature with high accuracy as depicted in Fig. \ref{fig:test2res} and Table \ref{tab:test2}.

\begin{figure}
\centering
\begin{overpic}[scale=0.27]{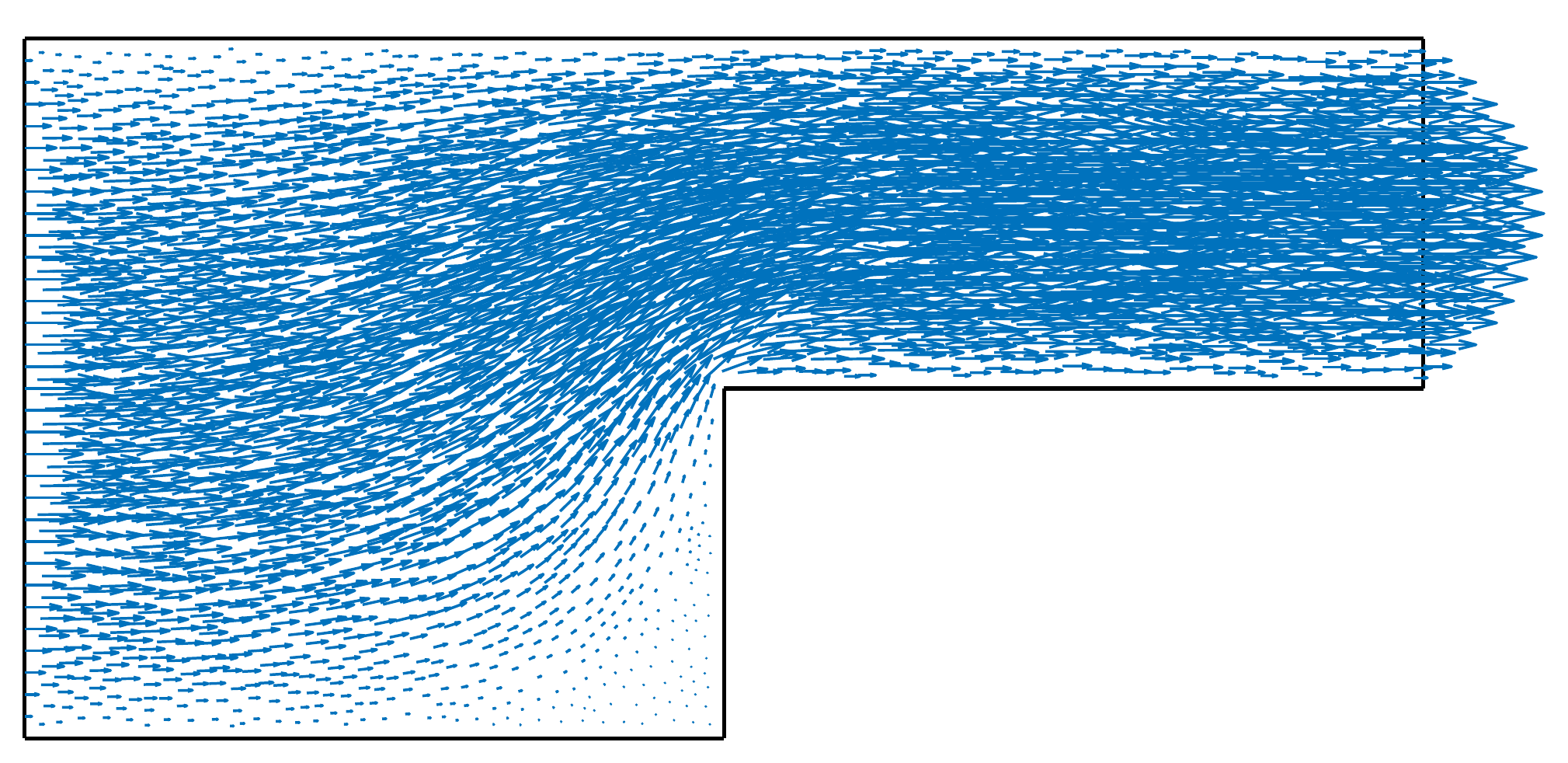}
\end{overpic}
\vspace{0.5cm}
\caption{Test 2. Quiver of the numerical velocity $\uu_h$ with $h=2^{-4}$.}
\label{fig:quiver}
\end{figure}

\begin{figure}
\centering
\begin{overpic}[scale=0.25]{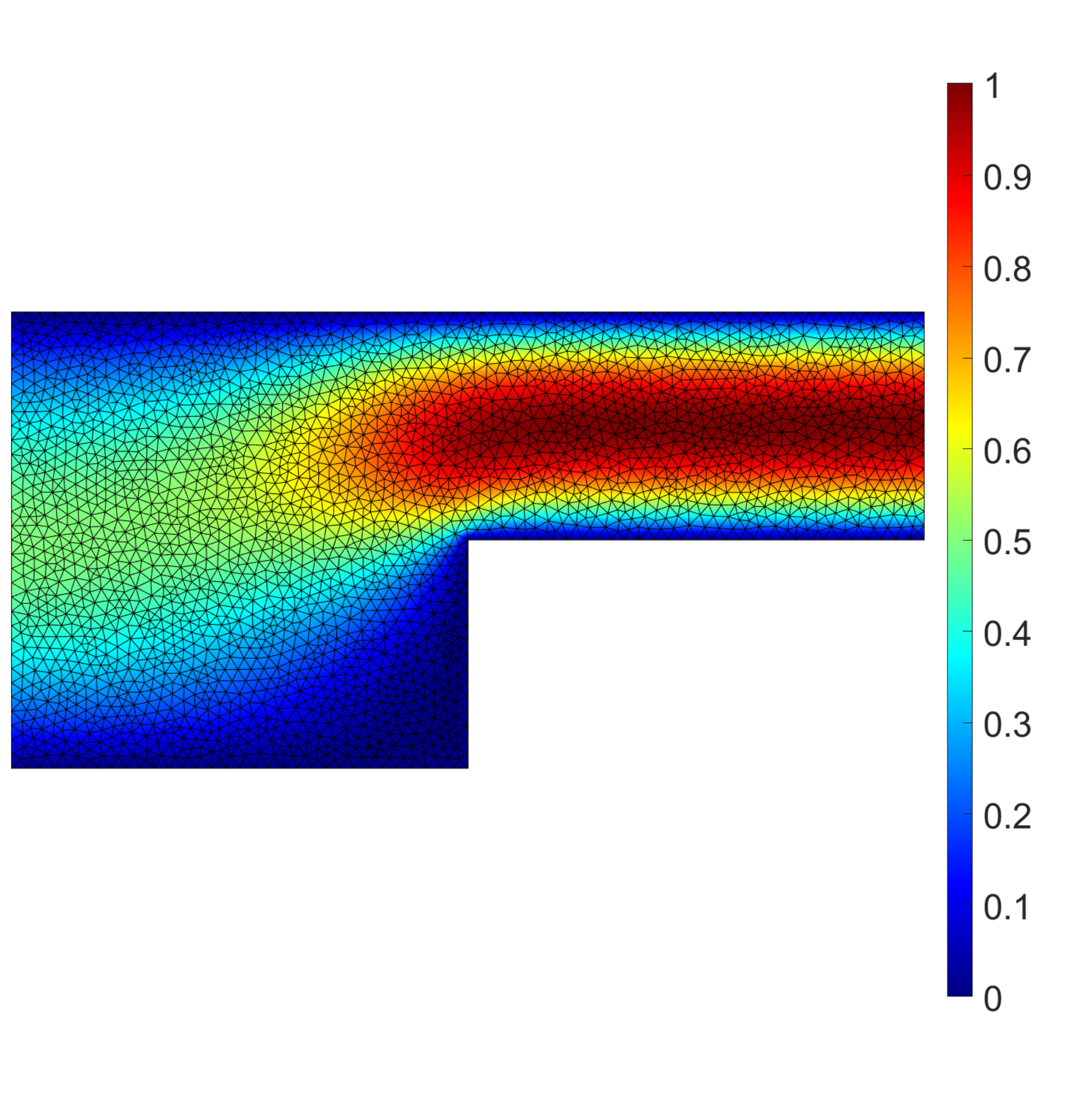}
\end{overpic}
\qquad
\begin{overpic}[scale=0.25]{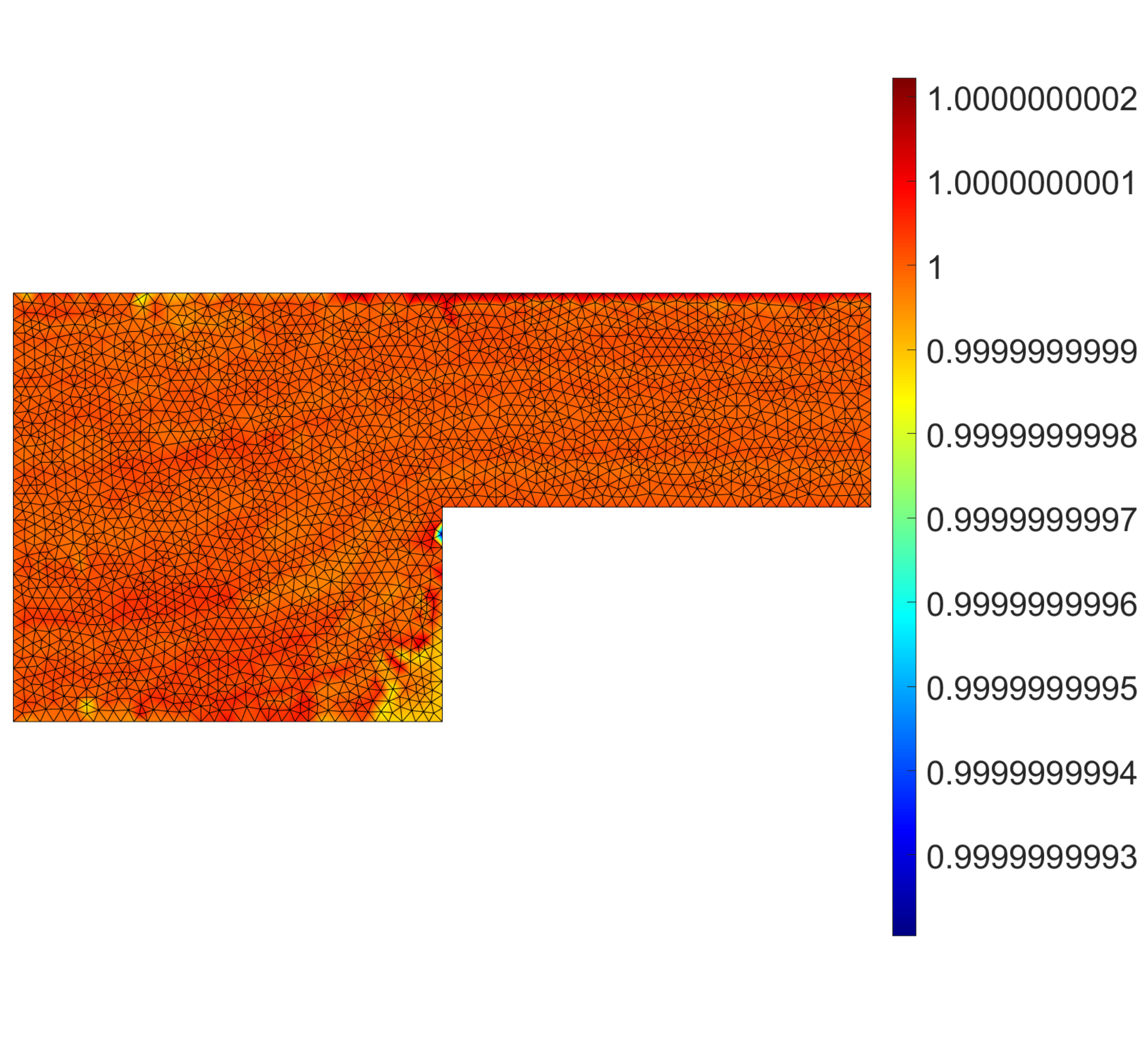}
\end{overpic}
\vspace{-0.25cm}
\caption{Test 2. Absolute value of the velocity $\vert \uu_h\vert$ (left) and temperature $\theta_h$ (right) with $h=2^{-4}$.}
\label{fig:test2res}
\end{figure}

\begin{table}[!h]
\centering
\begin{tabular}{cccccc}
\toprule
$\texttt{1/h}$  & 
$\texttt{2}$    &
$\texttt{4}$    &  $\texttt{8}$ &
$\texttt{16}$   &  $\texttt{32}$ 
\\
\midrule
$\texttt{min}(\theta_h-1)$ &
$\texttt{-7.88e-11}$  &  
$\texttt{-2.78e-10}$ &
$\texttt{-2.87e-08}$  &  
$\texttt{-4.47e-09}$ &   
$\texttt{-3.23e-09}$
\\    
\midrule
$\texttt{max}(\theta_h-1)$ &
$\texttt{+4.50e-11}$  &  
$\texttt{+7.51e-11}$ &
$\texttt{+2.42e-08}$  &  
$\texttt{+2.90e-09}$ &   
$\texttt{+5.34e-09}$
\\    
\bottomrule
\end{tabular}
\vspace{0.25cm}
\caption{Test 2. Minimal and maximal of $\theta - 1$ (\cfun{DoFs} values).}
\label{tab:test2}
\end{table}

\section{Conclusions}
\label{sec:conclusions} 
We presented a Virtual Element discretization of a thermo-fluid dynamic problem modeling the stationary flow of a viscous incompressible fluid (governed by a Navier-Stokes equation),  where the viscosity of the fluid depends on the temperature (governed by an elliptic equation).
We have show the the discrete problem is well-posed and prove optimal error estimates. Numerical experiments which confirm the theoretical error bounds are also presented. Further developments may include the introduction of uncertainty in the thermal conductivity $\kappa$, based on the ideas of \cite{Bonizzoni_Nobile_2014,Bonizzoni_Nobile_2020} as well as the three-dimensional extension, based on employing the three-dimensional VEM  for Navier-Stokes proposed in \cite{Beirao_Dassi_Vacca:2020}.


\section*{Funding}
P.F.A. and M.V. have been partially funded by the research grants PRIN2017 n. \verb+201744KLJL+\emph{``Virtual Element Methods: Analysis and Applications''} and  PRIN2020 n. \verb+20204LN5N5+\emph{``Advanced polyhedral discretisations of heterogeneous PDEs for multiphysics problems''}, funded by the Italian Ministry of Universities and Research (MUR).
P.F.A, G.V. and M.V. are members of INdAM-GNCS.
\bibliographystyle{plain}
\bibliography{references}

\end{document}